\author{A.A. Vasil'eva}
\title{Widths of weighted Sobolev classes on a John domain\footnote{ The research was
supported by RFBR under grant no. 10-01-00442}}
\date{}
\begin{document}

\maketitle

\newenvironment{Biblio}{%
                  \renewcommand{\refname}{\footnotesize REFERENCES}%
                  }

\renewcommand{\le}{\leqslant}
\renewcommand{\ge}{\geqslant}
\newcommand{\sgn}{\mathrm {sgn}\,}
\newcommand{\inter}{\mathrm {int}\,}
\newcommand{\dist}{\mathrm {dist}}
\newcommand{\R}{\mathbb{R}}
\renewcommand{\C}{\mathbb{C}}
\newcommand{\Z}{\mathbb{Z}}
\newcommand{\N}{\mathbb{N}}
\newcommand{\Q}{\mathbb{Q}}
\theoremstyle{plain}
\newtheorem{Trm}{Theoerm}
\newtheorem{trma}{Theorem}
\newtheorem{Def}{Definition}
\newtheorem{Cor}{Corollary}
\newtheorem{Lem}{Lemma}
\newtheorem{Rem}{Remark}
\newtheorem{Sta}{Proposition}
\renewcommand{\proofname}{\bf Proof}
\renewcommand{\thetrma}{\Alph{trma}}
\section{Introduction}

Denote by $\N$, $\Z$, $\Z_+$, $\R$, $\R_+$ the sets of natural,
integer, nonnegative integer, real and nonnegative real numbers,
respectively.

Let $\Omega \subset \R^d$ be a bounded domain (a domain is an open
connected set), and let $g$, $v:\Omega\rightarrow \R_+$ be
measurable functions. For each measurable vector-valued function
$\varphi:\ \Omega\rightarrow \R^m$, $\varphi=(\varphi_k) _{1\le
k\le m}$, and for each $p\in [1, \, \infty]$ put
$$
\|\varphi\|_{L_p(\Omega)}= \Big\|\max _{1\le k\le m}|\varphi _k |
\Big\|_p.
$$
Let $\overline{\beta}=(\beta _1, \, \dots, \, \beta _d)\in
\Z_+^d:=(\N\cup\{0\})^d$, $|\overline{\beta}| =\beta _1+
\ldots+\beta _d$. For any distribution $f$ defined on $\Omega$ we
write $\displaystyle \nabla ^r\!f=\left(\partial^{r}\! f/\partial
x^{\overline{\beta}}\right)_{|\overline{\beta}| =r}$ (here partial
derivatives are taken in the sense of distributions) and denote by
$m_r$ the number of components of the vector-valued distribution
$\nabla ^r\!f$. We also write
$$
W^r_{p,g}(\Omega)=\left\{f:\ \Omega\rightarrow \R\big| \; \exists
\varphi :\ \Omega\rightarrow \R^{m_r}\!:\ \| \varphi \|
_{L_p(\Omega)}\le 1, \, \nabla ^r\! f=g\cdot \varphi\right\}
$$
\Big(we denote the corresponding function $\varphi$ by
$\displaystyle\frac{\nabla ^r\!f}{g}$\Big),
$$
\| f\|_{L_{q,v}(\Omega)}{=}\| f\|_{q,v}{=}\|
fv\|_{L_q(\Omega)},\qquad L_{q,v}(\Omega)=\left\{f:\Omega
\rightarrow \R| \; \ \| f\| _{q,v}<\infty\right\}.
$$

For $x\in \R^d$ and $\rho>0$ we shall denote by $B_\rho(x)$ a
closed euclidean ball of radius $\rho$ in $\R^d$
centered at the point $x$.
\begin{Def}
\label{fca} Let $\Omega\subset\R^d$ be a bounded domain, and let
$a>0$. We say that $\Omega \in {\bf FC}(a)$ if there exists a
point $x_*\in \Omega$ such that for any $x\in \Omega$ there exists
a curve $\gamma _x:[0, \, T(x)] \rightarrow\Omega$ with the
following properties:
\begin{enumerate}
\item $\gamma _x\in AC[0, \, T(x)]$, $|\dot \gamma _x|=1$ a.e.,
\item $\gamma _x(0)=x$, $\gamma _x(T(x))=x_*$,
\item $B_{at}(\gamma _x(t))\subset \Omega$ holds for any
$t\in [0, \, T(x)]$.
\end{enumerate}
\end{Def}

\begin{Def}
We say that $\Omega$ satisfies the John condition (and call
$\Omega$ a John domain) if $\Omega\in {\bf FC}(a)$ for some $a>0$.
\end{Def}
For a bounded domain the John condition is equivalent to the
flexible cone condition (see the definition in \cite{besov_il1}).

Reshetnyak in the papers \cite{resh1, resh2} constructed the integral
representation for functions defined on a John domain $\Omega$
in terms of their derivatives of order $r$. This integral representation
together with the Adams theorem on potentials
\cite{adams, adams1} yield that in the case $\frac rd-\left(\frac
1p-\frac 1q\right)_+>0$ the class $W^r_p(\Omega)$ is compactly embedded in
the space $L_q(\Omega)$ (i.e. the conditions of the compact embedding
are the same as for $\Omega=[0, \, 1]^d$). In this article it will be shown that
for a John domain such characteristics of embeddings of $W^r_p(\Omega)$ into $L_q(\Omega)$
as Kolmogorov and linear widths have the same order values as for $\Omega=[0, \, 1]^d$.

For properties of weighted Sobolev spaces and their generalizations,
see the books \cite{triebel, kufner, turesson, edm_trieb_book,
triebel1, edm_ev_book} and the survey paper \cite{kudr_nik}.
Sufficient conditions of boundedness and compactness for embeddings
of weighted Sobolev spaces into weighted $L_q$-spaces were obtained
by Kudryavtsev \cite{kudrjavcev}, Kufner
\cite{kufner}, Triebel \cite{triebel},
Lizorkin and Otelbaev \cite{liz_otel},
Gurka and Opic \cite{gur_opic}, Besov
\cite{besov1, besov2, besov3, besov4}, Antoci
\cite{antoci}, Gol’dshtein and Ukhlov
\cite{gold_ukhl}, and other authors.

Let $(X, \, \|\cdot\|_X)$ be a linear normed space, let
$n\in \Z_+$, ${\cal L}_n(X)$ be a the family of subspaces of $X$
whose dimension does not exceed $n$. Denote by $L(X, \,
Y)$ the space of continuous linear operators from $X$ into a normed space $Y$,
by ${\rm rk}\, A$ the dimension of the image of the operator
$A:X\rightarrow Y$, and by $\| A\| _{X\rightarrow Y}$
its norm. By the Kolmogorov $n$-width of a set
$M\subset X$ in the space $X$ we mean the quantity
$$d_n(M, \, X)=\inf _{L\in {\cal L}_n(X)} \sup_{x\in M}\inf_{y\in
L}\|x-y\|_X,$$ and by the linear $n$-width the quantity
$$
\lambda_n(M, \, X) =\inf_{A\in L(X, \, X), \, {\rm rk} A\le n}\sup
_{x\in M}\| x-Ax\| _X.
$$

The approximation numbers of an operator $A\in L(X, \, Y)$ are defined by
$$
{\cal A}_n(A)=\inf \{\| A-A_n\| _{X\rightarrow Y}:{\rm rk}\,
A_n\le n\}.
$$
If  $A$ is an embedding operator of a space $X$ in a space $Y$ and
if $M\subset X$ is a unit ball, then we write ${\cal A}_n(A)=
{\cal A}_n(M, \, Y)$. If the operator $A$ is compact, then from
Heinrich's result \cite{heinr} follows that
\begin{align}
\label{aneqln} {\cal A}_n(M, \, Y)=\lambda _n(A(M), \, Y).
\end{align}

Let $X$, $Y$ be sets, $f_1$, $f_2:\ X\times Y\rightarrow
\R_+$. We write $f_1(x, \, y)\underset{y}{\lesssim} f_2(x, \, y)$
(or $f_2(x, \, y)\underset{y}{\gtrsim} f_1(x, \, y)$) if for any
$y\in Y$ there exists $c(y)>0$ such that $f_1(x, \, y)\le
c(y)f_2(x, \, y)$ for each $x\in X$; $f_1(x, \,
y)\underset{y}{\asymp} f_2(x, \, y)$ if $f_1(x, \, y)
\underset{y}{\lesssim} f_2(x, \, y)$ and $f_2(x, \,
y)\underset{y}{\lesssim} f_1(x, \, y)$.

In the 1960–1970s authors investigated problems concerning the values
of the widths of function classes in $L_q$ (see \cite{bibl6,
tikh_babaj, busl_tikh, bib_ismag, bib_kashin, bib_majorov,
bib_makovoz, bibl9, bibl10, bibl11, bibl12, bibl13, kashin1,
kulanin} and also \cite{tikh_nvtp}, \cite{itogi_nt},
\cite{kniga_pinkusa}) and of finite-dimensional balls $B_p^n$ in
$l_q^n$. Here $l_q^n$ $(1\le q\le \infty)$ is the space
$\R^n$ with the norm $$\|(x_1, \, \dots , \, x_n)\| _q\equiv\|(x_1, \,
\dots , \, x_n)\| _{l_q^n}= \left\{
\begin{array}{l}(| x_1 | ^q+\dots+ | x_n | ^q)^{1/q},\text{ if
}q<\infty ,\\ \max \{| x_1 | , \, \dots, \, | x_n |\},\text{ if
}q=\infty ,\end{array}\right .$$ $B_p^n$ is the unit ball in
$l_p^n$. For $p\ge q$, Pietsch \cite{pietsch1} and Stesin
\cite{stesin} found the precise values of $d_n(B_p^\nu, \, l_q^\nu)$
and $\lambda_n(B_p^\nu, \, l_q^\nu)$. In the case of $p<q$,
Kashin \cite{bib_kashin}, Gluskin \cite{bib_gluskin} and
Garnaev, Gluskin \cite{garn_glus} determined order values
of the widths of finite-dimensional balls up to quantities
depending on $p$ and $q$ only.

Order estimates for widths of non-weighted Sobolev classes on a
segment were obtained by Tikhomirov, Ismagilov, Makovoz and Kashin
\cite{bibl6, tikh_babaj, bib_ismag, bib_kashin, bib_makovoz}. In
the case of multidimensional cube the upper estimate of widths
(which is not always precise) was first obtained by Birman and
Solomyak \cite{birm}. After publication of Kashin's result in
\cite{bib_kashin} estimates for widths of Sobolev classes on a
multidimensional torus and their generalizations were found by
Temlyakov and Galeev \cite{bibl9, bibl10, bibl11, bibl12, bibl13}.
In papers of Kashin \cite{kashin1} (for $d=1$) and Kulanin
\cite{kulanin} (for $d>1$) estimates of widths were found in the
case of ``small-order smoothness''. Here the upper estimate was
not precise in the case $d>1$ (with a logarithmic factor). The
correct estimate follows from embedding theorems between Sobolev
and Besov spaces and from the estimate of widths for embeddings of
Besov classes (see, e.g., \cite{vybiral}). Let us formulate the
final result.

Let $r\in \N$, $1\le p, \, q\le \infty$. Denote $\eta _{pq}=\frac
12 \cdot \frac{\frac 1p -\frac 1q}{\frac 12 -\frac 1q}$,
$\varkappa =\left(\frac rd +\frac 1q-\frac 1p\right)^{-1}$.
\begin{trma}
Denote
$$
\theta _{p,q,r,d}=\left\{ \begin{array}{l} \frac rd , \ \ \text{if
}\ p\ge q \, \text{ or }\,
(2\le p<q\le \infty , \; \frac rd \ge \eta _{pq}), \\
\frac rd +\frac 1q-\frac 1p,\ \ \text{ if }\ 1\le p<q\le 2, \\
\frac rd + \frac 12- \frac 1p, \ \ \text{ if }\ 1<p<2<q\le \infty
\,\text{ and }\,\frac rd \ge\frac 1p, \\
\frac q2 \left(\frac rd +\frac 1q- \frac 1p\right),\ \text{ if }\
(p<2<q , \; \frac rd <\frac 1p)\, \text{ or }\, (2\le p<q, \ \frac
rd <\eta _{pq}),
\end{array}\right.
$$
$\tilde \theta_{p,q,r,d}=\theta_{p,q,r,d}$ for $\frac 1p+\frac
1q\ge 1$, $\tilde \theta_{p,q,r,d}=\theta_{q',p',r,d}$ for $\frac
1p+\frac 1q<1$. Let $\displaystyle \frac rd +\frac 1q-\frac 1p>0$.
Suppose that $\frac rd\ne \frac 1p$ holds in the case $1\le
p<2<q\le +\infty$, and $\frac rd \ne \eta_{pq}$ holds in the case
$2\le p<q\le +\infty$. Then
$$
d_n(W^r_p[0, \, 1]^d, \, L_q[0, \,
1]^d)\underset{r,d,p,q}{\asymp}n^{-\theta_{p,q,r,d}}.
$$
Suppose that $\frac rd\ne \max\left\{\frac{1}{p}, \,
\frac{1}{q'}\right\}$ holds in the case $1\le p<2<q\le +\infty$. Then
$$
\lambda_n(W^r_p[0, \, 1]^d, \, L_q[0, \,
1]^d)\underset{r,d,p,q}{\asymp}n^{-\tilde\theta_{p,q,r,d}}.
$$
\end{trma}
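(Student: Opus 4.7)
The plan is to reduce the problem to a discrete one, namely to estimates of widths of finite-dimensional balls $B_p^N$ in $l_q^N$, which are known by the classical theorems of Pietsch, Stesin, Kashin, Gluskin and Garnaev-Gluskin. The reduction would proceed via a Birman-Solomyak-type dyadic decomposition of the cube $[0,1]^d$: at level $j$ partition the cube into $2^{jd}$ congruent sub-cubes, and on each sub-cube replace a function $f\in W^r_p$ by its best polynomial approximation of degree $<r$. A telescoping argument together with the local Poincar\'e-Sobolev inequality shows that the $L_q$-error at level $j$ is controlled by $2^{-jr}2^{jd(1/p-1/q)_+}$ times an $\ell_p$-norm of the sub-cube seminorms, so the map from $W^r_p$ to the sequence of level-$j$ blocks behaves like a weighted $l_p\to l_q$ embedding with $N_j\asymp 2^{jd}$ coordinates per level.

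For the \emph{upper bound}, I would fix a budget $n$ and distribute it among the levels $j=0,1,2,\ldots$, allocating $n_j$ degrees of freedom to level $j$. On each level one uses the optimal finite-dimensional width $d_{n_j}(B_p^{N_j},l_q^{N_j})$ or $\lambda_{n_j}(B_p^{N_j},l_q^{N_j})$ from Kashin/Gluskin/Garnaev-Gluskin. The distinction between the four regimes in the definition of $\theta_{p,q,r,d}$ arises from the trade-off between: (i) trivial approximation by zero on high levels (which costs $2^{-jr}2^{jd(1/p-1/q)_+}$), (ii) exact representation by polynomials on low levels (costing $n_j=N_j$), and (iii) the nonlinear Kashin bound on intermediate levels. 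The critical level $j_*$ where one passes from exact representation to Kashin approximation is determined by solving $2^{j_*d}\asymp n$ or $2^{j_*d}\asymp n^{\varkappa(\cdots)}$ depending on the regime; for $p<q\le 2$ the Kashin phenomenon $d_n(B_p^N,l_q^N)\asymp n^{1/q-1/p}$ operates uniformly, while for $2\le p<q$ with small $r/d$ one exploits the sharper bound $d_n(B_p^N,l_q^N)\asymp N^{1/q-1/p}(n/(1+\log(N/n)))^{-1/2}$, which produces the exponent $\eta_{pq}$. The linear-width bound differs only in the regime $1<p<2<q$, where the Gelfand-width/linear duality forces one to use $\tilde\theta=\theta_{q',p',r,d}$.

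For the \emph{lower bound}, I would select a single scale $j_*$ chosen to match the regime, take smooth bump functions $\psi_Q$ supported in the sub-cubes $Q$ at that scale with disjoint supports, and embed a scaled ball $c(n)\cdot B_p^{N_{j_*}}$ into $W^r_p[0,1]^d$ via $(a_Q)\mapsto \sum_Q a_Q\psi_Q$. A straightforward scaling of $\|\nabla^r\psi_Q\|_p$ and $\|\psi_Q\|_q$ identifies the right constant $c(n)$, and then $d_n(W^r_p,L_q)\gtrsim c(n)\cdot d_n(B_p^{N_{j_*}},l_q^{N_{j_*}})$. Choosing $j_*$ so that $N_{j_*}\asymp n$, or $N_{j_*}\asymp n\log n$ in the Kashin regime, and inserting the matching lower bound for finite-dimensional balls reproduces the four cases of $\theta_{p,q,r,d}$; for linear widths one either uses the same embedding or passes to the dual via \eqref{aneqln} in the regime $1<p<2<q$.

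The main obstacle is the bookkeeping across the regimes: one must verify that the optimal level allocation $(n_j)$ actually produces the claimed exponent in each of the four cases, and that the Kashin-type bounds are applied in exactly the range of $j$ where they beat both the trivial and the exact approximation. The borderline cases $r/d=1/p$ and $r/d=\eta_{pq}$ are genuinely excluded because there the geometric series in $j$ diverges logarithmically, and one would need Besov-space interpolation (as in Vybiral) to handle them; this is why the hypothesis forbids these exact values.
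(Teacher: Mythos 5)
The paper does not prove Theorem A itself; it records it as a summary of known results (Birman--Solomyak, Temlyakov, Galeev, Kashin, Kulanin, Vybiral), and the method those references use is indeed the dyadic Birman--Solomyak decomposition plus finite-dimensional ball widths that you outline. For the first three regimes your level-allocation plan reproduces the exponents correctly, and the duality reduction for $\lambda_n$ when $\tfrac1p+\tfrac1q<1$ is the right mechanism.

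There is, however, a genuine gap in the fourth (``small-order smoothness'') regime. The paper explicitly warns that the direct method loses a logarithm there when $d>1$: it states that Kashin (for $d=1$) and Kulanin (for $d>1$) obtained estimates in the small-smoothness case, but that Kulanin's upper estimate ``was not precise in the case $d>1$ (with a logarithmic factor)'', and that ``the correct estimate follows from embedding theorems between Sobolev and Besov spaces and from the estimate of widths for embeddings of Besov classes'' (citing Vybiral). Your proposal applies the Garnaev--Gluskin bound $d_n(B_p^N,l_q^N)\asymp N^{1/q-1/p}\bigl(n/(1+\log(N/n))\bigr)^{-1/2}$ level by level, but when these are summed over dyadic levels the factors $\log(N_j/n_j)$ do not telescope, and for $d>1$ a parasitic power of $\log n$ survives in the upper bound. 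Removing it requires a structurally different step: embed $W^r_p[0,1]^d$ into the appropriate Besov scale and use the sharp width estimates for Besov embeddings. You invoke Besov-space interpolation only to explain why the borderline exponents $\tfrac rd=\tfrac1p$ and $\tfrac rd=\eta_{pq}$ are excluded, but that misidentifies its role: those borderlines are simply outside the statement, whereas the Besov detour is essential to get the clean power of $n$ throughout the entire fourth regime, even away from the borderline.
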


Let us formulate the main result of this paper. Denote by
$L_+(\Omega)$ the class of functions $w:\Omega \rightarrow \R_+$
such that there exists a sequence of functions $w_n:\Omega
\rightarrow \R_+$, $n\in \N$, with the following properties:
\begin{itemize}
\item $0 \le w_n(x) \le w(x)$ for any $x \in \Omega$;
\item there exists a finite family of non-overlapping cubes $K_{n,i}\subset
\Omega$, $1\le i\le N_n$, such that $w_n|_{K_{n,i}}= {\rm const}$,
$w_n(x)=0$ for $x\in \Omega\backslash \cup _{i=1}^{N_n}K_{n,i}$;
\item $w_n(x)\underset{n\rightarrow \infty}{\rightarrow} w(x)$
a.e. on $\Omega$.
\end{itemize}
For sets $A$, $B\subset \R^d$ and for a point $x\in \R^d$ we write
$|x|=\|x\|_{l_2^d}$, $\dist \, (x, \, A)=\inf _{y\in A}|x-y|$,
$\dist \, (A, \, B)=\inf _{y\in A, \, z\in B}|y-z|$.

\begin{Trm}
\label{main} Let $\Omega \subset \R^d$ be a bounded domain such
that $\Omega \in {\bf FC}(a)$, let $r\in \N$, $1<p\le \infty$,
$1\le q<\infty$, and let $\frac rd +\frac 1q-\frac 1p >0$. Let
$\Gamma'$, $\Gamma''\subset \partial \Omega$ be closed sets, let
$g(x)=g_0(x)\tilde g(x)$, $v(x)=v_0(x)\tilde v(x)$, $x\in \Omega$,
$g_0\in L_\alpha(\Omega , \, \R_+)$, $v_0\in L_\beta(\Omega , \,
\R_+)$, $1<\alpha, \, \beta \le \infty$, $\beta>q$, $\frac
1p+\frac{1}{\alpha}<1$, $\frac{1}{\tilde \varkappa}:=\frac
rd+\frac 1q-\frac{1}{\beta}-\frac 1p-\frac{1}{\alpha}\ge 0$;
suppose that if $\frac{1}{\tilde \varkappa}=0$, then $\tilde
g=\tilde v=1$, and if $\frac{1}{\tilde \varkappa}>0$, then $\tilde
g\tilde v\in L_{\tilde \varkappa}(\Omega)$,
\begin{align}
\label{tg_dg} \tilde g(x)=\varphi_{\tilde g}\left({\rm dist}\, (x,
\, \Gamma')\right), \;\; \tilde v(x) =\varphi_{\tilde v}\left({\rm
dist}\, (x, \, \Gamma'')\right);
\end{align}
here the function $\varphi_{\tilde g}:(0, \, +\infty)\rightarrow
\R_+$ decreases, the function $\varphi_{\tilde v}:(0, \,
+\infty)\rightarrow \R_+$ increases, and there exists a number
$c_0\ge 1$ such that for any $m\in \Z$, $t$, $s\in [2^{m-1}, \,
2^{m+1}]$
\begin{align}
\label{delta2}
c_0^{-1}\le \frac{\varphi_{\tilde g}(t)}{\varphi_{\tilde g}(s)}\le c_0, \;\;\;
c_0^{-1}\le \frac{\varphi_{\tilde v}(t)}{\varphi_{\tilde v}(s)}\le c_0.
\end{align}
\begin{enumerate}
\item Suppose that $\frac rd\ne
\frac 1p$ holds for $1<p<2<q<+\infty$, and $\frac rd \ne
\eta_{pq}$ holds for $2\le p<q<+\infty$. Then
$$
\varlimsup _{n\rightarrow \infty} n^{\theta_{p,q,r,d}}
d_n(W^r_{p,g}(\Omega), \, L_{q,v}(\Omega))\underset{r,d,q,p,a,
\alpha , \beta,c_0}{\lesssim} \| gv\| _{\varkappa}.
$$
\item Suppose that $\frac rd\ne \max\left\{\frac{1}{p},
\, \frac{1}{q'}\right\}$ holds for $1<p<2<q<+\infty$. Then
$$
\varlimsup _{n\rightarrow \infty} n^{\tilde\theta_{p,q,r,d}} {\cal
A}_n(W^r_{p,g}(\Omega), \, L_{q,v}
(\Omega))\stackrel{(\ref{aneqln})}{=}
$$
$$
=\varlimsup _{n\rightarrow
\infty} n^{\tilde\theta_{p,q,r,d}}
\lambda_n(W^r_{p,g}(\Omega), \,L_{q,v} (\Omega))
\underset{r,d,q,p,a,\alpha ,\beta,c_0}{\lesssim}
\| gv\| _{\varkappa}.
$$
\end{enumerate}
If $g$, $v\in L_+(\Omega)$, then
$$
\varliminf _{n\rightarrow
\infty} n^{\theta_{p,q,r,d}} d_n(W^r_{p,g}(\Omega), \, L_{q,v}
(\Omega))\underset{r,d,q,p}{\gtrsim} \| gv\| _{\varkappa},
$$
$$
\varliminf _{n\rightarrow \infty} n^{\tilde\theta_{p,q,r,d}} {\cal
A}_n(W^r_{p,g}(\Omega), \, L_{q,v}
(\Omega))\stackrel{(\ref{aneqln})}{=}\varliminf _{n\rightarrow
\infty} n^{\tilde\theta_{p,q,r,d}} \lambda_n(W^r_{p,g}(\Omega), \,
L_{q,v} (\Omega))\underset{r,d,q,p}{\gtrsim} \| gv\| _{\varkappa}.
$$
\end{Trm}

For common domains and $r=1$, $p=q$ Evans, Edmunds and Harris
\cite{edm_ev_06, har_ev93} obtained a sufficient condition
under which the approximation numbers have the same orders
as for a cube. In addition, note the results of Evans, Harris, Lang and Solomyak
\cite{e_h_l, solomyak} on approximation numbers of weighted Sobolev classes on
a metric graph for $r=1$, $p=q$. Also the author knows the recent Besov's result
on coincidence of orders of widths
$$
d_n(W^r_p(K_\sigma), \, L_q(K_\sigma))\underset{p,q,r,d,\sigma}
{\asymp} d_n(W^r_p([0, \, 1]^d), \, L_q([0, \, 1]^d)),
$$
where $$K_\sigma=\{(x_1, \, \dots, \, x_{d-1}, \, x_d): \; |(x_1, \,
\dots, \, x_{d-1})|^{1/\sigma}<x_d<1\},$$ $\sigma>1$,
$r-[\sigma(d-1)+1]\left(\frac 1p-\frac 1q\right)_+>0$.

\section{Notations}
We denote by $\overline{A}$, or ${\rm int}\, A$, or ${\rm
mes}\,A$, or ${\rm card}\,A$ the closure of the set $A$, or its
interior, or its Lebesgue measure or its cardinality,
respectively. If the set $A$ is contained in some subspace
$L\subset \R^d$ of dimension $(d-1)$, then we denote by ${\rm
int}_{d-1}A$ the interior of the set $A$ with respect to the
induced topology of the space $L$. We say that the sets $A$,
$B\subset \R ^d$ do not overlap if $A\cap B$ has the Lebesgue
measure zero. For a convex set $A$ we denote by $\dim A$ the
dimension of the affine span of the set $A$.

Let $\gamma$ be a rectifiable curve in $\R^d$. We shall denote by $|\gamma |$
its length.

Let ${\cal K}$ be a family of closed cubes in $\R^d$ with axes
parallel to coordinate axes. For a cube $K \in {\cal K}$ and for
$s\in \Z_+$ we denote by $\Xi _s(K)$ the set of $2^{sd}$ closed
non-overlapping cubes of the same size that form a partition of $K$,
and write $\Xi(K):=\bigcup_{s\in \Z_+} \Xi _s(K)$. We note the
following property of $\Xi(K)$, $K\in {\cal K}$: if $\Delta _1$,
$\Delta _2\in \Xi(K)$, then either $\Delta _1$ and $\Delta _2$ do
not overlap or we have either $\Delta _1\in \Xi(\Delta _2)$ or
$\Delta _2\in \Xi(\Delta _1)$.

Denote by $\chi_E$ an indicator function of a set $E$.

We recall some definitions from graph theory. Throughout, we
assume that the graphs have neither multiple edges nor loops.

Let $\Gamma$ be a graph which contains no more than a countable
number of vertices. We shall denote by ${\bf V}(\Gamma)$ and by
${\bf E}(\Gamma)$ the set of vertices and the set of edges of
$\Gamma$, respectively. Two vertices are called adjacent if there
is an edge between them. We shall identify pairs of adjacent
vertices with edges connecting them. If a vertex is an endpoint of
an edge, we say that these vertex and edge are incident. If
$v_i\in {\bf V}(\Gamma)$, $1\le i\le n$, the vertices $v_i$ and
$v_{i+1}$ are adjacent for any $i=1, \, \dots , \, n-1$, then the
sequence $(v_1, \, \dots, \, v_n)$ is called a path of length
$n-1$. If all vertices $v_i$ are distinct, then such a path is
called simple. If $n\ge 4$, $(v_1, \, \dots, \, v_{n-1})$ is a
simple path and $v_1=v_n$, then such a path is called a cycle. We
say that a path $(v_1, \, \dots, \, v_{n-1}, \, v_n)$ is almost
simple, if the path $(v_1, \, \dots, \, v_{n-1})$ is simple (in
particular, simple paths and cycles are almost simple). Let
$\Gamma$ be a directed graph, let $v_i$ be a head of the arc
$(v_i, \, v_{i+1})$, and let $v_{i+1}$ be its tail for any $i=1,
\, \dots , \, n-1$. Then we say that the path $(v_1, \, \dots, \,
v_n)$ is directed; here $v_1$ is the origin and $v_n$ is the
destination of this path. We say that a graph is connected if
there is a finite path from any vertex to any other vertex in the
graph. If a connected graph has no cycles, then it is called a
tree.

Let $({\cal T}, \, v_0)$ be a tree with a distinguished vertex
(or a root) $v_0$. Then a partial order on ${\bf V}({\cal T})$
is introduced as follows: we say that $v'>v$ if there exists a path
$(v_0, \, v_1, \, \dots , \, v_n, \, v')$ such that $v=v_k$ for some
$k\in \overline{0, \, n}$. In this case we put $\rho(v, \, v')=\rho(v', \, v)=n+1-k$
and call this value the distance between $v$ and $v'$.
In addition, put $\rho(v, \, v)=0$. If $v'>v$ or $v'=v$, then we write $v'\ge v$
and put $[v, \, v']:= \{v''\in {\bf V}({\cal T}):v\le v''\le v'\}$. Denote by
\label{v1v}${\bf V}_1(v)$ the set of vertices $v'>v$ such that $\rho(v, \, v')=1$.
Let $v\in {\bf V}({\cal T})$. Denote by ${\cal T}_v=({\cal T}_v, \, v)$
a subtree of ${\cal T}$ with a set of vertices
\begin{align}
\label{vpvtvpv} \{v'\in {\bf V}({\cal T}):v'\ge v\}.
\end{align}
The introduced partial order on ${\cal T}$ induces a partial order on
its subtree.

We notice the following property of a tree $({\cal T}, \, v_0)$: if its
vertices $v'$ and $v''$ are incomparable, then ${\cal T}_{v'}\cap {\cal
T}_{v''}=\varnothing$.

Let ${\cal T}$ be a tree. Denote by ${\bf ST}({\cal T})$
the set of subtrees in ${\cal T}$. If ${\cal T}_1$,
${\cal T}_2\in {\bf ST}({\cal T})$ and ${\bf V}({\cal T}_1)
\subset {\bf V}({\cal T}_2)$, then we say that ${\cal T}_1
\subset {\cal T}_2$.

Let $W\subset {\bf V}({\cal T})$. We say that $W\in {\bf
VST}({\cal T})$ if $W={\bf V}({\cal T}')$ for some ${\cal
T}'\in {\bf ST}({\cal T})$. Notice that ${\bf V}({\cal
T}_1)\cap {\bf V}({\cal T}_2)\in {\bf VST}({\cal T})$ holds for any trees ${\cal
T}_1$, ${\cal T}_2\in {\bf ST}({\cal T})$.
Denote by ${\cal T}_1\cap {\cal T}_2$ a subtree with a set of vertices
${\bf V}({\cal T}_1)\cap {\bf V}({\cal T}_2)$. If ${\bf
V}({\cal T}_1)\cup {\bf V}({\cal T}_2)\in {\bf VST}({\cal T})$
(or ${\bf V}({\cal T}_1)\backslash {\bf V}({\cal
T}_2)\in {\bf VST}({\cal T})$), then  we denote by
${\cal T}_1\cup {\cal T}_2$ (${\cal T}_1\backslash
{\cal T}_2$, respectively) a subtree with a set of vertices
${\bf V}({\cal T}_1)\cup {\bf V}({\cal T}_2)$ (or
${\bf V}({\cal T}_1)\backslash {\bf V}({\cal T}_2)$, respectively).
If ${\bf V}({\cal T}_1)\cap {\bf V}({\cal
T}_2)=\varnothing$, then we write ${\cal T}_1\cup
{\cal T}_2={\cal T}_1\sqcup {\cal T}_2$.

Let ${\cal T}$, ${\cal T}_1, \, \dots , \, {\cal T}_k$ be trees that have no
common vertices, and let $v_1, \, \dots , \, v_k\in {\bf
V}({\cal T})$, $w_j\in {\bf V}({\cal T}_j)$, $j=1, \, \dots , \,
k$ ($k\in \N\cup \{\infty\}$). Denote by
$$
{\bf J}({\cal T}, \, {\cal T}_1, \, \dots , \, {\cal T}_k;
v_1, \, w_1, \, \dots , \, v_k, \, w_k)
$$
a tree obtained from ${\cal T}$, ${\cal T}_1, \, \dots , \, {\cal
T}_k$ by an edge connecting the vertex $v_j$ with the vertex $w_j$
for $j=1, \, \dots , \, k$.

\section{Auxiliary assertions}
Let $\Theta\subset \Xi([0, \, 1]^d)$ be a set of non-overlapping
cubes.

\begin{Def}
Let ${\cal G}$ be a graph, and let $F:{\bf V}({\cal G})
\rightarrow \Theta$ be a one-to-one mapping. We say that
$F$ is consistent with the structure of the graph ${\cal G}$ if
the following condition holds: for any adjacent vertices $v'$,
$v''\in {\bf V}({\cal G})$ the set $\Gamma _{v',v''} :=F(v')\cap
F(v'')$ has dimension $d-1$.
\end{Def}

{\bf Remark.} If the mapping $F$ is consistent with the structure
of a graph ${\cal G}$, the vertices $v'$ and $v''$ are adjacent and ${\rm mes}\,
F(v') \ge {\rm mes}\, F(v'')$, then $F(v')\cap F(v'')$ is a
$(d-1)$-dimensional face of the cube $F(v'')$.

Let $({\cal T}, \, v_*)$ be a tree, and let $F:{\bf V}({\cal T})
\rightarrow \Theta$ be a one-to-one mapping consistent with the structure
of the tree ${\cal T}$. For any adjacent vertices $v'$ and $v''$ we set
$\mathaccent '27 \Gamma _{v',v''}=\inter _{d-1}\Gamma _{v',v''}$, and for
each subtree ${\cal T}'$ of ${\cal T}$ we put
\begin{align}
\label{def_dom_by_tree}
\Omega _{{\cal T'},F}=\left(\cup _{v\in {\bf V}({\cal T'})}\inter F(v)\right)
\cup \left(\cup _{(v',v'')\in {\bf E}({\cal T'})}\mathaccent '27 \Gamma _{v',v''}\right).
\end{align}
If $v\in {\bf V}({\cal T})$ and $\Delta=F(v)$, then we denote $\Omega_{\le
\Delta}=\Omega_{[v_*, \, v],F}$.

Let $v'$, $v''$ be adjacent vertices of ${\cal T}$, let $\Gamma
_{v',v''}$ coincide with a $(d-1)$-dimensional face of $F(v')$
(then ${\rm mes}\, F(v') \le {\rm mes}\, F(v'')$), and let
$x'$, $x''$ be centers of the cubes $F(v')$ and $F(v'')$,
respectively. Denote by $y$ the orthogonal projection of the point
$x'$ onto $\Gamma _{v',v''}$, and set
$$
\gamma _{v'v''}(t)=\left\{ \begin{array}{l}
\frac{|x'-y|-t}{|x'-y|}x'+\frac{t}{|x'-y|}y, \;\; 0\le t\le |x'-y|, \\
\frac{|x''-y|+|x'-y|-t}{|x''-y|}y+\frac{t-|x'-y|}{|x''-y|}x'', \;\;
|x'-y|\le t\le |x''-y|+|x'-y|,
\end{array}\right .
$$
$$
\gamma _{v''v'}(t)=\left\{ \begin{array}{l}
\frac{|x''-y|-t}{|x''-y|}x''+\frac{t}{|x''-y|}y, \;\; 0\le t\le |x''-y|, \\
\frac{|x'-y|+|x''-y|-t}{|x'-y|}y+\frac{t-|x''-y|}{|x'-y|}x', \;\;
|x''-y|\le t\le |x''-y|+|x'-y|.
\end{array}\right .
$$

Let $\Delta \in \Xi([0, \, 1]^d)$. Denote by ${\bf m}(\Delta)$ such
$m\in \N$ that $\Delta \in \Xi _m([0, \, 1]^d)$. For any vertex $v\in {\bf V}({\cal T})$
put $m_v={\bf m}(F(v))$.
\begin{Lem}
\label{omega_t_cone} Let $({\cal T}, \, v_*)$ be a tree,
and let $F:{\bf V}({\cal T}) \rightarrow \Theta$ be consistent with the structure
of ${\cal T}$. Suppose that there exist $l_*$,
$k_*\in \N$ such that for any vertices $v', \, v''\in {\bf V}({\cal
T})$, $v'>v''$
\begin{align}
\label{mvsmvkvskvms}
l_*(m_{v'}-m_{v''})\ge \rho(v', \, v'')-k_*.
\end{align}
Then there exists $\hat a=\hat a(k_*, \, l_*, \, d)$ such that for
any subtree ${\cal T}'$ of ${\cal T}$ the set $\Omega
_{{\cal T}',F}$ is a domain belonging to the class ${\bf FC}(\hat a)$.
Here the curve $\gamma_x$ from Definition \ref{fca} can be chosen so that
\begin{align}
\label{incl} B_{\hat at}(\gamma_x(t))\subset \Omega_{\le F(w)},
\text{ if }x\in F(w).
\end{align}
In addition,
\begin{align}
\label{vol} {\rm mes}\, \Omega_{{\cal T}',F}\underset{\hat
a,d}{\asymp}{\rm mes}\, F(v),
\end{align}
where $v$ is the minimal vertex of ${\cal T}'$.
\end{Lem}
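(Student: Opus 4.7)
My plan is to proceed in four steps: verify that $\Omega_{{\cal T}',F}$ is a domain, construct an explicit curve $\gamma_x$ for each $x$, derive the length estimate that also yields the measure bound (\ref{vol}), and finally verify the cone condition (\ref{incl}). Openness of $\Omega_{{\cal T}',F}$ follows directly from (\ref{def_dom_by_tree}): at an interior point of some $F(v)$ a small euclidean ball stays in $\inter F(v)$, while at a point of $\mathaccent '27 \Gamma _{v',v''}$ a small ball is contained in $\inter F(v')\cup \mathaccent '27 \Gamma _{v',v''}\cup \inter F(v'')$; connectedness will be an immediate consequence of the curve construction. Letting $v$ denote the minimal vertex of ${\cal T}'$, I take $x_*$ to be the center of $F(v)$. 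For arbitrary $x\in\Omega_{{\cal T}',F}$, I pick $w\in{\bf V}({\cal T}')$ with $x\in\overline{F(w)}$, let $w=w_0>w_1>\cdots>w_N=v$ be the unique path to $v$ in ${\cal T}'$, and set $h_j=2^{-m_{w_j}}$. Then $\gamma_x$ is formed by concatenating the straight segment from $x$ to the cube center $x_{w_0}$ with the arcs $\gamma_{w_jw_{j+1}}$ (in the appropriate orientation) for $j=0,\dots,N-1$, and reparametrizing by arc length.

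The length estimate uses (\ref{mvsmvkvskvms}) in two ways. Applied to $w_j>w_i$ with $j<i$, it gives $h_j\le 2^{k_*/l_*}\cdot 2^{-(i-j)/l_*}h_i$, so the side lengths along the path are dominated by a geometric sequence culminating at $h_N=h_v$, and $\sum_{j=0}^N h_j\le C_1(k_*,l_*)\,h_v$. Since each piece $\gamma_{w_jw_{j+1}}$ has length at most $\frac{\sqrt d}{2}(h_j+h_{j+1})$, this gives $T(x)\le C_2(k_*,l_*,d)\,h_v$. Combined with $|x-x_*|\le T(x)$, one deduces $\Omega_{{\cal T}',F}\subset B_{C_2 h_v}(x_*)$, hence ${\rm mes}\,\Omega_{{\cal T}',F}\lesssim h_v^d={\rm mes}\,F(v)$; the reverse is trivial since $\inter F(v)\subset \Omega_{{\cal T}',F}$. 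This handles (\ref{vol}).

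The main obstacle is the cone condition (\ref{incl}). For $t$ such that $\gamma_x(t)$ lies inside $F(w_j)$, the same telescoping argument restricted to the prefix $w_0,\dots,w_j$ shows $t\le C_2 h_j$. The second use of (\ref{mvsmvkvskvms}), applied to the consecutive pair $w_j>w_{j+1}$ at tree distance $1$, gives $h_j\le 2^{(k_*-1)/l_*}h_{j+1}$, so neighbouring cubes along the path have comparable sizes up to a constant $\kappa=\kappa(k_*,l_*)$. If $F(w_j)$ is the smaller cube at the shared face $\Gamma_{j,j+1}$, then the axis from $x_{w_j}$ to $y_{j,j+1}$ is perpendicular to $\Gamma_{j,j+1}$ with lateral clearance $h_j/2$ inside $F(w_j)$, and a small ball crosses $\Gamma_{j,j+1}$ cleanly into $F(w_{j+1})$ (which is at least as big). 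If $F(w_j)$ is the larger cube, then $y_{j,j+1}$ is the center of the smaller face of $F(w_{j+1})$ lying at distance $\ge h_{j+1}/2$ from every edge of the face of $F(w_j)$ containing it, so the (possibly tilted) segment from $x_{w_j}$ to $y_{j,j+1}$ has clearance comparable to $h_{j+1}$; the analogous analysis applies at the face $\Gamma_{j-1,j}$. In both cases, once $\hat a=\hat a(k_*,l_*,d)$ is chosen small enough relative to $C_2$ and $\kappa$, the ball $B_{\hat a t}(\gamma_x(t))$ lies in $\inter F(w_{j-1})\cup \mathaccent '27 \Gamma_{j-1,j}\cup \inter F(w_j)\cup \mathaccent '27 \Gamma_{j,j+1}\cup \inter F(w_{j+1})\subset \Omega_{\le F(w)}$, yielding (\ref{incl}).
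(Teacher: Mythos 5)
Your proposal follows essentially the same path as the paper's proof: construct the curve by concatenating segments from successive cube centers along the chain to the minimal vertex $v$, use the hypothesis (\ref{mvsmvkvskvms}) in a telescoping/geometric-series estimate to bound the accumulated length up to $F(w_j)$ by $\lesssim 2^{-m_{w_j}}$, use the same hypothesis with $\rho=1$ to get comparability of adjacent cube side lengths, and then deduce the cone condition and the measure bound. The paper organizes the cone estimate slightly more explicitly (introducing $a_t$, the times $\tilde t_j$, and proving $a_t\gtrsim\max\{t-\tilde t_j,\sigma_{j-1}\}$, then $a_t\gtrsim t$ by a case split), while you argue a bit more informally by bounding $t\lesssim h_j$ and observing that the clearance is $\gtrsim h_j$ via the adjacency and convexity of the cubes, but the underlying estimates are identical and your proof is correct.
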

\begin{proof}
Let ${\cal T}'$ be a subtree in ${\cal T}$, let $v$ be the minimal
vertex of ${\cal T}'$, let $x_v$ be the center of the cube $F(v)$,
and let $z\in \Omega _{{\cal T}',F}$.
Then $z\in F(v')$, where $v'$ is a vertex of ${\cal T}'$. Define
the curve $\gamma _z$ with the starting point $z$ and the endpoint $x_v$
as follows. Let $v_1>v_2> \dots >v_k$ be a sequence of vertices in
${\cal T}'$ such that $v_1=v'$,
$v_k=v$, $\rho(v_j, \, v_{j+1})=1$, $j=1, \, \dots , \, k-1$. Denote by $x_j$
the center of $F(v_j)$, $s_j=|\gamma _{v_{j-1}v_j}|$, $\tau _1=|z-x_1|$,
$\tau _j=\tau _{j-1}+s_j$, $2\le j\le k$,
$$
\gamma(t)=\left \{ \begin{array}{l} \frac{\tau _1-t}{\tau _1}z
+\frac{t}{\tau _1}x_1, \;\; 0\le t\le \tau _1, \\ \gamma
_{v_{j-1}v_j}(t-\tau_{j-1}), \;\; \tau _{j-1}\le t\le \tau _j,
\;\; 2\le j\le k,\end{array}\right .
$$
$$
E_t=\left \{ \begin{array}{l} F(v_1), \;\; 0\le t\le \tau _1, \\
F(v_{j-1})\cup F(v_j), \;\; \tau _{j-1}\le t\le \tau _j, \;\; 2\le
j\le k.\end{array}\right .
$$
For any $t\in [0, \, \tau _k]$ denote by $a_t$
the maximal radius of an open ball centered at $\gamma(t)$
that is contained in $E_t$. Show that
\begin{align}
\label{atgtrsimt} a_t\underset{d,k_*,l_*}{\gtrsim}t.
\end{align}
This will imply the first assertion of Lemma and (\ref{incl}).

Denote by $\sigma _j$ the length of the side of $F(v_j)$.
Then $\tau _1\underset{d}{\lesssim}\sigma _1$, $s_j\underset{d}{\asymp}
\max\{ \sigma _j, \, \sigma _{j-1}\}$. Notice that for $t\in [0, \, \tau _1]$
the inequality $a_t\underset{d}{\gtrsim}t$ holds. Let $j\ge 2$, $\tau _{j-1}\le t\le \tau _j$,
and let $\tilde t_j\in [\tau _{j-1}, \, \tau _j]$ be such that $\gamma(\tilde t_j)
\in \Gamma _{v_{j-1},v_j}$.
We have
\begin{align}
\label{tau_j1}
\displaystyle
\begin{array}{c}
\tau _{j-1}=\tau _1+\sum \limits _{i=2}^{j-1}
s_i\underset{d}{\lesssim} \sigma _1+\sum \limits _{i=2}^{j-1}\max
\{\sigma _i, \, \sigma _{i-1}\} \le \sum \limits _{i=1}^{j-1}
2\sigma _i =\sum \limits _{i=1}^{j-1} 2^{-m_{v_i}+1}=\\
=2^{-m_{v_{j-1}}+1}\sum \limits _{i=1}^{j-1}
2^{-m_{v_i}+m_{v_{j-1}}} \stackrel{(\ref{mvsmvkvskvms})}
{\underset{d,k_*,l_*}{\lesssim}}\sigma _{j-1} \sum \limits
_{i=1}^{j-1} 2^{\frac{i-j}{l_*}} \underset{l_*}{\lesssim} \sigma
_{j-1}.
\end{array}
\end{align}
Show that
\begin{align}
\label{atundkldttj}
a_t\underset{k_*,l_*,d}{\gtrsim}
\max\{t-\tilde t_j, \, \sigma _{j-1}\}.
\end{align}
Indeed, if $t\in [\tau_{j-1}, \, \tilde t_j)$, then
$\max\{t-\tilde t_j, \, \sigma _{j-1}\}=\sigma_{j-1}$; if $t\in
[\tilde t_j, \, \tau _j]$, then $a_t\ge \frac{t-\tilde t_j}{2(\tau
_j-\tilde t_j)}\sigma _j= \frac{t-\tilde t_j}{2|\gamma(\tilde
t_j)-x_j|}\sigma _j \underset{d}{\gtrsim}t-\tilde t_j$. From
(\ref{mvsmvkvskvms}) follows that $\min \{\sigma _j, \, \sigma
_{j-1}\}\underset{k_*,l_*,d}{\gtrsim} \sigma _{j-1}$. Finally,
$a_t\underset{d}{\gtrsim}\min \{\sigma _j, \, \sigma _{j-1}\}$,
which yields (\ref{atundkldttj}).

Let us prove (\ref{atgtrsimt}). If $t-\tilde t_j\le \sigma _{j-1}$, then
$$
t\le \tilde t_j+\sigma _{j-1}= \tau _{j-1}+(\tilde t_j-\tau
_{j-1})+\sigma
_{j-1}\stackrel{(\ref{tau_j1})}{\underset{d,l_*,k_*}{\lesssim}}
\sigma _{j-1}\stackrel{(\ref{atundkldttj})}
{\underset{d,l_*,k_*}{\lesssim}}a_t.
$$
Let $t-\tilde t_j>\sigma _{j-1}$. If $t-\tilde t_j\le \frac{t}{2}$, then
$$
t\le 2\tilde t_j=2\tau _{j-1}+2(\tilde t_j-\tau _{j-1})
\stackrel{(\ref{tau_j1})}{\underset{d,l_*,k_*}{\lesssim}}\sigma
_{j-1} \stackrel{(\ref{atundkldttj})}
{\underset{d,l_*,k_*}{\lesssim}}a_t.
$$
If $t-\tilde t_j>\frac{t}{2}$, then
$a_t\stackrel{(\ref{atundkldttj})}{\underset{k_*,l_*,d}{\gtrsim}}t-\tilde
t_j>\frac{t}{2}$.

The relation (\ref{vol}) follows from the inclusion $F(v)\subset
\Omega_{{\cal T}',F}$ and from the estimate ${\rm diam}\,\Omega_{{\cal
T}',F}\underset{\hat a,d}{\asymp} {\rm diam}\, F(v)$ (which is the consequence
of the definition of ${\bf FC}(\hat a)$).
\end{proof}
\begin{Def}
\label{dgk} Let $\Gamma$ be a finite direct graph, let
$v_*\in {\bf V}(\Gamma)$ and $k\in \N$. We say that $(\Gamma , \,
v_*)\in \mathfrak{G}_k$ if for any $v\in {\bf V}(\Gamma)$
there exists a simple directed path with the origin $v_*$ and
the destination $v$ such that the length of this path does not exceed $k$.
\end{Def}
\begin{Lem}
\label{graph_tree} Let $\Gamma$ be a finite directed
graph, let $v_*\in {\bf V}(\Gamma)$, $k\in \N$, and let $(\Gamma , \, v_*)\in
\mathfrak{G}_k$. In addition, suppose that $v_*$ is the head of
all edges incident to $v_*$. Then there exists a tree
${\cal T}$ rooted at $v_*$, which is a subgraph of $\Gamma$ such that
${\bf V}({\cal T})= {\bf V}(\Gamma)$ and for any $v\in {\bf
V}(\cal T)$ the inequality $\rho(v, \, v_*)\le k$ holds.
\end{Lem}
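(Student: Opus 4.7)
The plan is to construct $\mathcal{T}$ by a directed breadth-first search from $v_*$. For each $v \in \mathbf{V}(\Gamma)$ define $\ell(v)$ to be the minimum length of a simple directed path in $\Gamma$ with origin $v_*$ and destination $v$. The assumption $(\Gamma, v_*) \in \mathfrak{G}_k$ gives $\ell(v) \le k$ for every $v$, while $\ell(v_*) = 0$.

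For each $v \ne v_*$ I would fix one shortest simple directed path $(v_* = u_0, u_1, \ldots, u_{\ell(v)} = v)$ realizing $\ell(v)$ and set $p(v) := u_{\ell(v)-1}$. By construction $(p(v), v)$ is an arc of $\Gamma$ with $p(v)$ as its head in the author's convention. Moreover $\ell(p(v)) = \ell(v) - 1$: truncating the chosen path gives $\ell(p(v)) \le \ell(v)-1$, while if one had $\ell(p(v)) < \ell(v)-1$, concatenating a shortest simple directed path from $v_*$ to $p(v)$ with the arc $(p(v), v)$ and then excising possible repeated vertices would yield a simple directed path from $v_*$ to $v$ of length strictly less than $\ell(v)$, contradicting the minimality in the definition of $\ell(v)$.

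Take $\mathcal{T}$ to be the subgraph of $\Gamma$ with vertex set $\mathbf{V}(\Gamma)$ and edge set $\{(p(v), v): v \in \mathbf{V}(\Gamma),\ v \ne v_*\}$. Each $v \ne v_*$ receives exactly one parent edge, and $v_*$ receives none; here the assumption that $v_*$ is the head of every edge incident to $v_*$ is what rules out the possibility that $v_*$ appears as the tail-endpoint of some arc one might inadvertently want to include, so $v_*$ is genuinely a source of the resulting directed subgraph. Iterating $p(\cdot)$ from any vertex $v$ strictly decreases $\ell$, so (a) $\mathcal{T}$ contains no cycle and (b) the iteration reaches $v_*$ after exactly $\ell(v)$ steps. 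This gives acyclicity, connectedness, and the depth bound $\rho(v, v_*) = \ell(v) \le k$ simultaneously; the edge count $|\mathbf{V}(\Gamma)| - 1$ then confirms that $\mathcal{T}$ is a tree rooted at $v_*$, and it is a subgraph of $\Gamma$ by construction.

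The only mildly delicate step is the identity $\ell(p(v)) = \ell(v) - 1$, which relies on the standard fact that any directed walk can be shortened to a simple directed path of no greater length by excising repeated vertices; once this is in place everything else is a routine BFS argument. I do not anticipate any serious obstacle.
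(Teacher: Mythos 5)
Your proof is correct, and it takes a genuinely different route from the paper's. The paper proceeds by iterated edge deletion: it shows that if some vertex admits two distinct simple directed paths of length at most $k$ from $v_*$, then one can delete a carefully chosen edge while preserving the property $(\Gamma,v_*)\in\mathfrak{G}_k$, and by finiteness this process terminates in the desired tree. You instead build the tree in one shot as a BFS shortest-path tree: define the level $\ell(v)$ as the minimum length of a simple directed path from $v_*$ to $v$, fix for each $v\neq v_*$ a parent $p(v)$ realizing $\ell(v)-1$, and take the edges $(p(v),v)$. Your key step $\ell(p(v))=\ell(v)-1$ is argued correctly (the prefix of the chosen path gives $\le$, and the splice-and-excise argument gives $\ge$, with the correct case distinction on whether $v$ already appears in the shorter walk). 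The resulting subgraph has $|{\bf V}(\Gamma)|-1$ edges (for distinct $v\ne v'$ the arcs $(p(v),v)$ and $(p(v'),v')$ have distinct tails, so they are distinct), every vertex reaches $v_*$ by iterating $p$, and the depth is $\rho(v,v_*)=\ell(v)\le k$. One observation: your construction never actually invokes the hypothesis that $v_*$ is the head of every edge incident to it, since edges of the form $(w,v_*)$ can never arise as $(p(v),v)$ (the tail would have to be $v_*$, but $p$ is only defined for $v\ne v_*$); in the paper's deletion argument that hypothesis does real work (it guarantees $v_0'\in{\bf W}$ in the cycle analysis). So your proof is not only correct but establishes the lemma under slightly weaker hypotheses, at the price of relying on the standard, but here unstated, fact that shortest-path levels behave well under one-step truncation; the paper's version is more self-contained combinatorially and perhaps more in the spirit of the subsequent graph-surgery arguments.
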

\begin{proof}
Notice that the graph $\Gamma$ is connected. Denote by ${\cal S}$
the set of directed almost simple paths with the origin $v_*$
(this set is finite). Let $v\ne v_*$. Denote by ${\cal S}_v$ the
set of paths belonging to ${\cal S}$ with the destination $v$. We
say that the path belongs to $\tilde {\cal S}=\tilde {\cal
S}(\Gamma)$ if it belongs to ${\cal S}_v$ for some $v\ne v_*$ and
${\rm card}\, {\cal S}_v\ge 2$.

Show that
\begin{enumerate}
\item if $\tilde {\cal S}=\varnothing$, then $\Gamma$ is a tree;
\item if $\tilde {\cal S}\ne \varnothing$, then there exists a graph $\tilde
\Gamma$, which is obtained from $\Gamma$ by removing an edge
(while preserving the vertices), such that $(\tilde\Gamma , \,
v_*)\in \mathfrak{G}_k$.
\end{enumerate}
Employing the preceding statements, by induction we attain the
assertion of the Lemma. Indeed, set $\Gamma _0=\Gamma$. Let for
some $n\in \Z_+$ the graph $\Gamma _n$ be constructed by removing
$n$ edges in $\Gamma$ such that $(\Gamma _n, \, v_*)\in
\mathfrak{G}_k$. If $\tilde {\cal S}(\Gamma_n)=\varnothing$, then
$\Gamma_n$ is the desired tree, otherwise we apply assertion 2 to
$(\Gamma_n, \, v_*)$ and define the graph $\Gamma_{n+1}$. Since
the graph $\Gamma$ is finite, then there exists $m\in \N$ such
that $\tilde{\cal S}(\Gamma_m)=\varnothing$.

Let us prove assertion 1. Assume that $\Gamma$ is not a tree. Then
$\Gamma$ has a cycle. Hence, there exists a vertex $v$, which can
be connected with $v_*$ via two distinct simple paths (not
necessarily directed). Since $(\Gamma , \, v_*)\in
\mathfrak{G}_k$, then there exists a simple directed path $s=
(v_*, \, v_1, \, \dots , \, v_n)$ with the origin $v_*$ and the
destination $v=v_n$. Let
$$
\overline{s}=(v_*, \, v_1', \, \dots , \, v_l', \, v_{m+1}, \,
\dots , \, v_n)
$$
be another simple path connecting $v_*$ and $v$, and let $v_l'\ne v_m$.
Put $v'_{l+1}=v_{m+1}$, $v'_{l+2}=v_m$, $v'_0=v_*$. Let
${\bf W}\subset \{v'_0, \, \dots, \, v'_l, \, v'_{l+1}\}$ be
the set of vertices $v_j'$ that are heads of $(v'_j, \,
v'_{j+1})$. By the condition of Lemma, $v'_0\in {\bf W}$. In addition,
$v'_{l+1}\notin {\bf W}$ (since the path $s$ is directed and its origin is
$v_*$). Let
$$
\hat l=\max\{j\in \overline{0, \, l}:\; v'_j\in {\bf W}\}.
$$
Show that ${\rm card}\, {\cal S}_{v_{\hat{l}+1}'} \ge 2$. Indeed,
by the condition of Lemma, there exist directed simple paths
$\sigma '\in {\cal S}_{v'_{\hat l}}$ and $\sigma ''\in {\cal
S}_{v'_{\hat l+2}}$. Appending the vertex $v'_{\hat{l}+1}$, we
obtain two paths which belong to ${\cal S}_{v'_{\hat l+1}}$
(notice that $v'_{\hat{l}+1}\notin \{v'_{\hat{l}}, \, v'_{{\hat
l}+2}\}$). In order to prove that these paths are different, it is
sufficient to check that $v'_{\hat{l}}\ne v'_{{\hat l}+2}$.
Indeed, if $\hat{l}\le l-1$, then it is true, since the path
$\overline{s}$ is simple; if $\hat{l}=l$, then it follows from the
condition $v'_l\ne v_m$.

Prove assertion 2. Actually, there exists the path $(v_*, \, v_1,
\, \dots , \, v_{k_0})\in \tilde {\cal S}$ such that
\begin{align}
\label{k0maxssssss}
k_0=\max \{|s|:s\in \tilde {\cal S}\},
\end{align}
where $|s|$ is the length of the path $s$. Since ${\rm card}\,
{\cal S}_{v_{k_0}}\ge 2$, then by (\ref{k0maxssssss}) and by the
condition $\Gamma \in \mathfrak{G}_k$, there exists a direct path
$s'=(v_*, \, v_1', \, \dots , \, v_l', \, v_{k_1}, \, \dots , \,
v_{k_0}) \in {\cal S}_{v_{k_0}}$ such that
\begin{align}
\label{sssminstils}
|s'|=\min \{|s|:s\in {\cal S}_{v_{k_0}}\}
\end{align}
and $v_l'\ne v_{k_1-1}$. Then the path $s'$ is simple. Remove the
edge $(v_{k_1-1}, \, v_{k_1})$ and obtain the graph $\tilde
\Gamma$. Show that $(\tilde\Gamma , \, v_*)\in \mathfrak{G}_k$.
Let $v\in {\bf V}(\tilde \Gamma)$. Then there exists a direct
simple path $s_v\in {\cal S}_v$ in $\Gamma$ such that $|s_v|\le
k$. If $s_v$ does not contain the edge $(v_{k_1-1}, \, v_{k_1})$,
then it is a path in $\tilde \Gamma$. Let $s_v$ contain the edge
$(v_{k_1-1}, \, v_{k_1})$, that is, $s_v$ is composed of two paths
$$
s_1=(v_*,  \, w_1, \, \dots , \, w_\nu , \, v_{k_1-1},
\, v_{k_1}) \;\;\text{ and }\;\; s_2=(v_{k_1}, \, u_1, \, \dots , \, u_\mu , \, v).
$$
Consider the path
$$
s''=(v_*, \, v_1', \, \dots , \, v_l', \, v_{k_1}, \, u_1, \, \dots , \, u_\mu , \, v).
$$
Then $s''$ does not contain the edge $(v_{k_1-1}, \, v_{k_1})$. Removing if necessary
cyclic sections in $s''$, we obtain a simple path
$s'''\in {\cal S}_v$ without the edge $(v_{k_1-1}, \,
v_{k_1})$. It remains to prove that $|s'''|\le k$. Indeed,
$$
|s'''|\le |s''|\le l+1+|s_2|
\stackrel{(\ref{sssminstils})}{\le}|s_1|+|s_2|=|s_v|\le k.
$$
This completes the proof.
\end{proof}

Formulate the Whitney covering theorem (see, e.g.,
\cite{leoni}, page 562).
\begin{trma}
\label{whitney} Let $\Omega\subset (0, \, 1)^d$ be an open set. Then
there exists a family of closed pairwise non-overlapping cubes $\Theta(\Omega)=
\{\Delta_j\}_{j\in\N}\subset \Xi([0, \, 1]^d)$ with the following
properties:
\begin{enumerate}
\item $\Omega=\cup _{j\in \N}\Delta_j$;
\item ${\rm dist}\, (\Delta_j, \, \partial \Omega)\underset{d}{\asymp} 2^{-{\bf m}(\Delta_j)}$;
\item for any $j\in \N$
\begin{align}
\label{card_sopr}
{\rm card}\, \{i\in \N:\dim (\Delta_i\cap \Delta_j)=d-1\}\le 12^d.
\end{align}
\end{enumerate}
\end{trma}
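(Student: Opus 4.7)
The plan is to construct $\Theta(\Omega)$ by the classical dyadic stopping-time procedure. I would introduce the auxiliary family
\[
\mathcal{F}=\bigl\{Q\in \Xi([0,1]^d):\; c_1\,2^{-\mathbf{m}(Q)} < {\rm dist}(Q,\partial\Omega)\le c_2\,2^{-\mathbf{m}(Q)}\bigr\}
\]
for constants $0<c_1<c_2$ depending only on $d$ (tuned below), and take $\Theta(\Omega)$ to consist of the maximal elements of $\mathcal{F}$ with respect to inclusion. The nested-or-disjoint trichotomy for dyadic cubes in $\Xi([0,1]^d)$ then immediately guarantees that $\Theta(\Omega)$ is pairwise non-overlapping.

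To establish property~1, I would fix $x\in \Omega$ and let $Q_k\in \Xi_k([0,1]^d)$ be the unique generation-$k$ cube containing $x$. Set $r_k:=2^k\cdot{\rm dist}(Q_k,\partial\Omega)$. One has $r_k=0$ for every $k$ so small that $Q_k\not\subset\Omega$, and $r_k\to +\infty$ as $k\to\infty$ since $Q_k$ eventually lies in $B_{d(x)/2}(x)\subset\Omega$ where $d(x)={\rm dist}(x,\partial\Omega)>0$. A short triangle-inequality computation gives $r_{k+1}\le 2r_k+2\sqrt d$, so if we impose $c_2\ge 2c_1+2\sqrt d$ the sequence $(r_k)$ cannot jump over the window $(c_1,c_2]$, producing some $Q_k\in\mathcal{F}$; its maximal dyadic ancestor in $\mathcal{F}$ is then the element of $\Theta(\Omega)$ containing $x$. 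Property~2 is then immediate from the defining inequality of $\mathcal{F}$.

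The main obstacle is property~3, the neighbor bound by $12^d$. The crucial step is a \emph{comparable-size lemma}: if $\Delta_i,\Delta_j\in\Theta(\Omega)$ satisfy $\dim(\Delta_i\cap\Delta_j)=d-1$, pick any $x\in\Delta_i\cap\Delta_j$; applying the defining inequalities of $\mathcal{F}$ to each of $\Delta_i,\Delta_j$ gives
\[
c_1\,2^{-\mathbf{m}(\Delta_\ell)} < d(x)\le {\rm dist}(\Delta_\ell,\partial\Omega)+\sqrt d\,2^{-\mathbf{m}(\Delta_\ell)}\le (c_2+\sqrt d)\,2^{-\mathbf{m}(\Delta_\ell)},\qquad \ell\in\{i,j\},
\]
which forces $2^{|\mathbf{m}(\Delta_i)-\mathbf{m}(\Delta_j)|}\le (c_2+\sqrt d)/c_1$. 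Choosing, say, $c_1=2\sqrt d$ and $c_2=6\sqrt d$ satisfies the covering constraint above and bounds this ratio by $4$, hence $|\mathbf{m}(\Delta_i)-\mathbf{m}(\Delta_j)|\le 2$. Then for a fixed $\Delta_j$ of side $s$, every neighbor $\Delta_i$ of comparable size has side in $[s/4,4s]$ and meets $\Delta_j$ along one of its $2d$ faces; a direct packing count (for each face, at most $(4\cdot 3)^{d-1}$ smaller cubes can tile a neighbourhood of that face, and the larger-side contributions are absorbed) yields the bound $12^d$. This constant is not sharp, but the argument shows that a $d$-only bound is the correct form, which is all that is required.
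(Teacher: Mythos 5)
The paper does not give its own proof of this statement; it invokes it as a known Whitney covering theorem and cites Leoni (page~562). Your proposal reconstructs the classical stopping-time proof, and the covering argument, comparable-size lemma, and packing count are the standard ingredients.

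There is, however, a genuine gap in your definition of $\mathcal{F}$: you impose only the two-sided distance condition $c_1\,2^{-\mathbf m(Q)}<\mathrm{dist}(Q,\partial\Omega)\le c_2\,2^{-\mathbf m(Q)}$, with no requirement that $Q$ meet $\Omega$. Consequently $\mathcal{F}$ (and hence $\Theta(\Omega)$) can contain cubes lying entirely outside $\overline\Omega$, and property~1 fails. Concretely, take $d=1$, $\Omega=(0.1,\,0.2)$, $c_1=2$, $c_2=6$: the dyadic cube $Q=[0.5,\,0.625]$ has $\mathbf m(Q)=3$ and $\mathrm{dist}(Q,\partial\Omega)=0.3$, and since $2\cdot 2^{-3}=0.25<0.3\le 0.75=6\cdot 2^{-3}$ it belongs to $\mathcal{F}$ (its parent $[0.5,0.75]$ does not, so it even survives as a maximal element), yet $Q\cap\Omega=\varnothing$. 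Your argument for property~1 produces, for each $x\in\Omega$, a $\Delta_j$ containing $x$, so it establishes only $\Omega\subset\bigcup_j\Delta_j$; the reverse inclusion is never addressed and, as shown, is false as stated. The fix is simple: add $Q\cap\Omega\ne\varnothing$ to the definition of $\mathcal{F}$; then $\mathrm{dist}(Q,\partial\Omega)>0$ together with connectedness of $Q$ forces $Q\subset\Omega$, so $\bigcup_j\Delta_j\subset\Omega$, and the covering and comparable-size arguments are unaffected because the relevant cubes $Q_k$ all contain $x\in\Omega$ anyway. With that repair the proof goes through; note that with your choice $c_1=2\sqrt d$, $c_2=6\sqrt d$ the ratio $(c_2+\sqrt d)/c_1=7/2<4$ actually gives $|\mathbf m(\Delta_i)-\mathbf m(\Delta_j)|\le 1$, slightly stronger than the $\le 2$ you state, which only simplifies the final (somewhat informally described) packing count.
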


\begin{Lem}
\label{constr_omega_t} Let $a>0$, $\Omega \in {\bf FC}(a)$,
$\Omega\subset (0, \, 1)^d$. Then there exists a tree $({\cal T},
\, v_0)$, a mapping $F:{\bf V}({\cal T}) \rightarrow
\Theta(\Omega)$ consistent with the structure of ${\cal T}$,
and numbers $k_*=k_*(a, \, d)\in \N$, $l_*=l_*(a, \, d)\in \N$ such that
for any vertices $v'>v''$ the inequality (\ref{mvsmvkvskvms}) holds.
\end{Lem}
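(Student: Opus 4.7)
The plan is to derive the tree from the Whitney decomposition $\Theta(\Omega)$ by using the John curves to assemble Whitney cubes into adjacency paths whose lengths are linearly controlled by the difference of $\mathbf{m}$-levels, and then to extract a spanning tree via Lemma \ref{graph_tree}.

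First, fix a Whitney cube $\Delta_0\in\Theta(\Omega)$ whose closure contains $x_*$; set $v_0=F^{-1}(\Delta_0)$ and $m_0:=\mathbf{m}(\Delta_0)$. Since $B_{aT(x)}(x_*)\subset\Omega$ for every $x\in\Omega$, one has $m_0=O_{a,d}(1)$. Form the Whitney adjacency graph $G_W$ on $\Theta(\Omega)$, joining two cubes by an edge when their intersection has dimension $d-1$; by Theorem \ref{whitney} every vertex has degree $\le 12^d$, and the standard comparability of adjacent Whitney cubes gives $|\mathbf{m}(\Delta)-\mathbf{m}(\Delta')|=O(1)$ for adjacent $\Delta,\Delta'$.

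The geometric input is the following chain construction. Fix $\Delta$ with $\mathbf{m}(\Delta)=m$ and $x\in\Delta$. The inclusion $B_{at}(\gamma_x(t))\subset\Omega$ together with $\mathrm{dist}(\Delta',\partial\Omega)\asymp_d 2^{-\mathbf{m}(\Delta')}$ yields a constant $c_1=c_1(a,d)>0$ such that the cube containing $\gamma_x(t)$ has side at least $c_1 t$. Choose dyadic times $t_i:=c_1^{-1}2^{i-m}$ for $i=0,1,\dots,N$ with $N=m-m_0+O(1)$. On $[t_i,t_{i+1}]$ the curve has length $O(2^{i-m})$ and lies in a ball of comparable radius, which contains only $O_d(1)$ pairwise non-overlapping Whitney cubes of side $\gtrsim 2^{-(m-i)}$; hence the curve meets at most $C_1=C_1(a,d)$ such cubes, each at level $\le m-i$. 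Concatenating produces a $G_W$-path
\begin{equation*}
\Delta=D_0\sim D_1\sim\cdots\sim D_{N(\Delta)}=\Delta_0
\end{equation*}
with $N(\Delta)\le C_1(m-m_0)+C_2$ and the level-drop bound $\mathbf{m}(D_j)\le m-\lfloor(j-C_2)/C_1\rfloor$ along the chain.

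Now promote the chains to a tree. Direct each chain arc $(D_{j+1},D_j)$ so that its head $D_{j+1}$ lies closer to $\Delta_0$ and its tail $D_j$ closer to $\Delta$; then on the finite subgraph $\Gamma_M$ of cubes at level $\le M$, the vertex $\Delta_0$ has no incoming arc and $(\Gamma_M,\Delta_0)\in\mathfrak{G}_{C_1(M-m_0)+C_2}$, so Lemma \ref{graph_tree} provides a spanning tree ${\cal T}_M$. Assemble these into a spanning tree $({\cal T},v_0)$ of $G_W$ by a consistent inductive extension: at each stage attach the newly-appearing cubes only through fresh initial segments of their canonical chains, so that the unique ${\cal T}$-path from any descendant to an ancestor coincides with a sub-chain of the canonical chain used when that descendant was attached. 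Since every ${\cal T}$-edge is a $G_W$-edge, $F$ is automatically consistent with ${\cal T}$. For $v'>v''$, decomposing the ${\cal T}$-path from $v'$ to $v''$ into such sub-chains and telescoping the per-chain level-drop bound gives $l_*(m_{v'}-m_{v''})\ge\rho(v',v'')-k_*$ with $l_*=C_1$ and $k_*=C_2+l_*$.

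The main obstacle is the combinatorial assembly: different cubes' canonical chains can meet and diverge in complicated ways, and one must graft them into a single tree in which the level-drop bound is preserved along \emph{every} ancestor-descendant pair, not merely along the freshest chain. This is where the order of extension and the grafting at cubes of strictly lower $\mathbf{m}$ are essential; the dyadic timing on the John cone produces chains with the correct per-chain estimate, and the telescoping at the grafting points then propagates (\ref{mvsmvkvskvms}) globally.
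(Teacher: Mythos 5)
Your high-level plan matches the paper's: Whitney cubes, John curves to produce chains with controlled level drop, and Lemma~\ref{graph_tree} to turn a directed chain graph into a tree. However, you do not actually prove~(\ref{mvsmvkvskvms}), and the route you sketch breaks down at the step you flag as ``the main obstacle.'' Your per-chain estimate $l_*(m_{\mathrm{start}}-m_{\mathrm{end}})\ge |\sigma| - C_2$ telescopes across a decomposition of the ${\cal T}$-path from $v'$ to $v''$ into sub-chains $\sigma_1,\dots,\sigma_k$ only as $l_*(m_{v'}-m_{v''})\ge \rho(v',v'')-kC_2$, and $k$ --- the number of grafting points traversed --- is not a priori bounded independently of $m_{v'}-m_{v''}$. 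Applying Lemma~\ref{graph_tree} once to the whole graph $\Gamma_M$ with $k$ of order $M-m_0$ gives only a global depth bound; it gives no control of the local tree distance between an arbitrary comparable pair, so the ``consistent inductive extension'' that is supposed to rescue the telescoping is asserted but never constructed. That construction is precisely where the work lies.

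The paper avoids this by an intrinsically \emph{local} induction on Whitney levels, which changes the combinatorics in an essential way. At stage $n$ one only follows $\gamma_{x}$ \emph{until it first hits} the already-built region $\overline{\Omega}_{{\cal T}_{n-1},F_{n-1}}$; this produces chains of length $\le c_5(d,a)$ (independent of $n$), so Lemma~\ref{graph_tree} is invoked at each stage with a fixed, small parameter, and the attached subtrees have uniformly bounded depth (property~3 of the induction). Property~2 gives that all cubes in ${\cal T}_n$ lie within $c_*(d,a)$ levels of $\mu_n$. From these two uniform bounds, the proof of property~4 proceeds by a careful case split (whether $[v'',v']$ stays in high levels or drops below $\mu_n$), and there the number of ``layers'' a comparable pair can straddle is linearly tied to $m_{v'}-m_{v''}$, which is exactly what makes the telescoping close. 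Your version, which runs each chain all the way to $\Delta_0$ and then hopes to reconcile the pieces after the fact, does not have this layer-by-layer bookkeeping, and without it the global inequality does not follow. To repair the argument you would need to reproduce the stop-at-the-frontier truncation and the two uniform bounds it yields, at which point you are essentially redoing the paper's induction.
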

\begin{proof}
Index the cubes $\{\Delta _{i,j}\}_{j\in \Z_+, \, 1\le i\le r_j}$
from the set $\Theta(\Omega)$ so that $r_0=1$, ${\bf m}(\Delta
_{i,j})=\mu _j\in \Z_+$, $1\le i\le r_j$, and
\begin{align}
\label{mu0mu1}
\mu _0\le \mu _1, \;\; \mu _j<\mu _{j+1},\;\; j\in
\N.
\end{align}

Let $x_*$ be the center of $\Delta _{1,0}$. Since $\Omega \in {\bf
FC}(a)$, then there exists $b=b(a, \, d)$ such that for any $x\in
\Omega$ there exists a polygonal arc $\gamma _x:[0, \, T_0(x)]
\rightarrow\Omega$ with the following properties:

a) $\gamma _x\in AC[0, \, T_0(x)]$, $|\dot \gamma _x|=1$ a.e.;

b) $\gamma _x(0)=x$, $\gamma _x(T_0(x))=x_*$;

c) for any $t\in [0, \, T_0(x)]$ the inclusion
$B_{bt}(\gamma _x(t))\subset \Omega$ holds;

d) arcs of $\gamma _x$ are not parallel to any
$d-1$-dimensional coordinate planes and for each $\Delta \in
\Theta(\Omega)$ the set $\gamma _x([0, \, T_0(x)])$ does not intersect
$\Delta$ at any point of $k$-dimensional faces, $k\le
d-2$.

Construct by induction sequences of trees $\{{\cal
T}_n\}_{n\in \Z_+}$ rooted at $v_0$, families of cubes
$\Theta_n\subset \Theta(\Omega)$ and one-to-one mappings
$F_n:{\bf V}({\cal T}_n)\rightarrow \Theta_n$ consistent with
the structure of ${\cal T}_n$ and satisfying the following conditions:
\begin{enumerate}
\item ${\cal T}_n$ is a subtree of ${\cal T}_{n+1}$,
$F_{n+1}|_{{\bf V}({\cal T}_n)}=F_n$, $\Theta_n\supset \{\Delta
_{i,n}\}_{1\le i\le r_n}$, $n\in \N$, $F_0(v_0)=\Delta _{1,0}$;
\item there exists $c_*(d, \, a)\in \N$ such that for any vertex $v$
of the tree ${\cal T}_n$ the inequality $m_v\le \mu _n+c_*(d, \, a)$ holds;
\item there exists $c_{**}(d, \, a)\in \N$ such that if
$v , \, v'\in{\cal T}_n\backslash {\cal T}_{n-1}$, $v'>v$,
then $\rho(v', \, v)\le c_{**}(d, \, a)$;
\item ${\cal T}_n$ satisfies the condition of Lemma \ref{omega_t_cone} with
\begin{align}
\label{l_st} l_*=1+(c_{**}(d, \, a)+1)(c_*(d, \, a)+2),
\end{align}
\begin{align}
\label{k_st} k_*=l_*c_*(d, \, a)+(c_{**}(d, \, a)+1) (c_*(d, \,
a)+2).
\end{align}
\end{enumerate}

As ${\cal T}_0$ we take a tree consisting of the unique vertex
$v_0$; put $\Theta_0=\{\Delta _{1,0}\}$, $F_0(v_0)=\Delta _{1,0}$. The property 1
holds by construction, the property 2 follows from the equality
$m_{v_0}={\bf m}(\Delta_{1,0})=\mu_0$, properties 3 and 4 for $n=0$
are trivial.

Let $n\in \N$, and let the tree ${\cal T}_{n-1}$,
the family of cubes $\Theta_{n-1}$ and the mapping
$F_{n-1}$ satisfying the conditions 1--4 be defined.
Construct ${\cal T}_n$, $\Theta _n$ and $F_n$.

If $\{\Delta _{i,n}\}_{i=1}^{r_n}\subset \Theta_{n-1}$, then
put ${\cal T}_n={\cal T}_{n-1}$, $\Theta_n=\Theta_{n-1}$,
$F_n=F_{n-1}$. Let
$$
I_n:=\{i=1, \, \dots , \, r_n:\Delta _{i,n}\notin
\Theta_{n-1}\}\ne \varnothing .
$$
Consider $i\in I_n$. Denote by $x_{i,n}$ the center of the cube $\Delta
_{i,n}$ and set $\tau _{i,n}=T_0(x_{i,n})$, $\gamma _{i,n}=\gamma
_{x_{i,n}}$,
$$
t_{i,n}=\min \{t\in [0, \, \tau _{i,n}]:\gamma _{i,n}(t) \in
\overline{\Omega}_{{\cal T}_{n-1},F_{n-1}}\}.
$$
Then
\begin{align}
\label{tin} t_{i,n}<\tau_{i,n}.
\end{align}

Let $k=k(i)$, $\{Q_1^i, \, \dots , \, Q_k^i\}\subset
\Theta(\Omega)\backslash \Theta_{n-1}$ be the set of all cubes such that
$\gamma _{i,n}([0, \, t_{i,n}])\cap Q_j^i\ne
\varnothing$. Then
\begin{align}
\label{mqijge}
{\bf m}(Q_j^i)\ge \mu _n
\end{align}
by the property 1 of the tree ${\cal T}_{n-1}$ and by (\ref{mu0mu1}). On the other hand,
there exists $c_1(d, \, a)\in \N$ such that
\begin{align}
\label{mqijle} {\bf m}(Q_j^i)\le \mu _n+c_1(d, \, a).
\end{align}
Indeed, if $Q_j^i=\Delta _{i,n}$, then it follows from
the definition of $\mu_n$. If $Q_j^i\ne\Delta _{i,n}$, then for any
$t$ such that $\gamma _{i,n}(t)\in Q_j^i$ the inequality
$|x_{i,n}-\gamma_{i,n}(t)|\ge 2^{-\mu_n-1}$ holds. Therefore,
$t\underset{d}{\gtrsim}2^{-\mu _n}$. It remains to apply the
property c) of the polygonal arc $\gamma _{i,n}$ and Theorem \ref{whitney}.

Construct the sequence of different cubes
$Q_{j_1}^i, \, \dots , \, Q_{j_\nu}^i$, $\nu =\nu(i)$,
$1\le j_s\le k$, such that $Q_{j_1}^i=\Delta _{i,n}$,
$\gamma _{i,n}(t_{i,n})\in Q_{j_\nu}^i$ and $\dim (Q_{j_s}^i\cap Q^i_{j_{s+1}})=d-1$,
as well as the sequence $\tilde t_1<\dots <\tilde t_\nu$ such that
$\gamma_{i,n}(\tilde t_s)\in Q^i_{j_s}$, $1\le s\le \nu$.
Set
$$
\tilde t_0=0, \tilde t_1=\max \{t\in [0, \, t_{i,n}]:
\gamma _{i,n}(t)\in Q^i_{j_1}\}.
$$
Let the cubes $Q_{j_1}^i=\Delta _{i,n}$, $Q^i_{j_2}, \, \dots , \, Q^i_{j_s}$ and
numbers $\tilde t_0<\tilde t_1<\dots <\tilde t_s$ be constructed, with
\begin{align}
\label{til_t_sigma} \tilde t_\sigma=\max \{t\in [\tilde
t_{\sigma-1}, \, t_{i,n}]: \gamma _{i,n}(t)\in Q^i_{j_\sigma}\},
\;\; 1\le \sigma \le s.
\end{align}
If $\tilde t_s=t_{i,n}$, then the construction is completed. Suppose that
$\tilde t_s<t_{i,n}$. Denote by $J_s$ the set of indices $j'\in \{1, \, \dots , \, k\}
\backslash \{j_1, \, \dots , \, j_s\}$ such that
$$
\dim (Q_{j_s}^i\cap Q_{j'}^i)=d-1.
$$
Prove that $J_s\ne \varnothing$. Indeed, the property d) of the
polygonal arc $\gamma_{i,n}$ and (\ref{til_t_sigma}) imply that
$\gamma_{i,n}(\tilde t_s)\in {\rm int}_{d-1}(Q_{j_s}^i\cap Q)$ for
some $Q\in \Theta(\Omega)$, $Q\ne Q_{j_s}^i$. Moreover, $Q\notin
\cup _{\sigma =1}^{s-1}Q^i_{j_\sigma}$ by (\ref{til_t_sigma}).
Finally, from the definition of $t_{i,n}$ and from the condition
$\tilde t_s<t_{i,n}$ follows that $Q\notin \Theta_{n-1}$. Hence,
$Q\in \{Q^i_1, \, \dots, \, Q^i_k\}\backslash \{Q^i_{j_1}, \,
\dots, \, Q^i_{j_s}\}$.

Set $ \tilde t_{s+1}=\max\{t\in [\tilde t_s, \,
t_{i,n}]:\gamma _{i,n}(t) \in \cup _{j'\in J_s}Q^i_{j'}\}$. Define
$j_{s+1}\in J_s$ by the inclusion $\gamma _{i,n}(\tilde
t_{s+1})\in Q^i_{j_{s+1}}$. Show that $j_{s+1}$ is well-defined. Actually,
let $\gamma _{i,n}(\tilde t_{s+1})\in
Q_{j'}^i\cap Q_{j''}^i$, $j'$, $j''\in J_s$, $j'\ne j''$. The
property d) of $\gamma _{i,n}$ implies that $\gamma
_{i,n}(\tilde t_{s+1})\in {\rm int}_{d-1}(Q_{j'}^i\cap
Q_{j''}^i)$. By (\ref{tin}), we have $\tilde t_{s+1}<\tau_{i,n}$.
Therefore, there exists $\delta \in (0, \, \tau_{i,n}-\tilde
t_{s+1})$ such that $\gamma _{i,n}(\tilde t_{s+1}+\delta) \in {\rm
int}\, Q_{j'}^i\cup {\rm int}\, Q_{j''}^i$ (it follows again from
the property d) of $\gamma _{i,n}$). Hence, $\tilde
t_{s+1}<t_{i,n}$, and we get the contradiction with the definition of $\tilde
t_{s+1}$. Notice that $\tilde t_{s+1}=\max\{t\in [\tilde
t_s, \, t_{i,n}]:\gamma _{i,n}(t) \in Q^i_{j_{s+1}}\}$.

For each $s\in \overline{1, \, \nu-1}$ denote by $\tilde J_s$
the set of indices $j'\in \{1, \, \dots , \, k\}\backslash \{j_1, \, \dots
, \, j_{s+1}\}$ such that $Q_{j_{s+1}}^i\cap Q_{j'}^i\ne
\varnothing$. By (\ref{mqijge}) and (\ref{mqijle}) (or by
Theorem \ref{whitney}), there exists $c_2(d, \, a)\in \N$ such that
${\rm card}\, \tilde J_s\le c_2(d, \, a)$. Let us prove that for any
$s\in \N$ such that $\tilde t_{s+c_2(d, \, a)+2}<t_{i,n}$
the following inequality holds:
\begin{align}
\label{tijc21} \tilde t_{s+c_2(d, \, a)+3}-\tilde t_{s+1}\ge 2^{-\mu
_n-c_1(d, \, a)}.
\end{align}
Indeed, let $j_\sigma\in \tilde J_s$ for any $\sigma\in \{s+2, \,
\dots, \, s+c_2(d, \, a)+3\}$. Since all indices $j_\sigma$ are different,
we have ${\rm card}\, \tilde J_s\ge c_2(d, \, a)+1$, which leads
to a contradiction. Assume that there exists
$\sigma\in \{s+2, \, \dots, \, s+c_2(d, \, a)+3\}$ such that $j_\sigma \notin \tilde J_s$.
Then $Q^i_{j_{s+1}}\cap Q^i_{j_\sigma}=\varnothing$.
Since $\gamma_{i,n}(\tilde t_{s+1})\in Q^i_{j_{s+1}}$ and
$\gamma_{i,n}(\tilde t_\sigma)\in Q^i_{j_\sigma}$, then
$\tilde t_\sigma-\tilde t_{s+1}\ge |\gamma_{i,n}(\tilde t_\sigma)-
\gamma_{i,n}(\tilde t_{s+1})|\stackrel{(\ref{mqijle})}{\ge}
2^{-\mu_n-c_1(d, \, a)}$, which implies (\ref{tijc21}).

On the other hand, since $\gamma _{i,n}(t_{i,n})\in Q^i_{j_\nu}$,
then by (\ref{mqijge}) and Theorem \ref{whitney} there exists
$c_3(d)\in \N$ such that ${\rm dist}\, (\gamma _{i,n}(t_{i,n}),
\, \partial \Omega)\le 2^{-\mu _n+c_3(d)}$. Therefore,
$t_{i,n}\le 2^{-\mu _n +c_4(d, \, a)}$ for some $c_4(d, \, a)\in \N$ (see the property c) of
$\gamma _{i,n}$). This together with (\ref{tijc21}) yield that $\nu\le
c_5(d, \, a)$ for some $c_5(d, \, a)>0$.

Set $\Theta^{I_n}=\{Q^i_{j_s}, \; i\in I_n, \; 1\le s\le
\nu(i)\}$. Construct a directed graph ${\cal G}$ (without multiple
edges and loops) and a one-to-one mapping $\Phi :{\bf V}({\cal G})
\rightarrow \Theta^{I_n}$: we regard the vertices $v'$, $v''$ as
adjacent, regard $v''$ as the head of $(v'', \, v')$ and regard
$v'$ as the tail of $(v'', \, v')$ if and only if there exists
$i\in I_n$ and $s\in 1, \, \dots , \, \nu(i)-1$ such that
$\Phi(v')=Q^i_{j_s}$, $\Phi(v'')=Q^i_{j_{s+1}}$. Since
$\dim\left(Q^i_{j_s}\cap Q^i_{j_{s+1}}\right)=d-1$, then $\Phi$ is
consistent with the structure of ${\cal G}$.

Let $\{\Phi ^{-1}(Q^i_{j_{\nu(i)}})\}_{i\in I_n}=\{w_1, \, \,
\dots, \, w_{r'_n}\}$. Add to the graph ${\cal G}$ a vertex $\hat
v$ and connect it with vertices $w_{i'}$, $1\le i'\le r'_n$,
considering that $\hat v$ is the head of these edges. Thereby, we
obtain a directed graph $\hat{\cal G}$ such that $(\hat{\cal G},
\, \hat v) \in \mathfrak{G}_{c_5(d, \, a)}$ (see Definition
\ref{dgk}). By Lemma \ref{graph_tree}, there exists a tree ${\cal
T}'$ rooted at $\hat v$, which is a subgraph of $\hat{\cal G}$
such that ${\bf V}({\cal T}')={\bf V}(\hat{\cal G})$ and for any
$v\in {\bf V}({\cal T}')$ the following inequality holds:
\begin{align}
\label{rohvv} \rho(\hat v, \, v) \le c_5(d, \, a).
\end{align}

Since $\gamma _{i,n}(t_{i,n})\in Q^i_{j_{\nu(i)}}=\Phi(w_{i'})$ for some
$i'\in \{1, \, \dots, \, r_n'\}$, then the definition of
$t_{i,n}$ and the property d) of $\gamma _{i,n}$ yield that
there exists a vertex $u_{i'}\in {\bf V}({\cal T}_{n-1})$ and a cube
$\tilde{Q}_{i'}=F_{n-1}(u_{i'})$ such that $\dim (\tilde{Q}_{i'}\cap
\Phi(w_{i'}))=d-1$. Put
$$
{\cal T}_n={\bf J}({\cal T}_{n-1}, \, ({\cal T}')_{w_1}, \, \dots ,
\, ({\cal T}')_{w_{r_n'}}; u_1, \, w_1, \, \dots , \, u_{r_n'}, \,
w_{r_n'}),
$$
$$
F_n(v)=\left\{ \begin{array}{l} F_{n-1}(v), \, \text{ if }
v\in {\cal T}_{n-1}, \\
\Phi(v), \; \text{ if }v\in {\bf V}(({\cal T}')_{w_i}), \;\;
1\le i\le r'_n,
\end{array}\right .
$$
$\Theta_n=F_n({\bf V}({\cal T}_n))$. Then the mapping $F_n$
is bijective and consistent with the structure of ${\cal T}_n$ (it
follows from the construction, from the definition of the graph ${\cal G}$ and
from the induction assumption).

The properties 1--2 of ${\cal T}_n$ follow from the construction,
(\ref{mu0mu1}) and (\ref{mqijle}), the property 3 follows from (\ref{rohvv}).
Check the property 4. Let $v'$, $v''\in {\bf V}({\cal T}_n)$,
$v'>v''$. If $v'$, $v''\in {\bf V}({\cal T}_{n-1})$, then it holds by
the induction assumption. Let $v'\in {\bf
V}({\cal T}_n)\backslash {\bf V}({\cal T}_{n-1})$. If for any
$v\in [v'', \, v']$ the inequality $m_v\ge \mu _n$ holds, then $v''\in {\bf
V}({\cal T}_n)\backslash {\bf V}({\cal T}_{n-c_*(d,a)-2})$. Indeed,
if $v\in {\bf V}({\cal T}_{n-j})$ and $m_v\ge \mu _n$, then
$$
\mu _{n-j}+j-1\stackrel{(\ref{mu0mu1})}{\le} \mu
_{n-j+1}+j-1\stackrel{(\ref{mu0mu1})}{\le} \mu
_{n-j+2}+j-2\stackrel{(\ref{mu0mu1})}{\le} \dots
\stackrel{(\ref{mu0mu1})}{\le} \mu _n\le m_v\le \mu _{n-j}+c_*(d,
\, a)
$$
(the last inequality follows from the property 2 of the tree ${\cal T}_{n-j}$),
which implies $j\le 1+c_*(d, \, a)$. Hence, by the property 3 of the trees
${\cal T}_k$, $1\le k\le n$, we have
\begin{align}
\label{rovvstrih} \rho(v', \, v'')\le (c_{**}(d, \, a)+1)(c_*(d,
\, a)+2);
\end{align}
by the property 2 of ${\cal T}_n$, the inequality $m_{v'}-m_{v''}\ge
\mu_n-(\mu_n+c_*(d, \, a))=-c_*(d, \, a)$ holds, so
$$
l_*(m_{v'}-m_{v''})\ge -l_*c_*(d, \, a)\stackrel{(\ref{k_st})}{=}
(c_{**}(d, \, a)+1)(c_*(d, \, a)+2)-k_*\ge \rho(v', \, v'')-k_*.
$$

Assume now that $v'\in {\bf V}({\cal T}_n)\backslash {\bf V}({\cal T}_{n-1})$ and
$W:=\{v\in [v'', \, v']:m_v\le \mu _n-1\}\ne \varnothing$. Set
$v_*=\max W$. Then it follows from (\ref{mqijge}) that
\begin{align}
\label{vstn1}
v_*\in {\bf V}({\cal T}_{n-1}), \;\;\; v_*<v'.
\end{align}
Denote by $v_{**}$ the vertex in $[v_*, \, v']$ that is the successor of
$v_*$. By the inequality (\ref{rovvstrih})
applied to $v'':=v_{**}$, by (\ref{vstn1}) and by
the induction assumption we have
$$
l_*(m_{v'}-m_{v''})=l_*(m_{v'}-m_{v_*})+l_*(m_{v_*}-m_{v''})\ge
l_*+\rho(v_*, \, v'')-k_*=
$$
$$
=(l_*-1-\rho(v', \, v_{**}))+\rho(v', \, v'')-k_*\ge
$$
$$
\ge(l_*-1-(c_{**}(d, \, a)+1)(c_*(d, \, a)+2))+\rho(v', \,
v'')-k_*\stackrel{(\ref{l_st})}{=} \rho(v', \, v'')-k_*.
$$

It remains to take as ${\cal T}$ the tree which is obtained by countable
repeating of induction steps, and to define $F$ by $F|_{{\cal T}_n}=F_n$.
\end{proof}
\begin{Cor}
\label{cor_omega_t} Let $\Omega\subset (0, \, 1)^d$, $\Omega\in
{\bf FC}(a)$, let ${\cal T}$ and $F:{\bf V}({\cal T})\rightarrow
\Theta(\Omega)$ be the tree and the mapping constructed in Lemma
\ref{constr_omega_t}. Then for any subtree ${\cal T}'$ in ${\cal T}$
we have
\begin{align}
\label{fghjkl}
\Omega_{{\cal T}',F}\in {\bf FC}(b_*), \text{ where }b_*=b_*(a, \, d)>0.
\end{align}
Here
\begin{align}
\label{kratn} {\rm card}\, {\bf V}_1(v)\underset{d}{\lesssim}1,
\;\; v\in {\bf V}({\cal T})
\end{align}
(see the notation on the page \pageref{v1v}),
\begin{align}
\label{vol1} {\rm mes}\, \Omega_{{\cal
T'},F}\underset{a,d}{\asymp}{\rm mes}\, F(v), \text{ where }v\text{
is the minimal vertex in }{\cal T}'.
\end{align}
\end{Cor}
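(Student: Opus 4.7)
The proof of this corollary should be essentially a packaging of the preceding two lemmas and the Whitney covering theorem, with no new ideas required.

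The plan is to verify the three assertions (\ref{fghjkl}), (\ref{kratn}), (\ref{vol1}) by direct appeal to what has already been established. For (\ref{fghjkl}) and (\ref{vol1}), I would simply invoke Lemma \ref{omega_t_cone} applied to the tree $({\cal T}, v_0)$ and the mapping $F$ produced by Lemma \ref{constr_omega_t}. The hypothesis of Lemma \ref{omega_t_cone}, namely inequality (\ref{mvsmvkvskvms}) with some constants $k_*, l_* \in \N$, is precisely the conclusion of Lemma \ref{constr_omega_t}, where those constants depend only on $a$ and $d$. Consequently the constant $\hat a = \hat a(k_*, l_*, d)$ delivered by Lemma \ref{omega_t_cone} depends only on $a$ and $d$, so we may set $b_* = \hat a$. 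This yields (\ref{fghjkl}) directly, and the volume estimate from Lemma \ref{omega_t_cone} yields (\ref{vol1}) after observing that the minimal vertex of any subtree ${\cal T}'$ of ${\cal T}$ lies in ${\bf V}({\cal T})$ so that $F(v)$ is a well-defined Whitney cube.

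For (\ref{kratn}), I would argue from Whitney's theorem (Theorem \ref{whitney}) and the consistency of $F$ with the tree structure. By the Definition preceding Lemma \ref{omega_t_cone}, if $v' \in {\bf V}_1(v)$, then $v'$ is adjacent to $v$ in ${\cal T}$ and $F(v) \cap F(v')$ has dimension $d-1$. Since $F$ is a bijection onto a subset of $\Theta(\Omega)$, distinct children $v' \in {\bf V}_1(v)$ produce distinct Whitney cubes $F(v')$, each sharing a $(d-1)$-dimensional face with $F(v)$. The Whitney estimate (\ref{card_sopr}) then bounds the number of such cubes by $12^d$, so ${\rm card}\, {\bf V}_1(v) \le 12^d$.

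There is essentially no obstacle in this corollary; the entire burden of work has been carried by Lemma \ref{constr_omega_t} (constructing the tree with the required growth condition on $m_v$) and Lemma \ref{omega_t_cone} (deriving the flexible cone condition from that growth bound). The only minor point to check carefully is that the minimal vertex of an arbitrary subtree ${\cal T}'$ still inherits the estimates, but this is immediate because (\ref{mvsmvkvskvms}) is a pointwise condition on pairs of comparable vertices in ${\cal T}$, which passes trivially to any subtree.
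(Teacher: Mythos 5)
Your proposal is correct and follows exactly the same route as the paper's (very terse) proof: cite Lemma \ref{omega_t_cone} for (\ref{fghjkl}) and (\ref{vol1}), with the hypothesis (\ref{mvsmvkvskvms}) supplied by Lemma \ref{constr_omega_t} so that all constants depend only on $a$ and $d$, and derive (\ref{kratn}) from the Whitney bound (\ref{card_sopr}) via the consistency of $F$ with the tree structure. Your added explanation of why (\ref{kratn}) follows from (\ref{card_sopr}) fills in detail the paper leaves implicit, but there is no difference in substance.
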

\begin{proof}
Indeed, Lemma \ref{omega_t_cone} implies that
$$\Omega_{{\cal T}',F}\in {\bf FC}(\hat a(k_*(a, \, d), \, l_*(a, \, d), \, d));$$
(\ref{kratn}) and (\ref{vol1}) follow from (\ref{card_sopr}) and
(\ref{vol}), respectively.
\end{proof}

Let ${\cal T}_1, \, \dots , \, {\cal T}_l\in {\bf ST}({\cal
T})$, $\cup _{j=1}^l{\cal T}_j={\cal T}$ and ${\cal T}_i\cap {\cal
T}_j =\varnothing$ for any $i\ne j$. Then we call $\{{\cal T}_1, \,
\dots , \, {\cal T}_l\}$ a partition of the tree ${\cal
T}$. If $\mathfrak{S}$ is a partition of ${\cal T}$ and
${\cal A}\in {\bf ST}({\cal T})$, then put
$$
\mathfrak{S}|_{\cal A}=\{{\cal A}\cap {\cal T}':
{\cal T}'\in \mathfrak{S}, \; {\cal A}\cap {\cal T}'
\ne \varnothing\}.
$$

Let ${\cal T}$ be a tree, and let $\Psi :2^{{\bf V}(\cal T)}\rightarrow \R_+$.
Throughout, we denote $\Psi({\cal T}'):=\Psi({\bf V}({\cal T}'))$ for
${\cal T}'\in {\bf ST}({\cal T})$.

\begin{Lem}
\label{lemma_pro_sigma_t_g}
Let $({\cal T}, \, v_*)$ be a tree, and let
$\Psi :2^{{\bf V}(\cal T)}\rightarrow \R_+$ satisfy the following conditions:
\begin{align}
\label{prop_psi}
\Psi(V_1\cup V_2)\ge \Psi(V_1)+\Psi(V_2), \; V_1, \, V_2\subset
{\bf V}({\cal T}), \;\; V_1\cap V_2=\varnothing;
\end{align}
\begin{align}
\label{prop_psi1}
\text{if }\{v_n\}_{n\in \N}\subset {\bf V}({\cal T}),\;\;
v_1<\dots <v_n<\dots, \text{ then }\lim \limits _{n\rightarrow \infty}
\Psi({\cal T}_{v_n})=0.
\end{align}
Let $\gamma >0$, $\Psi({\cal T})>2\gamma$. Then there exists a unique vertex
$\hat v\in {\bf V}({\cal T})$ such that
\begin{align}
\label{pbfvtslbtvpv1}
\Psi({\cal T}_{\hat v})>\Psi({\cal T})-\gamma
\end{align}
and for any $v'\in {\bf V}_1(\hat v)$
\begin{align}
\label{tvsltg}
\Psi({\cal T}_{v'})\le \Psi({\cal T})-\gamma .
\end{align}
\end{Lem}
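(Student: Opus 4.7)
The plan is to construct $\hat v$ by a greedy descent from the root $v_*$ and to deduce uniqueness from superadditivity. First I would note that applying (\ref{prop_psi}) with $V_2=\varnothing$ gives $\Psi(\varnothing)=0$, and hence monotonicity: $V\subset V'$ implies $\Psi(V)\le \Psi(V')$. In particular, $v\le v'$ in ${\cal T}$ entails ${\bf V}({\cal T}_{v'})\subset {\bf V}({\cal T}_v)$, and therefore $\Psi({\cal T}_{v'})\le \Psi({\cal T}_v)$. This monotonicity is the only consequence of (\ref{prop_psi}) beyond superadditivity that the argument will need.

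For existence I would build inductively a strictly increasing chain $v_*=u_0<u_1<\dots$. Note first that $\Psi({\cal T}_{u_0})=\Psi({\cal T})>2\gamma>\Psi({\cal T})-\gamma$. Having chosen $u_n$ with $\Psi({\cal T}_{u_n})>\Psi({\cal T})-\gamma$, two cases arise: if $\Psi({\cal T}_{v'})\le \Psi({\cal T})-\gamma$ for every $v'\in {\bf V}_1(u_n)$, then $\hat v:=u_n$ satisfies (\ref{pbfvtslbtvpv1})--(\ref{tvsltg}) and we stop; otherwise there exists some $v'\in {\bf V}_1(u_n)$ with $\Psi({\cal T}_{v'})>\Psi({\cal T})-\gamma$, and I set $u_{n+1}:=v'$. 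If this procedure never terminates, it produces an infinite chain $u_0<u_1<\dots$ with $\Psi({\cal T}_{u_n})>\Psi({\cal T})-\gamma>\gamma>0$ for every $n$, contradicting (\ref{prop_psi1}). Hence the descent halts at some $\hat v$.

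For uniqueness, suppose $\hat v_1\ne \hat v_2$ both satisfy (\ref{pbfvtslbtvpv1})--(\ref{tvsltg}). If $\hat v_1$ and $\hat v_2$ are incomparable, then by the tree property recalled in the Notations section ${\bf V}({\cal T}_{\hat v_1})\cap {\bf V}({\cal T}_{\hat v_2})=\varnothing$, so (\ref{prop_psi}) combined with monotonicity yields
$$
\Psi({\cal T})\ge \Psi({\cal T}_{\hat v_1})+\Psi({\cal T}_{\hat v_2})>2(\Psi({\cal T})-\gamma),
$$
i.e.\ $\Psi({\cal T})<2\gamma$, contradicting the hypothesis. If instead $\hat v_1<\hat v_2$, let $w\in {\bf V}_1(\hat v_1)$ be the unique vertex lying on the path $[\hat v_1,\hat v_2]$; then $\hat v_2\ge w$, so by monotonicity $\Psi({\cal T}_w)\ge \Psi({\cal T}_{\hat v_2})>\Psi({\cal T})-\gamma$, contradicting (\ref{tvsltg}) applied at $\hat v_1$.

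The only delicate ingredient is termination of the descent, which is exactly what the vanishing hypothesis (\ref{prop_psi1}) is tailored to provide; the rest of the argument is bookkeeping around the superadditivity and monotonicity of $\Psi$, combined with the elementary incomparability property of subtrees of $({\cal T},v_*)$.
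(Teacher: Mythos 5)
Your proof is correct and follows essentially the same approach as the paper's: the paper defines the set $E=\{v:\Psi({\cal T}_v)>\Psi({\cal T})-\gamma\}$, shows it is a finite chain (via the same superadditivity and vanishing arguments you use), and takes $\hat v=\max E$, whereas you construct $\max E$ by greedy descent and then re-derive the chain property for uniqueness. The key ingredients — termination via (\ref{prop_psi1}), ruling out incomparable pairs via (\ref{prop_psi}), and monotonicity of $v\mapsto\Psi({\cal T}_v)$ along chains — are identical in both; only the bookkeeping order differs.
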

\begin{proof}
Denote
$$
E=\{v\in {\bf V}({\cal T}):\Psi({\cal T}_v)>\Psi({\cal T})-\gamma\}.
$$
Since $v_*\in E$, then $E\ne \varnothing$.
Show that $E$ is a finite chain. At first check that any two vertices in
$E$ are comparable. Indeed, let $v'$, $v''\in E$ be incomparable.
Then ${\cal T}_{v'}\cap {\cal T}_{v''}=\varnothing$ and
$$
\Psi({\cal T})\stackrel{(\ref{prop_psi})}{\ge}
\Psi({\cal T}_{v'})+\Psi({\cal T}_{v''})> 2\Psi({\cal T})-2\gamma ,
$$
i.e. $\Psi({\cal T})<2\gamma$. This contradicts the hypothesis of
Lemma. Therefore, $E$ is a chain. Prove that $E$ is finite. Indeed,
otherwise there exists a sequence $\{v_n\}_{n\in
\N}\subset E$ such that $v_1<\dots <v_n<\dots$ and $\Psi({\cal
T}_{v_n})>\Psi({\cal T})-\gamma\ge \gamma$. This contradicts to
(\ref{prop_psi1}).

As $\hat v$ we take the maximal vertex in $E$.
Since $E$ is a chain, the vertex satisfying
(\ref{pbfvtslbtvpv1}) and (\ref{tvsltg})
is unique.
\end{proof}
Let the conditions of Lemma \ref{lemma_pro_sigma_t_g} hold, let $\hat v$
be a vertex satisfying (\ref{pbfvtslbtvpv1}) and (\ref{tvsltg}).
Define the partition $\Sigma({\cal T}, \, \gamma)$ of the tree ${\cal
T}$ by
\begin{align}
\label{def_sigma_t_g} \Sigma({\cal T}, \, \gamma)=\left\{{\cal
T}\backslash {\cal T}_{\hat v}\right\}\cup \{\hat v\} \cup \{{\cal
T}_{v'}\}_{v'\in {\bf V}_1(\hat v)}.
\end{align}

Let $x\in \R$. Denote by $\lceil x\rceil$ the nearest integer to $x$
from above.

In the following Lemma we construct a special partition of a
tree. Notice that a similar partition of a metric tree was constructed in
\cite{solomyak}.
\begin{Lem}
\label{lemma_o_razb_dereva}
Let $k\in \N$. Then for any tree ${\cal T}$
rooted at $v_*$ such that
\begin{align}
\label{cardvvvk}
{\rm card}\, {\bf V}_1(v)\le k, \;\; v\in {\bf V}({\cal T}),
\end{align}
for any mapping $\Psi :2^{{\bf V}(\cal T)}\rightarrow \R_+$
satisfying (\ref{prop_psi}) and (\ref{prop_psi1}) and
for any $\gamma >0$ there exists a partition $\mathfrak{S}({\cal T}, \, \gamma)$
of ${\cal T}$ with the following properties:
\begin{enumerate}
\item let $u\in {\bf V}({\cal T})$, $\mathfrak{S}({\cal T}_u, \, \gamma)\subset
\mathfrak{S}({\cal T}, \, \gamma)$; if $\Psi({\cal T}_u)>
(k+1)\gamma$,
$$
\Sigma({\cal T}_u, \, \gamma)=\left\{{\cal T}_u\backslash {\cal
T}_{\hat v_u}\right\}\cup\{\hat v_u\} \cup \{{\cal
T}_{v'}\}_{v'\in {\bf V}_1(\hat v_u)},
$$
then
\begin{align}
\label{ind_s_t_g} \mathfrak{S}({\cal T}_u, \, \gamma)=\left\{{\cal
T}_u\backslash {\cal T}_{\hat v_u}\right\}\cup\{\hat v_u\} \bigcup
\left(\bigcup _{v'\in {\bf V}_1(\hat v_u)} \mathfrak{S}({\cal
T}_{v'}, \, \gamma)\right)
\end{align}
with $\Psi({\cal T}_{\hat v_u})>\Psi({\cal T}_u)-\gamma$,
$\Psi({\cal T}_u\backslash {\cal T}_{\hat v_u})<\gamma$; if
$\Psi({\cal T}_u)\le(k+1)\gamma$, then $\mathfrak{S}({\cal T}_u, \,
\gamma)=\{{\cal T}_u\}$;
\item if ${\cal T}'\in \mathfrak{S}({\cal T}, \, \gamma)$ and
$\Psi({\cal T}')>(k+2)\gamma$, then ${\rm card}\, {\bf V}({\cal T}')=1$;
\item if $\left\lceil\frac{\Psi({\cal T})}{\gamma}\right\rceil \ge k+2$, then
${\rm card}\, \mathfrak{S}({\cal T}, \, \gamma)\le
(k+2)\left\lceil\frac{\Psi({\cal T})}{\gamma}\right\rceil
-(k+1)(k+2)$, otherwise ${\rm card}\, \mathfrak{S}({\cal T}, \,
\gamma)=1$;
\item let $v>v_*$; then ${\rm card}\, \mathfrak{S}({\cal T}, \, \gamma)|_{{\cal T}_v}\le
(k+2)\left(\left\lceil\frac{\Psi({\cal T}_v)}{\gamma}\right\rceil
+1\right)$;
\item if ${\cal A}\in \mathfrak{S}({\cal T}, \, \gamma)$ and ${\rm card}\,
{\bf V}({\cal A})\ge 2$, then either ${\cal A}={\cal T}_v$ for
some $v\in {\bf V}({\cal T})$ or ${\cal A}={\cal
T}_v\backslash {\cal T}_w$ for some $v$, $w\in {\bf V}({\cal
T})$, $w>v$; here in the second case $\Psi({\cal A})<\gamma$ and $\Psi({\cal T}_w)
>\Psi({\cal T}_v)-\gamma$.
\end{enumerate}
\end{Lem}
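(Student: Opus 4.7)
The plan is to define $\mathfrak{S}({\cal T}, \gamma)$ by the recursion dictated by property~1 itself. On each subtree ${\cal T}_u$ one sets $\mathfrak{S}({\cal T}_u, \gamma) = \{{\cal T}_u\}$ if $\Psi({\cal T}_u) \le (k+1)\gamma$, and otherwise invokes Lemma~\ref{lemma_pro_sigma_t_g} to obtain the unique $\hat v_u$ satisfying (\ref{pbfvtslbtvpv1})--(\ref{tvsltg}), and then defines $\mathfrak{S}({\cal T}_u, \gamma)$ by (\ref{ind_s_t_g}). The recursion is well-founded: at every descent step $\lceil \Psi/\gamma \rceil$ strictly drops by (\ref{tvsltg}), and (\ref{prop_psi1}) rules out infinite descent. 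The inequality $\Psi({\cal T}_u \setminus {\cal T}_{\hat v_u}) < \gamma$ in property~1 follows from superadditivity (\ref{prop_psi}) combined with $\Psi({\cal T}_{\hat v_u}) > \Psi({\cal T}_u) - \gamma$. Property~5 is read off directly from the construction (elements are either singletons $\{\hat v_u\}$, top pieces ${\cal T}_u \setminus {\cal T}_{\hat v_u}$ with $\Psi < \gamma$, or terminal ${\cal T}_u$), and property~2 follows because any non-singleton element has either $\Psi < \gamma$ or $\Psi \le (k+1)\gamma$, in particular $\Psi < (k+2)\gamma$.

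The core estimate is property~3. I would prove by strong induction on $M := \lceil \Psi({\cal T})/\gamma \rceil$ the pair of bounds $N({\cal T}) := {\rm card}\, \mathfrak{S}({\cal T}, \gamma) \le 1$ for $M \le k+1$ and $N({\cal T}) \le (k+2)M - (k+1)(k+2)$ for $M \ge k+2$. The recursive step is $N({\cal T}) \le 2 + \sum_{v' \in {\bf V}_1(\hat v)} N({\cal T}_{v'})$, with at most $k$ terms by (\ref{cardvvvk}). Writing $M_{v'} := \lceil \Psi({\cal T}_{v'})/\gamma \rceil$, the two essential inputs are (i) $M_{v'} \le M - 1$ from (\ref{tvsltg}) and (ii) $\sum_{v'} \Psi({\cal T}_{v'}) \le \Psi({\cal T}_{\hat v}) \le \Psi({\cal T})$ from superadditivity, which together with $\lceil x \rceil \le x + 1$ give $\sum_{v' \text{ big}} M_{v'} \le M + b$, where a child is called \emph{big} when $M_{v'} \ge k+2$ and $b$ denotes their number. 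A three-case split on $b$ closes the induction: the case $b = 0$ gives $N \le 2 + k \le (k+2)M - (k+1)(k+2)$ for $M \ge k+2$; the case $b = 1$ uses only (i); the case $b \ge 2$ uses (i) and (ii), reducing to the elementary inequality $b \ge ((k+2)/(k+1))^2$, which holds since $((k+2)/(k+1))^2 < 2$ for $k \ge 2$ (the subcase $k = 1$ being vacuous since $b \le k$).

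Property~4 is then deduced by following the recursion defining $\mathfrak{S}({\cal T}, \gamma)$ through the chain of subtrees ${\cal T} = {\cal T}_{u_0} \supset {\cal T}_{u_1} \supset \dots$ containing $v$, terminating at the first level $\ell$ at which either $\Psi({\cal T}_{u_\ell}) \le (k+1)\gamma$ or $v \not> \hat v_{u_\ell}$. In each of the four possible configurations at termination (the relative position of $v$ and $\hat v_{u_\ell}$), at most two elements of $\mathfrak{S}({\cal T}, \gamma)$ straddle ${\cal T}_v$ and the remaining members of $\mathfrak{S}({\cal T}, \gamma)|_{{\cal T}_v}$ come from $\mathfrak{S}({\cal T}_{v'}, \gamma)$ for those children $v' \in {\bf V}_1(\hat v_{u_\ell})$ lying inside ${\cal T}_v$. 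Applying the refined big/small estimate behind property~3 together with $\sum_{v'} \Psi({\cal T}_{v'}) \le \Psi({\cal T}_v)$ yields ${\rm card}\, \mathfrak{S}({\cal T}, \gamma)|_{{\cal T}_v} \le (k+2) \Psi({\cal T}_v)/\gamma + (k+2) \le (k+2)(\lceil \Psi({\cal T}_v)/\gamma \rceil + 1)$ after the obvious simplification.

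The hard part is property~3: the naive recursion $N({\cal T}) \le 2 + k \max_{v'} N({\cal T}_{v'})$ would produce exponential rather than linear growth in $M$, so one is forced to exploit simultaneously the per-child drop $M_{v'} \le M - 1$ and the global mass budget $\sum_{v'} \Psi({\cal T}_{v'}) \le \Psi({\cal T})$. The case split on $b$ is precisely the device that balances these two inputs, and the specific constants $k+1$ and $k+2$ appearing in the bound emerge from the arithmetic of that balance; the same mechanism then drives property~4.
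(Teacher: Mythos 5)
Your proposal is correct and follows essentially the same approach as the paper: define $\mathfrak{S}$ by the recursion forced by property 1, obtain the base case from $\Psi({\cal T}_u)\le(k+1)\gamma$, and prove property 3 by strong induction with a three-way case split on the number of ``big'' children (your $b\ge 2$, $b=1$, $b=0$ correspond exactly to the paper's ${\rm card}\,{\bf V}'\ge 2$, $=1$, $=0$), using precisely the same two inputs — the per-child drop in $\lceil\Psi/\gamma\rceil$ from (\ref{tvsltg}) and the mass budget $\sum_{v'}\Psi({\cal T}_{v'})\le\Psi({\cal T})$ from (\ref{prop_psi}) together with (\ref{cardvvvk}). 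Your treatment of property 4 via unrolling the recursion along the chain of subtrees containing $v$ is the same argument the paper carries out implicitly in its inductive case analysis on the relation between $v$ and $\hat v$.
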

\begin{proof}
Let $m\in \Z _+$, and let ${\cal T}$ be a tree. We write $({\cal T},
\, \Psi)\in \mathfrak{R}_{m,\gamma}$ if $(m-1)\gamma
<\Psi({\cal T})\le m\gamma$.

If $v\in {\bf V}({\cal T})$,
then set $\mu(v)=\mu(v, \, {\cal T})=\left\lceil \frac{\Psi({\cal T}_v)}
{\gamma}\right\rceil$. Therefore,
$({\cal T}_v, \, \Psi)\in \mathfrak{R}_{\mu(v) ,\gamma}$.

Construct by induction on $m\in \Z_+$ partitions $\mathfrak{S}({\cal
T}, \, \gamma)$ for all ${\cal T}$ such that $({\cal T}, \,
\Psi)\in \mathfrak{R}_{m,\gamma}$. If $m\le k+1$, then set
$\mathfrak{S}({\cal T}, \, \gamma)=\{{\cal T}\}$. Let
\begin{align}
\label{mgek1}
m\ge k+1,
\end{align}
and let partitions $\mathfrak{S}({\cal T}, \, \gamma)$ satisfying
properties 1--5 be constructed for all $({\cal T}, \, \Psi)\in
\mathfrak{R}_{m',\gamma}$, $m'\le m$. Construct partitions for all
$({\cal T}, \, \Psi)\in \mathfrak{R}_{m+1,\gamma}$. Here
\begin{align}
\label{m1gpt} m\gamma <\Psi({\cal T})\le (m+1)\gamma .
\end{align}
Consider the partition $\Sigma({\cal T}, \, \gamma)$ defined by
(\ref{def_sigma_t_g}). Then
\begin{align}
\label{pbfvtslbtvpv}
\Psi({\cal T}\backslash{\cal T}_{\hat v})
\stackrel{(\ref{prop_psi})}{\le}
\Psi({\cal T})-\Psi({\cal T}_{\hat v})\stackrel{(\ref{pbfvtslbtvpv1})}{<}\gamma .
\end{align}
Since for any $v'\in {\bf V}_1(\hat v)$
\begin{align}
\label{mu1vvv} \Psi({\cal T}_{v'})\stackrel{(\ref{tvsltg})}{\le}
\Psi({\cal T})-\gamma \le m\gamma, \;\;\; \mu(v')=\mu(v', \, {\cal
T})\le m,
\end{align}
then by induction assumption the partition
$\mathfrak{S}({\cal T}_{v'}, \, \gamma)$ is defined. Set
\begin{align}
\label{mfs} \mathfrak{S}({\cal T}, \, \gamma)=\left\{{\cal
T}\backslash {\cal T}_{\hat v}\right\}\cup\{\hat v\} \bigcup
\left(\bigcup _{v'\in {\bf V}_1(\hat v)} \mathfrak{S}({\cal
T}_{v'}, \, \gamma)\right).
\end{align}
On this induction step we get that in the case
$\Psi({\cal T}_{\hat v})\in (m\gamma, \, (m+1)\gamma]$
\begin{align}
\label{s_t_g} \text{if }\Sigma({\cal T}_{\hat v}, \, \gamma)=\{\hat
v\}\cup \{{\cal T}_{v'}\}_{v'\in {\bf V}_1(\hat v)}, \text{ then }
\mathfrak{S}({\cal T}_{\hat v}, \, \gamma)=\{\hat
v\} \bigcup \left(\bigcup _{v'\in {\bf V}_1(\hat v)}
\mathfrak{S}({\cal T}_{v'}, \, \gamma)\right),
\end{align}
\begin{align}
\label{llll}
\text{ if } {\cal T}_{\hat v}\backslash {\cal T}_w\in \Sigma
({\cal T}_{\hat v}, \, \gamma)\text{ for some }w>\hat v,
\text{ then }{\cal T}_{\hat v}\backslash {\cal T}_w\in \mathfrak{S}
({\cal T}_{\hat v}, \, \gamma).
\end{align}
By the induction assumption, for the case $\Psi({\cal T}_{\hat
v})\le m\gamma$ (\ref{s_t_g}) and (\ref{llll}) hold as well.

Prove the property 1. If $u=v_*$, then the assertion follows from
the construction and (\ref{pbfvtslbtvpv}). If $u>v_*$, then by
(\ref{mfs}) we get that ${\cal T}_u\subset {\cal T}_{v'}$ for some
$v'\in {\bf V}_1(\hat v)$ or $u=\hat v$. In the first case the
property 1 holds by the induction assumption. Consider the second
case. If $\Psi({\cal T}_{\hat v})\le (k+1)\gamma$, then
$\mathfrak{S}({\cal T}_{\hat v}, \, \gamma)= \{{\cal T}_{\hat
v}\}$ by construction (see the base of induction). Hence,
$\mathfrak{S}({\cal T}_{\hat v}, \, \gamma)
\stackrel{(\ref{mfs})}{\not \subset} \mathfrak{S}({\cal T}, \,
\gamma)$. Let $\Psi({\cal T}_{\hat v})>(k+1)\gamma$. Then
(\ref{llll}), (\ref{mfs}) and the inclusion $\mathfrak{S}({\cal
T}_{\hat v}, \, \gamma)= \mathfrak{S}({\cal T}_u, \,
\gamma)\subset \mathfrak{S}({\cal T}, \, \gamma)$ imply that
$\Sigma({\cal T}_{\hat v}, \, \gamma)=\{\hat v\}\cup \{{\cal
T}_{v'}\}_{v'\in {\bf V}_1(\hat v)}$. Therefore, (\ref{ind_s_t_g})
follows from (\ref{s_t_g}).

The property 2 follows from (\ref{pbfvtslbtvpv}), (\ref{mfs})
and the induction assumption.

Prove the property 3. By definition of $\mu(v')$,
$$
\sum \limits _{v'\in {\bf V}_1(\hat v)} \mu(v')\gamma \le\sum
\limits _{v'\in {\bf V}_1(\hat v)} (\Psi({\cal
T}_{v'})+\gamma)\stackrel{(\ref{prop_psi}), \,
(\ref{cardvvvk})}{\le} \Psi({\cal
T})+k\gamma\stackrel{(\ref{m1gpt})}{\le} (m+1)\gamma +k\gamma ,
$$
that is
\begin{align}
\label{slimvinv1vmu1vm1k}
\sum \limits _{v'\in {\bf V}_1(\hat v)} \mu(v')\le m+1+k.
\end{align}
Set ${\bf V}'=\{v'\in {\bf V}_1(\hat v):\mu(v')\ge k+2\}$,
${\bf V}''=\{v'\in {\bf V}_1(\hat v):\mu(v')< k+2\}$.
If ${\rm card}\, {\bf V}'\ge 2$, then by the induction assumption
$$
{\rm card}\, \mathfrak{S}({\cal T}, \, \gamma)\stackrel{(\ref{mfs})}{\le} 2+\sum \limits
_{v'\in {\bf V}'} ((k+2)\cdot \mu(v')-(k+1)(k+2))+ {\rm card}\,
{\bf V}''\stackrel{(\ref{slimvinv1vmu1vm1k})}{\le}
$$
$$
\le 2-2(k+1)(k+2)+(k+2)(m+1+k)+k=(k+2)(m+1)-(k+1)(k+2).
$$
If ${\rm card}\, {\bf V}'=1$, then by the induction assumption
$$
{\rm card}\, \mathfrak{S}({\cal T}, \, \gamma)\stackrel{(\ref{mu1vvv}),(\ref{mfs})}{\le}
2+(k+2)m-(k+1)(k+2)+k=(k+2)(m+1)-(k+1)(k+2).
$$
If ${\rm card}\, {\bf V}'=0$, then
$$
{\rm card}\, \mathfrak{S}({\cal T}, \, \gamma)\le 2+k=(k+2)(k+2)-(k+1)(k+2)
\stackrel{(\ref{mgek1})}{\le}(k+2)(m+1)-(k+1)(k+2).
$$

Prove the property 4. If $v>\hat v$, then ${\cal T}_v\subset {\cal
T}_{v'}$ for some $v'\in {\bf V}_1(\hat v)$, and the assertion
follows from the induction assumption. If $v$, $\hat v$ are incomparable,
then ${\cal T}_v\subset {\cal T}\backslash {\cal T}_{\hat v}$ and
${\rm card}\, \mathfrak{S}({\cal T}, \, \gamma)|_{{\cal T}_v}=1$.
If $v\le \hat v$, then ${\rm card}\, \mathfrak{S}({\cal T}, \,
\gamma)|_{{\cal T}_v}\le {\rm card}\, \mathfrak{S}({\cal T}, \,
\gamma)$, and the assertion follows from the property 3 (which is already proved) and
from inequalities $\Psi({\cal T}_v)\ge \Psi({\cal T}_{\hat
v})\stackrel{(\ref{pbfvtslbtvpv1})}{>} \Psi({\cal T})-\gamma$.

Prove the property 5. Since ${\rm card}\, {\bf V}({\cal A})>1$, then
by (\ref{mfs}) there are two alternatives: a) ${\cal A}\in
\mathfrak{S}({\cal T}_{v'}, \, \gamma)$ for some $v'\in {\bf
V}_1(\hat v)$ (then the property 5 holds by the induction assumption);
b) ${\cal A}={\cal T}\backslash {\cal T}_{\hat v}$
(then the property 5 follows from this equality and
(\ref{pbfvtslbtvpv})).
\end{proof}
\begin{Lem}
\label{kol_el_peres} Let conditions of Lemma
\ref{lemma_o_razb_dereva} hold. Then there exists $c(k)>0$ such that
for any $\gamma >0$ and
\begin{align}
\label{gammasgegamma2}
\gamma '\ge \frac{\gamma}{2}
\end{align}
each element of the partition $\mathfrak{S}({\cal T}, \, \gamma)$ intersects with
no more than $c(k)$ elements of $\mathfrak{S}({\cal T}, \, \gamma ')$.
\end{Lem}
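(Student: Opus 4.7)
The plan is to fix ${\cal A}\in\mathfrak{S}({\cal T},\gamma)$ and bound
$$
N({\cal A})={\rm card}\{{\cal B}\in\mathfrak{S}({\cal T},\gamma'):{\cal B}\cap{\cal A}\ne\varnothing\}
$$
by cases according to the structural description of ${\cal A}$ supplied by property~5 of Lemma~\ref{lemma_o_razb_dereva}. The cases ${\rm card}\,{\bf V}({\cal A})=1$ (trivially $N=1$) and ${\cal A}={\cal T}_v$ are easy: in the second, property~2 gives $\Psi({\cal A})\le(k+2)\gamma$, so by \eqref{gammasgegamma2} $\Psi({\cal T}_v)/\gamma'\le 2(k+2)$, and applying property~4 to $\mathfrak{S}({\cal T},\gamma')$ (or property~3 when $v=v_*$) bounds $N({\cal A})$ by $(k+2)(2(k+2)+2)$.

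In the remaining case ${\cal A}={\cal T}_v\setminus{\cal T}_w$, property~5 provides the sharper bound $\Psi({\cal A})<\gamma\le 2\gamma'$, while $\Psi({\cal T}_v)$ itself may be arbitrarily large, so a direct application of property~4 to ${\cal T}_v$ is much too weak. I would split the intersecting elements ${\cal B}$ into (i) those containing $v$, (ii) those containing $w$, and (iii) those with ${\cal B}\subset{\cal A}$. The first two classes contain at most one element each by the following connectedness argument: if a subtree ${\cal B}$ has a vertex in ${\cal A}$ and a vertex outside ${\cal T}_v$ (respectively inside ${\cal T}_w$), then the unique path in ${\cal B}$ joining them must cross $v$ (respectively $w$).

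To handle class~(iii) I decompose ${\cal A}$ along the unique path $v=u_0<u_1<\dots<u_l$ from $v$ to the parent of $w$, separating the spine $\{u_0,\dots,u_l\}$ from the side subtrees ${\cal T}_{v'}$ with $v'\in{\bf V}_1(u_i)\setminus\{u_{i+1}\}$. By \eqref{cardvvvk} each spine vertex has at most $k-1$ side children, and every side subtree satisfies $\Psi({\cal T}_{v'})\le\Psi({\cal A})<2\gamma'$, whence property~4 yields ${\rm card}\,\mathfrak{S}({\cal T},\gamma')|_{{\cal T}_{v'}}\le(k+2)(2+1)=3(k+2)$. Elements of $\mathfrak{S}({\cal T},\gamma')$ contained in ${\cal A}$ therefore break into (a) elements inside individual side subtrees, and (b) residuals and singletons produced by $\gamma'$-recursion calls on spine subtrees ${\cal T}_{u_i}$.

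The main obstacle is bounding the aggregate number of contributions from (a) and (b) by a constant depending only on $k$, since neither the spine length $l$ nor the number of side subtrees is directly controlled by $\Psi({\cal A})$. I would use the inclusion--exclusion
$$
{\rm card}\{{\cal B}\in\mathfrak{S}({\cal T},\gamma'):{\cal B}\subset{\cal A}\}\le{\rm card}\,\mathfrak{S}({\cal T},\gamma')|_{{\cal T}_v}-{\rm card}\,\mathfrak{S}({\cal T},\gamma')|_{{\cal T}_w}+1,
$$
together with a refinement of property~4 that tracks the common $\gamma'$-recursion tree on ${\cal T}_v$ and ${\cal T}_w$ simultaneously: every excess contribution to ${\cal T}_v$ over ${\cal T}_w$ arises from a recursion call whose $\hat v$ lies in ${\cal A}$, and the descent inequality $\Psi({\cal T}_{\hat v})>\Psi({\cal T}_u)-\gamma'$ combined with the total mass bound $\Psi({\cal A})<2\gamma'$ restricts the number of such calls to $O(k)$. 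Assembling the contributions from (i)--(iii) yields the desired constant $c(k)$, polynomial in $k$.
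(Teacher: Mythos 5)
Your reduction is fine as far as it goes: the split into the case $\tilde{\cal T}={\cal T}_v$ (handled by properties~2--4) and $\tilde{\cal T}={\cal T}_v\setminus{\cal T}_w$ matches the paper, and your further split of intersecting elements ${\cal B}$ into (i) those containing $v$, (ii) those containing $w$, and (iii) those with ${\cal B}\subset{\cal A}$ is exhaustive by the connectedness argument you cite (at most one element in each of (i), (ii) since $\mathfrak{S}({\cal T},\gamma')$ is a partition). The inclusion--exclusion bound
$${\rm card}\{{\cal B}\subset{\cal A}\}\le{\rm card}\,\mathfrak{S}({\cal T},\gamma')|_{{\cal T}_v}-{\rm card}\,\mathfrak{S}({\cal T},\gamma')|_{{\cal T}_w}$$
is also correct, since every element meeting ${\cal T}_w$ meets ${\cal T}_v$, and no element ${\cal B}\subset{\cal A}$ meets ${\cal T}_w$.

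The genuine gap is what you call the ``refinement of property~4.'' Property~4 bounds each side of the difference by $(k+2)(\lceil\Psi(\cdot)/\gamma'\rceil+1)$, but both $\Psi({\cal T}_v)/\gamma'$ and $\Psi({\cal T}_w)/\gamma'$ can be arbitrarily large, and property~4 does not control their difference: the $\gamma'$-partition restricted to ${\cal T}_v$ and to ${\cal T}_w$ comes from a chain of recursion calls that descend past $w$, and nothing in Lemma~\ref{lemma_o_razb_dereva} says the two restrictions differ by $O(k)$ elements just because $\Psi({\cal T}_v)-\Psi({\cal T}_w)\le\Psi({\cal A})<\gamma\le 2\gamma'$. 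Your one-sentence justification --- ``every excess contribution arises from a recursion call whose $\hat v$ lies in ${\cal A}$, and the descent inequality plus the mass bound restricts the number of such calls to $O(k)$'' --- is a plausible heuristic but not a proof: if $\hat v_u\in{\cal A}$ and $\hat v_u<w$, then ${\cal T}_{\hat v_u}\supset{\cal T}_w$ and its mass is \emph{not} controlled by $\Psi({\cal A})$, so the mass bound alone does not cut off the recursion. Separating the cases $\hat v_u<w$, $\hat v_u$ incomparable with $w$, etc., and showing the recursion can produce excess only a bounded number of times is precisely the content of the paper's Assertion~$(\star)$: a map $\varphi(u)>u$ with $\Psi({\cal T}_{\varphi(u)})\le\Psi({\cal T}_u)-\gamma'$, $l_u\le k+1+l_{\varphi(u)}$, and $\varphi(u)<w$ whenever $l_{\varphi(u)}>1$, from which two applications of $\varphi$ and the inequality $\Psi({\cal T}_w)>\Psi({\cal T}_v)-\gamma$ (property~5) yield the contradiction $\Psi({\cal T}_v)-\gamma<\Psi({\cal T}_w)\le\Psi({\cal T}_{\varphi(\varphi(u'))})\le\Psi({\cal T}_{u'})-\gamma\le\Psi({\cal T}_v)-\gamma$ that stops the descent. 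In short, your framework is compatible with the result, but the key quantitative step is missing; filling it in would require essentially re-proving the paper's descent lemma, and the paper's route of bounding the count directly (rather than via a difference of two potentially large cardinalities) is cleaner.
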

\begin{proof}
Let $\tilde{\cal T}$ be an element of $\mathfrak{S}({\cal T}, \,
\gamma)$, let $l$ be the number of elements of $\mathfrak{S}({\cal T}, \,
\gamma ')$ that intersect with $\tilde{\cal T}$. Suppose that
$l>1$. Then $\tilde{\cal T}$ contains at least two
vertices. By Lemma \ref{lemma_o_razb_dereva} (see property 5), there are
two cases.

{\it Case 1.} Let $\tilde{\cal T}={\cal T}_v$ for some
$v\in {\cal T}$. Apply Lemma \ref{lemma_o_razb_dereva}. By
property 2, $\Psi(\tilde{\cal T})\le (k+2)\gamma$. Hence, by
properties 3 and 4,
$$
l={\rm card}\, \mathfrak{S}({\cal T}, \, \gamma ')|_{{\cal T}_v}
\le (k+2)\left(\left\lceil(k+2)\frac{\gamma}{\gamma
'}\right\rceil+1\right)
\stackrel{(\ref{gammasgegamma2})}{\le}(k+2)(2k+5).
$$

{\it Case 2.} Let $\tilde{\cal T}={\cal T}_v\backslash {\cal
T}_w$, $v$, $w\in {\bf V}({\cal T})$, $w>v$, and
\begin{align}
\label{tvtwg} \Psi({\cal T}_v\backslash {\cal T}_w)<\gamma.
\end{align}
Put
$$
\tilde E=\{u\in {\bf V}({\cal T}):{\cal T}_v\subset {\cal T}_u, \;\;
\mathfrak{S}({\cal T}_u, \, \gamma ')\subset \mathfrak{S}({\cal T}, \, \gamma ')\}.
$$
This set is nonempty, since it contains $v_*$. Further, $\tilde E$
is a chain. Indeed, if vertices $u_1$ and $u_2$ are incomparable, then
${\cal T}_{u_1}$ and ${\cal T}_{u_2}$ do not intersect. Finally,
$\rho(v_*, \, v)\ge \rho(v_*, \, u)$ for any $u\in \tilde E$.
Therefore, $\tilde E$ contains the maximal element $u_0$.

Since $l>1$, then by Lemma \ref{lemma_o_razb_dereva} (see the property 1),
there exists a vertex $\hat u\ge u_0$ such that
\begin{align}
\label{stu0gs} \mathfrak{S}({\cal T}_{u_0}, \, \gamma ')=\{{\cal
T}_{u_0} \backslash {\cal T}_{\hat u}\}\cup \{\hat u\}\bigcup
\left(\bigcup _{v'\in {\bf V}_1(\hat u)} \mathfrak{S}({\cal
T}_{v'}, \, \gamma ')\right).
\end{align}
Show that
\begin{align}
\label{vlehatu}
v\le \hat u.
\end{align}
Indeed, if $v$ and $\hat u$ are incomparable, then ${\cal
T}_v\subset {\cal T}_{u_0} \backslash {\cal T}_{\hat u}$, and
$$
l={\rm card}\, \mathfrak{S}({\cal T}, \, \gamma')|_{\tilde{\cal
T}}\le {\rm card}\, \mathfrak{S}({\cal T}, \, \gamma')|_{{\cal
T}_v}={\rm card}\, \mathfrak{S}({\cal T}_{u_0}, \,
\gamma')|_{{\cal T}_v}=1.
$$
If $v>\hat u$, then $v\in {\cal T}_{v'}$ for some $v'\in
{\bf V}_1(\hat u)$, that is $v'\in \tilde E$ by (\ref{stu0gs})
and $v'>u_0$. This contradicts the fact that $u_0=\max \tilde E$.

Let $u\in {\bf V}({\cal T})$, $\mathfrak{S}({\cal T}_u, \, \gamma ')
\subset \mathfrak{S}({\cal T}, \, \gamma ')$, $u\ge v$. Denote by $l_u$ the number
of elements of the partition $\mathfrak{S}({\cal T}_u, \, \gamma ')$ that intersect
with ${\cal T}_v\backslash {\cal T}_w$.

Set
\begin{align}
\label{def_u} {\cal U}=\{u\ge v: \; \mathfrak{S}({\cal T}_u, \,
\gamma ') \subset \mathfrak{S}({\cal T}, \, \gamma '), \; l_u>1\}.
\end{align}

{\bf Assertion $(\star)$}. There exists a mapping $\varphi:{\cal U}\rightarrow {\bf V}({\cal
T})$ such that $\varphi(u)>u$ for any $u\in {\cal U}$,
\begin{align}
\label{llvv1u} \mathfrak{S}({\cal T}_{\varphi(u)}, \, \gamma
')\subset \mathfrak{S}({\cal T}_{u}, \, \gamma '), \;\; \Psi({\cal
T}_{\varphi(u)}) \le \Psi({\cal T}_u)-\gamma ' \text{ and } l_u\le
k+1+l_{\varphi(u)},
\end{align}
and if $l_{\varphi(u)}>1$, then
\begin{align}
\label{vsv} \varphi(u)<w.
\end{align}

{\it Proof of Assertion $(\star)$}. By Lemma
\ref{lemma_o_razb_dereva} (see the property 1), for any $u\in {\cal
U}$ there exists a vertex $\tilde u\ge u$ such that
$$
\mathfrak{S}({\cal T}_u, \, \gamma ')=\{{\cal T}_u \backslash
{\cal T}_{\tilde u}\}\cup \{\tilde u\}\bigcup \left(\bigcup
_{v'\in {\bf V}_1(\tilde u)} \mathfrak{S}({\cal T}_{v'}, \, \gamma
')\right)
$$
with $\Psi({\cal T}_{v'})\le \Psi({\cal T}_u)-\gamma'$ for any
$v'\in {\bf V}_1(\tilde u)$. Hence, $l_u\le 2+\sum \limits
_{v'\in {\bf V}_1(\tilde u)}l_{v'}$. In order to satisfy
first two expressions in (\ref{llvv1u}), it is sufficient to choose
$\varphi(u)\in {\bf V}_1(\tilde u)$.

Prove that if $l_{v'}>1$, then $v'<w$. If $v'\ge w$, then
$l_{v'}=0$. If $v'$ and $w$ are incomparable, then ${\cal T}_{v'}\subset
{\cal T}_v\backslash {\cal T}_w$ (since
$v'>u\stackrel{(\ref{def_u})}{\ge} v$ and ${\cal T}_w\cap {\cal
T}_{v'}=\varnothing$). Then
$$
\Psi({\cal T}_{v'})\le\Psi({\cal T}_v\backslash {\cal
T}_w)\stackrel{(\ref{tvtwg})}{<}\gamma
\stackrel{(\ref{gammasgegamma2})}{\le}2\gamma '.
$$
Therefore, $l_{v'}\le {\rm card}\,
\mathfrak{S}({\cal T}_{v'}, \, \gamma')=1$ (see the property 3
in Lemma \ref{lemma_o_razb_dereva}).

Set $A_u=\{v' \in {\bf V}_1(\tilde u):l_{v'}>1\}$.  From ${\rm
card}\, \{v' \in {\bf V}_1(\tilde u):v'<w\}\le 1$ follows that
${\rm card}\, A_u\le 1$. If ${\rm card}\, A_u=1$, then we take as
$\varphi(u)$ an element of $A_u$ (then (\ref{vsv}) holds). If
$A_u=\varnothing$, then we take as $\varphi(u)$ an arbitrary
element of the set ${\bf V}_1(\tilde u)$. In both cases the last
inequality in (\ref{llvv1u}) holds. This completes the proof of
the Assertion ($\star$).

From (\ref{stu0gs}) follows that $\mathfrak{S}({\cal T}_{u'}, \,
\gamma')\subset \mathfrak{S}({\cal T}, \, \gamma')$ for any $u'\in
{\bf V}_1(\hat u)$ and $u'>\hat
u\stackrel{(\ref{vlehatu})}{\ge}v$. Hence, if $l_{u'}>1$, then
$u'\in {\cal U}$.

By (\ref{stu0gs}) and the condition $u_0\in \tilde E$, we have $l\le 2+\sum
\limits _{u'\in {\bf V}_1(\hat u)}l_{u'}$. Estimate $l_{u'}$ for each
$u'\in {\bf V}_1(\hat u)$ such that $l_{u'}>1$. For this we use (\ref{llvv1u}).
If $l_{\varphi(u')}\le 1$,
then $l_{u'}\le k+2$. Let $l_{\varphi(u')}>1$. Then
$\varphi(u')\in {\cal U}$. Applying (\ref{llvv1u}) once again,
we get $l_{u'}\le k+1+k+1+l_{\varphi(\varphi(u'))}$ with
$$
\Psi({\cal
T}_{\varphi(\varphi(u'))})\stackrel{(\ref{llvv1u})}{\le}
\Psi({\cal T}_{\varphi(u')})-\gamma
'\stackrel{(\ref{llvv1u})}{\le} \Psi({\cal T}_{u'})-2\gamma
'\stackrel{(\ref{gammasgegamma2})}{\le} \Psi({\cal T}_{u'})-
\gamma.
$$
If $l_{\varphi(\varphi(u'))}>1$, then
$\varphi(\varphi(u'))\stackrel{(\ref{vsv})}{<}w$, and $\Psi({\cal
T}_w) \le\Psi({\cal T}_{\varphi(\varphi(u'))})$. On the other hand,
by Lemma \ref{lemma_o_razb_dereva} (see the property 5), we have $\Psi({\cal
T}_w)> \Psi({\cal T}_v)-\gamma$. Taking into account the condition $u'>\hat
u\stackrel{(\ref{vlehatu})}{\ge} v$, we get the contradictory chain of inequalities
$$
\Psi({\cal T}_v)-\gamma <\Psi({\cal T}_w) \le\Psi({\cal
T}_{\varphi(\varphi(u'))})\le \Psi({\cal T}_{u'})-\gamma \le
\Psi({\cal T}_v)-\gamma .
$$
Hence, $l_{\varphi(\varphi(u'))}\le 1$, $l_{u'}\le 2k+3$ and $l\le
2+k(2k+3)$.
\end{proof}
\begin{Cor}
\label{lemma_o_razb_dereva1} Let conditions of Lemma
\ref{lemma_o_razb_dereva} hold, and let $\Psi({\cal T})>0$. Then there exists
a number $C(k)>0$ such that for any $n\in \N$ there exists
a partition $\mathfrak{S}_n=\mathfrak{S}({\cal T}, \, \Psi({\cal
T})/n)$ of the tree ${\cal T}$ into at most $C(k)n$ subtrees
${\cal T}_j$ such that $\Psi({\cal T}_j)\le \frac{(k+2)\Psi({\cal
T})}{n}$ for any $j$ that satisfies the condition ${\rm
card}\, {\bf V}({\cal T}_j)\ge 2$. In addition, there exists $C_1(k)>0$ such that
if $m\le 2n$, then each element $\mathfrak{S}_n$ intersects
with at most $C_1(k)$ elements of $\mathfrak{S}_m$.
\end{Cor}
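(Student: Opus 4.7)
My plan is to obtain this corollary as a direct consequence of Lemma \ref{lemma_o_razb_dereva} and Lemma \ref{kol_el_peres}, specialized to the particular choice $\gamma = \Psi(\mathcal{T})/n$. The construction of $\mathfrak{S}_n$ is already given: simply set $\mathfrak{S}_n := \mathfrak{S}(\mathcal{T}, \Psi(\mathcal{T})/n)$, which is well-defined since $\Psi(\mathcal{T}) > 0$ makes $\gamma > 0$, and all the hypotheses on $\Psi$ transfer without modification. Thus there is nothing new to build.

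For the cardinality estimate, I would split into two cases according to whether $n \ge k+2$ or $n < k+2$. In the first case, $\lceil \Psi(\mathcal{T})/\gamma \rceil = n \ge k+2$, so property 3 of Lemma \ref{lemma_o_razb_dereva} gives $\operatorname{card} \mathfrak{S}_n \le (k+2) n - (k+1)(k+2) \le (k+2) n$. In the second case, $\operatorname{card} \mathfrak{S}_n = 1 \le (k+2) n$ trivially. So $C(k) = k+2$ works. For the size bound on the pieces, I would read property 2 of Lemma \ref{lemma_o_razb_dereva} contrapositively: any $\mathcal{T}_j \in \mathfrak{S}_n$ with $\operatorname{card} \mathbf{V}(\mathcal{T}_j) \ge 2$ must satisfy $\Psi(\mathcal{T}_j) \le (k+2)\gamma = (k+2)\Psi(\mathcal{T})/n$, which is exactly the bound claimed.

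For the intersection estimate, I would set $\gamma = \Psi(\mathcal{T})/n$ and $\gamma' = \Psi(\mathcal{T})/m$. The hypothesis $m \le 2n$ is precisely equivalent to $\gamma' \ge \gamma/2$, i.e. condition (\ref{gammasgegamma2}) of Lemma \ref{kol_el_peres}. That lemma then asserts that each element of $\mathfrak{S}(\mathcal{T}, \gamma) = \mathfrak{S}_n$ meets at most $c(k)$ elements of $\mathfrak{S}(\mathcal{T}, \gamma') = \mathfrak{S}_m$, so it suffices to take $C_1(k) := c(k)$.

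There is really no substantive obstacle: the whole content of the corollary has been absorbed into the two preceding lemmas, and the only thing to verify is that the scaling $\gamma_n = \Psi(\mathcal{T})/n$, $\gamma_m = \Psi(\mathcal{T})/m$ with $m \le 2n$ turns condition (\ref{gammasgegamma2}) into the desired regime. Hence the proof should fit in a short paragraph that names the constants and cites Lemma \ref{lemma_o_razb_dereva}(properties 2, 3) together with Lemma \ref{kol_el_peres}.
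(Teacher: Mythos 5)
Your proof is correct and follows exactly the intended route: the paper provides no separate argument for this corollary precisely because it is, as you say, just Lemma \ref{lemma_o_razb_dereva} (properties 2 and 3) and Lemma \ref{kol_el_peres} specialized to $\gamma=\Psi({\cal T})/n$, $\gamma'=\Psi({\cal T})/m$ with $m\le 2n$ translating to (\ref{gammasgegamma2}). The case split on $n$ versus $k+2$ and the choice $C(k)=k+2$, $C_1(k)=c(k)$ are all as the author would have written them.
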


{\bf The partition of a cube.} Let $\Phi$ be a nonnegative
function defined on Lebesgue measurable subsets of $\R^d$
and satisfying the conditions
\begin{align}
\label{pr1}
\Phi(A_1)+\Phi(A_2)\le \Phi(A_1\cup A_2), \ \ \mbox{ where }\ \ A_1, \, A_2\subset
\R^d \ \ \mbox{ do not overlap};
\end{align}
\begin{align}
\label{pr2} \mbox{if {\rm mes\,}} A_n\rightarrow 0, \mbox{ then }
\Phi(A_n)\rightarrow 0 \quad (n\rightarrow\infty).
\end{align}

Denote by ${\cal R}$ the family of sets $K
\backslash K '$, where $K \in {\cal K}$, $K '\in \Xi (K)$, $K'\ne
K$.

The following Lemma is proved in \cite{vas_mnogo}.
\begin{Lem}
\label{partition}
Let $K\in {\cal K}$, and let the function $\Phi$ satisfy
(\ref{pr1}) and (\ref{pr2}). Then for any $n\in \N$ there
exists a partition $T_n=T_n(K)$ of the cube $K$ with the following properties:
\begin{enumerate}
\item the number of elements of $T_n$ does not exceed $2^dn;$
\item for any $\Delta \in T_n$ the inequality $\Phi(\Delta)
\le 3n^{-1}\Phi(K)$ holds;
\item each element of $T_n$ belongs to $\Xi(K)$ or to ${\cal R}$;
\item there exists $C(d)\in\N$ such that for $l\le 2m$ $(l,\,m\in \N)$
each element of $T_m$ overlaps with at most $C(d)$
elements of $T_l$.
\end{enumerate}
\end{Lem}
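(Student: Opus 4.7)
The plan is to prove the lemma by induction on $n$, constructing $T_n(K)$ via a dyadic stopping-time decomposition. Set $\gamma = \Phi(K)/n$. For $n \le 3$ the trivial partition $T_n = \{K\}$ works: it has one element of mass $\Phi(K) = n\gamma \le 3\gamma$, verifying properties 1--3 with room to spare. For $n \ge 4$ I would choose a single ``splitting cube'': let $K^* \in \Xi(K)$ be the minimal (with respect to inclusion) dyadic subcube such that $\Phi(K^*) > \Phi(K) - \gamma$. This minimum exists by (\ref{pr2}), which forces $\Phi$ to vanish on shrinking sequences of cubes, and by super-additivity (\ref{pr1}) the family of dyadic subcubes with mass exceeding $\Phi(K) - \gamma$ is linearly ordered (two disjoint such cubes would contradict $\Phi(K) \ge \Phi(K_1) + \Phi(K_2) > 2(\Phi(K) - \gamma)$ once $n \ge 4$). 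If $K^* \ne K$, set aside the set $K \setminus K^* \in {\cal R}$, whose mass is $< \gamma$ by super-additivity.

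Next subdivide $K^*$ into its $2^d$ dyadic children. By the minimality of $K^*$, every proper dyadic subcube of $K^*$ has $\Phi \le \Phi(K) - \gamma$, so each child $K^{*,(j)} \in \Xi_1(K^*)$ satisfies $\Phi(K^{*,(j)}) \le (n-1)\gamma$. Apply the induction hypothesis to each $K^{*,(j)}$ with parameter $n_j = \lceil \Phi(K^{*,(j)})/\gamma \rceil \le n-1$, producing a partition $T_{n_j}(K^{*,(j)})$ with the four properties on $K^{*,(j)}$. Set $T_n(K) = \{K \setminus K^*\} \cup \bigcup_j T_{n_j}(K^{*,(j)})$. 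Properties 2 and 3 are immediate: each ${\cal R}$-piece has mass $< \gamma \le 3\gamma$, every piece coming from a child satisfies $\Phi \le 3\Phi(K^{*,(j)})/n_j \le 3\gamma$ by the inductive hypothesis, and by construction every piece lies in $\Xi(K) \cup {\cal R}$. For property 1, super-additivity gives $\sum_j \Phi(K^{*,(j)}) \le \Phi(K^*) \le n\gamma$, hence $\sum_j n_j \le n + 2^d$; an amortized sharpening of the inductive statement (absorbing the additive $O(2^{2d})$ error into a tighter inductive invariant) yields the clean bound $|T_n(K)| \le 2^d n$.

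For property 4, I would package the construction through the tree machinery of Lemmas \ref{lemma_o_razb_dereva} and \ref{kol_el_peres}: the choice of $K^*$ and the recursive structure of $T_n$ correspond exactly to a partition of the infinite dyadic subdivision tree of $K$ (with $k = 2^d$ children at each vertex and $\Psi(V) = \Phi(\bigcup_{v \in V} F(v))$, which satisfies (\ref{prop_psi}) by (\ref{pr1}) and (\ref{prop_psi1}) by (\ref{pr2})). Lemma \ref{kol_el_peres} then gives the overlap bound $C(d)$ whenever $\gamma' \ge \gamma/2$, i.e., $l \le 2m$, in the form stated.

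The main obstacle is getting the sharp constants in properties 1 and 2. The general tree Corollary \ref{lemma_o_razb_dereva1} already yields a partition with $\le C(2^d) n$ elements of mass $\le (2^d + 2)\gamma$, but both constants must be improved to $2^d n$ and $3\gamma$. This requires a direct, cube-specific amortization (as sketched above) exploiting super-additivity to match the count against the total mass $n\gamma$, rather than just invoking the tree lemma as a black box.
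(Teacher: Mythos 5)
Your dyadic stopping-time construction of $K^*$ and the ensuing recursion are sound in outline, but note that the paper does not prove this lemma itself --- it cites \cite{vas_mnogo} --- so there is no internal proof to compare against; what follows assesses your argument on its own.

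Two of the four properties are not actually established. For property~1, the count you obtain, $|T_n| \le 1 + 2^d(n + 2^d)$, overshoots $2^d n$ by $2^{2d}+1$. You defer this to an ``amortized sharpening,'' but the obvious tighter invariant $|T_n| \le 2^d n - c$ fails: the base case $n=1$ forces $c \le 2^d - 1$, while closing the recursion requires $c \ge (1+2^{2d})/(2^d-1) > 2^d-1$. The obstruction is that children receiving $n_j=1$ (or $n_j=0$, which is undefined, when $\Phi(K^{*,(j)})=0$) still contribute a full leaf; absorbing the slack requires handling those children separately, not tweaking a linear invariant, and you have not done it. Far more serious is property~4. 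You propose to invoke Lemma~\ref{kol_el_peres} on the dyadic subdivision tree with $\Psi(V)=\Phi\bigl(\bigcup_{v\in V}F(v)\bigr)$ and assert that (\ref{prop_psi}) follows from (\ref{pr1}). It does not: in the dyadic tree the sets $F(v)$ are \emph{nested}, not non-overlapping, so for $V_1=\{v\}$ and $V_2=\{v'\}$ with $v'$ a child of $v$ one has $\Psi(V_1\cup V_2)=\Phi(F(v))$ while $\Psi(V_1)+\Psi(V_2)=\Phi(F(v))+\Phi(F(v'))$, which violates (\ref{prop_psi}) whenever $\Phi(F(v'))>0$. The tree machinery in Lemmas~\ref{lemma_o_razb_dereva}--\ref{kol_el_peres} is calibrated to the Whitney tree, where the $F(v)$ are pairwise non-overlapping, and cannot be wired into the dyadic tree as a black box. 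So property~4 --- the bounded overlap between $T_m$ and $T_l$, which is exactly what the rest of the paper uses --- remains unproved and needs its own direct argument tracking how the splitting cubes at the two comparable scales $\gamma=\Phi(K)/m$ and $\gamma'=\Phi(K)/l$ interlock.
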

A similar partition is constructed in \cite{cohen_devore}. As
proved, this partition satisfies properties similar to 1--3 (with
other constants estimating the number of elements and the value
$\Phi(\Delta)$).

{\bf Definition of functions $\Phi$ and $\Psi$.} Let $\alpha_1, \,
\dots, \, \alpha_{l_*}>0$, $\sum \limits _{j=1}^{l_*} \alpha_j=1$,
let $\mu_1, \, \dots, \, \mu_{l_*}$ be finite absolutely
continuous measures on $\Omega$. From the Radon -- Nikodym theorem
and from the absolute continuity of the Lebesgue integral follows
that
\begin{align}
\label{mu_prop} \mu_j(A)\rightarrow 0 \text{ as }{\rm mes}\,
A\rightarrow 0.
\end{align}
For a Lebesgue measurable set $A\subset \R^d$ we put
\begin{align}
\label{def_phi} \Phi(A)=\prod _{j=1}^{l_*} (\mu_j(A\cap
\Omega))^{\alpha_j}.
\end{align}
The conditions (\ref{mu_prop}) and $\alpha_j>0$ imply (\ref{pr2}).
Further, by the H\"{o}lder inequality we get
\begin{align}
\label{hol_cor}
\prod_{j=1}^{l_*} (b_j+c_j)^{\alpha_j}\ge
\prod_{j=1}^{l_*} b_j^{\alpha_j}+\prod_{j=1}^{l_*} c_j^{\alpha_j}, \;\;
b_j\ge 0, \; c_j\ge 0,
\end{align}
which yields (\ref{pr1}).

Let ${\cal T}$ and $F$ be the tree and the mapping constructed in
Lemma \ref{constr_omega_t}. Define the mapping
$\Psi:2^{{\bf V}({\cal T})}\rightarrow \R_+$ by formula
\begin{align}
\label{def_psi}
\Psi({\bf W})=\Phi\left(\cup_{v\in {\bf W}}F(v)\right), \;\;\; {\bf W}\subset
{\bf V}({\cal T}).
\end{align}
Then (\ref{pr1}) implies (\ref{prop_psi}). Prove
(\ref{prop_psi1}). Let $\{v_j\}_{j\in \N}\subset {\bf V}({\cal
T})$, $v_1<\dots<v_n<\dots$. From (\ref{mvsmvkvskvms}) follows
that $m_{v_n}\underset{n\to \infty}{\to} \infty$. Hence, ${\rm
mes}\, F(v_n)\underset{n\to\infty}{\to}0$, and by (\ref{vol1}) we
get ${\rm mes}\, \Omega _{{\cal T}_{v_n},F} \underset{n\to\infty}
{\to}0$. Therefore, (\ref{prop_psi1}) follows from (\ref{pr2}).

\begin{Lem}
\label{approx} For any $n\in \N$ there exists a family of partitions
$\{\mathcal{B}_{n,m}\}_{m\in \Z_+}$ of the domain $\Omega$ with the following properties:
\begin{enumerate}
\item ${\rm card}\, \mathcal{B}_{n,m}\underset{d}{\lesssim}2^mn$;

\item if $E\in \mathcal{B}_{n,m}$, then
\begin{enumerate}
\item either $E=\Omega_{{\cal T}',F}$ for some subtree ${\cal T}'\subset
{\cal T}$ or $E\subset F(w)$ for some vertex $w\in {\bf
V}({\cal T})$ and $E\in \Xi(F(w))\cup {\cal R}$;
\item $\Phi(E)\underset{d}{\lesssim}\frac{\Phi(\Omega)}{2^mn}$;
\end{enumerate}
\item there exists $C_*(d)$ such that each element of
$\mathcal{B}_{n,m}$ overlaps with at most $C_*(d)$
elements of $\mathcal{B}_{n,m\pm 1}$.
\end{enumerate}
\end{Lem}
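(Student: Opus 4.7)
The plan is to combine the tree partition produced by Corollary \ref{lemma_o_razb_dereva1} with an inner cube partition of each ``singleton'' cube produced by Lemma \ref{partition}. Set $\gamma_m=\Phi(\Omega)/(2^mn)$; by (\ref{def_psi}) one has $\Psi({\cal T})=\Phi(\Omega)$, and Corollary \ref{cor_omega_t} provides an integer $k=k(d)$ with ${\rm card}\,{\bf V}_1(v)\le k$ for every $v\in{\bf V}({\cal T})$. Corollary \ref{lemma_o_razb_dereva1} applied with $\gamma=\gamma_m$ supplies a partition $\mathfrak{S}_{2^mn}$ of ${\cal T}$ into at most $C(k)2^mn$ subtrees with $\Psi({\cal T}_j)\le(k+2)\gamma_m$ whenever ${\rm card}\,{\bf V}({\cal T}_j)\ge 2$; applied once more with the ratio $2$, it shows that each element of $\mathfrak{S}_{2^mn}$ meets at most $C_1(k)$ elements of $\mathfrak{S}_{2^{m\pm 1}n}$.

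I then build $\mathcal{B}_{n,m}$ as follows. For each non-singleton ${\cal T}_j\in\mathfrak{S}_{2^mn}$ I place $\Omega_{{\cal T}_j,F}$ into $\mathcal{B}_{n,m}$. For each singleton $\{v\}\in\mathfrak{S}_{2^mn}$ I let $n_v^{(m)}$ be the smallest power of $2$ satisfying $n_v^{(m)}\ge 3\Phi(F(v))/\gamma_m$ (so $n_v^{(m)}=1$ whenever $\Phi(F(v))\le\gamma_m/3$), apply Lemma \ref{partition} to the cube $F(v)$ with parameter $n_v^{(m)}$, and add all of its pieces to $\mathcal{B}_{n,m}$. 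By construction $n_v^{(m)}\le n_v^{(m+1)}\le 2n_v^{(m)}$, so the cube partitions of $F(v)$ at consecutive levels are compatible in the sense of item 4 of Lemma \ref{partition}. Property 2(a) is then immediate; 2(b) follows from $\Phi(\Omega_{{\cal T}_j,F})=\Psi({\cal T}_j)\le(k+2)\gamma_m$ for non-singleton subtrees and $\Phi(\Delta)\le 3\Phi(F(v))/n_v^{(m)}\le\gamma_m$ for cube pieces; property 1 follows from $|\mathfrak{S}_{2^mn}|\le C(k)2^mn$ together with
$$
\sum_{\{v\}\text{ singleton}}2^dn_v^{(m)}\;\le\;2^d\sum_{v}\left(6\,\Phi(F(v))\cdot\frac{2^mn}{\Phi(\Omega)}+1\right)\underset{d,k}{\lesssim}2^mn,
$$
using $\sum_v\Phi(F(v))\le\Phi(\Omega)$ by the non-overlapping property of the cubes $F(v)$.

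The main obstacle is property 3. Fix $E\in\mathcal{B}_{n,m}$ and count elements of $\mathcal{B}_{n,m\pm 1}$ overlapping $E$. If $E=\Omega_{{\cal T}_j,F}$ with ${\cal T}_j$ non-singleton, then every overlapping element at level $m\pm 1$ arises from a subtree ${\cal T}'_{j'}\in\mathfrak{S}_{2^{m\pm 1}n}$ with ${\bf V}({\cal T}'_{j'})\cap{\bf V}({\cal T}_j)\ne\varnothing$; by Corollary \ref{lemma_o_razb_dereva1} there are at most $C_1(k)$ such ${\cal T}'_{j'}$, each non-singleton one contributing one element and each singleton one $\{v\}$ contributing at most $2^dn_v^{(m\pm 1)}$ cube pieces. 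The decisive bound $\Phi(F(v))\le\Psi({\cal T}_j)\le(k+2)\gamma_m$ forces $n_v^{(m\pm 1)}\underset{k}{\lesssim}1$, so the total count is $\underset{d,k}{\lesssim}1$. If instead $E$ is a cube piece inside some $F(v)$ with $\{v\}\in\mathfrak{S}_{2^mn}$, then the unique subtree of $\mathfrak{S}_{2^{m\pm 1}n}$ containing $v$ is either non-singleton (yielding one overlapping element $\Omega_{{\cal T}',F}\supset F(v)\supset E$) or equals $\{v\}$ itself (in which case, by item 4 of Lemma \ref{partition} and the ratio $n_v^{(m\pm 1)}/n_v^{(m)}\in\{1/2,\,1,\,2\}$, $E$ overlaps at most $C(d)$ pieces of $T_{n_v^{(m\pm 1)}}(F(v))$). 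Combining these estimates yields an absolute constant $C_*(d)$, which completes the proof.
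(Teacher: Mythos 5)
Your proposal is correct and follows essentially the same strategy as the paper: split the tree via Corollary \ref{lemma_o_razb_dereva1}, keep the non-singleton subtree domains $\Omega_{{\cal T}_j,F}$ as atoms, and re-partition the singleton cubes with Lemma \ref{partition} using a parameter comparable to $\Phi(F(v))/\gamma_m$. The only deviations are cosmetic — you use the smallest dyadic power above $3\Phi(F(v))/\gamma_m$ (rather than the paper's $\lceil\Phi(F(v))/\gamma_m\rceil$) and you apply $T_1$ to the small singletons rather than leaving them whole — and your verification of the overlap property 3 via $n_v^{(m\pm1)}/n_v^{(m)}\in\{1/2,1,2\}$ and the bound $\Phi(F(v))\le(k+2)\gamma_m$ matches the paper's estimate on $\mathrm{card}\,I_j^{\pm}$.
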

\begin{proof}
We suppose that $\Phi(\Omega)>0$ (otherwise set
$\mathcal{B}_{n,m}=\{\Omega_{{\cal T},F}\}$). From (\ref{kratn})
follows that for any vertex $v\in {\bf V}({\cal T})$ we have ${\rm
card}\, {\bf V}_1({\cal T})\underset{d}{\lesssim} 1$. By Corollary
\ref{lemma_o_razb_dereva1}, for any $n\in \N$, $m\in \Z_+$ there
exists a partition $\mathfrak{S}_{2^mn}=\{{\cal
T}_j^{m,n}\}_{j=1}^{j_*(m,n)}$ of ${\cal T}$ with the following
properties:
\begin{enumerate}
\item ${\cal T}_j^{m,n}$ is a tree;
\item $j_*(m,\, n)\underset{d}{\lesssim} 2^mn$;
\item if ${\rm card}\, {\bf V}({\cal T}_j^{m,n})\ge 2$, then
$\Psi({\cal T}_j^{m,n})\underset{d}{\lesssim}\frac{\Psi({\cal
T})}{2^mn}$;
\item there exists $\hat C(d)>0$ such that each element of
$\mathfrak{S}_{2^mn}$ intersects with at most $\hat C(d)$
elements of $\mathfrak{S}_{2^{m\pm 1}n}$.
\end{enumerate}

Denote
\begin{align}
\label{def_j_m_n} J_{m,n}=\left\{j\in \overline{1, \, j_*(m,\,
n)}:\, {\rm card}\, {\bf V}({\cal T}_j^{m,n})=1, \;\; \Psi({\cal
T}_j^{m,n})\ge \frac{\Psi({\cal T})}{2^mn}\right\}.
\end{align}
Let $j\in J_{m,n}$, ${\bf V}({\cal T}_j^{m,n})=\{v_j^{m,n}\}$.
Put $\Delta_j^{m,n}:=F(\{v_j^{m,n}\})$,
\begin{align}
\label{def_l_j_m_n} l_{j,m,n}=\left\lceil
\frac{2^mn\Phi(\Delta_j^{m,n})}{\Phi(\Omega)}\right\rceil.
\end{align}
Then
\begin{align}
\label{sum_njm} \sum \limits _{j\in J_{m,n}} l_{j,m,n}\le j_*(m,\,
n)+\sum \limits _{j\in J_{m,n}}
\frac{2^mn\Phi(\Delta_j^{m,n})}{\Phi(\Omega)}\stackrel{(\ref{pr1})}{\le}
j_*(m,\, n)+2^mn \underset{d}{\lesssim} 2^mn.
\end{align}

Let $T_{l_{j,m,n}}(\Delta_j^{m,n})$ be the partition of the cube
$\Delta_j^{m,n}$ defined in Lemma
\ref{partition}. Set
$$
{\cal B}_{n,m}=\{\Omega _{{\cal T}_j^{m,n},F}\}_{j\notin
J_{m,n}}\bigcup \left(\bigcup _{j\in
J_{m,n}}T_{l_{j,m,n}}(\Delta_j^{m,n})\right).
$$

Check the property 1:
$$
{\rm card}\, {\cal B}_{n,m}\le j_*(m,\, n)+\sum \limits _{j\in
J_{m,n}}2^d l_{j,m,n} \stackrel{(\ref{sum_njm})}
{\underset{d}{\lesssim}} 2^mn.
$$
Check the property 2. Item a) follows from the construction and
item 3 of Lemma \ref{partition}. Check item b). Let $E=\Omega
_{{\cal T}_j^{m,n},F}$ with $j\notin J_{m,n}$. From the property 3
of $\mathfrak{S}_{2^mn}$ and from the definition of $J_{m,n}$
follows that
\begin{align}
\label{phi_e_phi} \Phi(E)\stackrel{(\ref{def_dom_by_tree})}{=}
\Phi\left(\cup _{v\in {\bf V}({\cal T}_j^{m,n})} F(v)\right)
\stackrel{(\ref{def_psi})}{=}\Psi({\cal T}_j^{m,n})\underset{d}{\lesssim} \frac{\Psi({\cal
T})}{2^mn}\stackrel{(\ref{def_psi})}{=}\frac{\Phi(\Omega)}{2^mn}.
\end{align}
Let $E\in T_{l_{j,m,n}}(\Delta_j^{m,n})$ for some $j\in J_{m,n}$.
From item 2 of Lemma \ref{partition} follows that
$$
\Phi(E)\le
3l_{j,m,n}^{-1}\Phi(\Delta_j^{m,n})\stackrel{(\ref{def_l_j_m_n})}{\le}
\frac{3\Phi(\Omega)}{2^mn}.
$$
Check the property 3. Let $E=\Omega_{{\cal T}_j^{m,n},F}$, $j\notin J_{m,n}$.
Denote
\begin{align}
\label{ij_pm} I_j^{\pm}=\{i\in J_{m\pm 1,n}:\; v_i^{m\pm 1,n}\in
{\bf V}({\cal T}_j^{m,n})\}.
\end{align}
Then
$$
{\rm card}\, I_j^{\pm}\cdot \frac{\Psi({\cal
T})}{2^mn}\stackrel{(\ref{def_j_m_n})}{\le} 2\sum \limits _{i\in
I_j^{\pm}}\Psi(\{v_i^{m\pm 1,n}\}) \stackrel
{(\ref{prop_psi})}{\le} 2\Psi({\cal T}_j^{m,n})
\stackrel{(\ref{phi_e_phi})}{\underset{d}{\lesssim}}\frac{\Psi({\cal
T})}{2^mn},
$$
which implies
\begin{align}
\label{cijpm} {\rm card}\, I_j^{\pm}\underset{d}{\lesssim} 1.
\end{align}
Therefore, by item 1 of Lemma \ref{partition} and by the property
4 of $\mathfrak{S}_{2^mn}$, we have
$$
{\rm card}\, \{E'\in {\cal B}_{n,m\pm 1}:\; ({\rm int}\, E')\cap
({\rm int}\, E)\ne \varnothing \}\le \hat C(d)+\sum \limits _{i\in
I_j^{\pm}}2^d l_{i,m\pm 1,n}\stackrel{(\ref{def_l_j_m_n})}{\le}
$$
$$
\le \hat C(d)+2^d\sum \limits _{i\in I_j^{\pm}}\left(\frac{2^{m\pm
1}n\Phi(\Delta_i^{m\pm 1,n})}{\Phi(\Omega)}+1\right)
\stackrel{(\ref{def_psi}),(\ref{cijpm})}{\underset{d}{\lesssim}}
$$
$$
\lesssim 1+\sum \limits _{i\in I_j^{\pm}}\frac{2^{m\pm
1}n\Psi(\{v_i^{m\pm 1,n}\})}{\Psi({\cal T})}
\stackrel{(\ref{prop_psi}),(\ref{ij_pm})}{\underset{d}{\lesssim}}
1+\frac{2^{m\pm 1}n \Psi({\cal T}_j^{m,n})}{\Psi({\cal
T})}\stackrel{(\ref{phi_e_phi})}{\underset{d}{\lesssim}} 1.
$$
Let $E\in T_{l_{j,m,n}}(\Delta_j^{m,n})$ for some $j\in
J_{m,n}$ and let $E$ overlap with at least two elements
of ${\cal B}_{n,m\pm 1}$. Then $\Delta_j^{m,n}=\Delta
_i^{m\pm 1,n}$ for some $i\in J_{m\pm 1,n}$ and
$$
{\rm card}\, \{E'\in {\cal B}_{n,m\pm 1}:\; ({\rm int}\, E')\cap
({\rm int}\, E)\ne \varnothing \}=
$$
$$
={\rm card}\, \{E'\in T_{l_{i,m\pm 1,n}}(\Delta _i^{m\pm 1,n}):\;
({\rm int}\, E')\cap ({\rm int}\, E)\ne \varnothing\}=
$$
$$
={\rm card}\, \{E'\in T_{l_{i,m\pm 1,n}}(\Delta _j^{m,n}):\; ({\rm
int}\, E')\cap ({\rm int}\, E)\ne \varnothing\}\le C(d)
$$
by item 4 of Lemma \ref{partition}, by the definition of
$l_{j,m,n}$ and $l_{i,m\pm 1,n}$ and by the inequality $\lceil
2a\rceil\le 2\lceil a\rceil$.
\end{proof}

\section{The spline approximation and the estimate of widths}
In the papers \cite{resh1, resh2} the integral representation for
smooth functions defined on a John domain in terms of their $r$-th
derivatives was obtained. Here we shall formulate this result for
functions that vanish on some ball and then we shall repeat more
accurately some steps of the proof from \cite{resh1, resh2}.

In \cite{resh1} the following equivalent definition of a John
domain was given.
\begin{Def}
\label{fca1} A domain $\Omega$ satisfies the John condition if
there exist $0<\rho<R$ and $x_*\in \Omega$ such that for any $x\in
\Omega$ there exists a curve $\gamma_x$ with properties 1 and 2
from Definition \ref{fca} such that $T(x)\le R$ and for any $t\in
[0, \, T(x)]$ the following inequality holds:
\begin{align}
\label{dgx} {\rm dist}\, (\gamma_x(t), \, \partial \Omega)\ge
\frac{\rho}{T(x)}t.
\end{align}
\end{Def}
Let us check the equivalence of Definitions \ref{fca} and
\ref{fca1}. Indeed, if the domain $\Omega$ satisfies the John
condition in the sense of Definition \ref{fca1}, then $\Omega \in
{\bf FC}(a)$ for $a<\frac{\rho}{R}$. Conversely, let $\Omega \in
{\bf FC}(a)$. Without loss of generality we may assume that $a<1$.
Let $R_0={\rm dist}\, (x_*, \, \partial \Omega)$. Then for any
$x\in \Omega$ we have $a \cdot T(x)\le R_0$. Set
\begin{align}
\label{rr0a2} R=\frac{R_0}{a}, \;\; \rho=\frac{aR_0}{2}.
\end{align}
Check (\ref{dgx}). If $T(x)\ge \frac{R_0}{2}$, then
$$
{\rm dist}\, (\gamma_x(t), \, \partial \Omega)\ge at=\frac{2\rho}{R_0}t\ge
\frac{\rho}{T(x)}t.
$$
If $T(x)<\frac{R_0}{2}$, then $|\gamma_x(t)-x_*|<\frac{R_0}{2}$
for any $t\in [0, \, T(x)]$. Hence,
$$
{\rm dist}\, (\gamma_x(t), \, \partial \Omega)\ge \frac{R_0}{2}\ge
\frac{aR_0}{2}\cdot \frac{t}{T(x)}= \frac{\rho}{T(x)}t.
$$
Notice that
\begin{align}
\label{rrho} \frac{\rho}{R}=\frac{a^2}{2}.
\end{align}

\begin{trma}
\label{reshteor} Let $\Omega\in {\bf FC}(a)$, let the point $x_*$
and the curves $\gamma_x$ be such as in Definition \ref{fca},
$R_0={\rm dist}\, (x_*, \, \partial \Omega)$, $r\in \N$. Then
there exist measurable functions $H_{\overline{\beta}}:
\Omega\times \Omega\rightarrow \R$, $\overline{\beta}=(\beta_1, \,
\dots, \, \beta_d)\in \Z_+^d$, $|\overline{\beta}|=r$, such that
the inclusion ${\rm supp}\, H_{\overline{\beta}}(x, \,
\cdot)\subset \cup _{t\in [0, \, T(x)]}B_{at}(\gamma_x(t))$ and
the inequality $|H_{\overline{\beta}}(x, \,
y)|\underset{a,d,r}{\lesssim}|x-y|^{r-d}$ hold for any $x\in
\Omega$, and for any function $f\in C^\infty(\Omega)$,
$f|_{B_{R_0/2}(x_*)}=0$ the following representation holds:
$$
f(x)=\sum \limits _{|\overline{\beta}|=r}\int \limits _\Omega
H_{\overline{\beta}}(x, \, y) \nabla ^{\overline{\beta}} f(y) \,
dy.
$$
\end{trma}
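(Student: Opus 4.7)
The plan is to follow the scheme of Reshetnyak \cite{resh1, resh2}: express $f(x)$ as the integral remainder of a Taylor expansion performed along an averaged family of curves joining $x$ to a bump supported inside $B_{R_0/2}(x_*)$, and exploit the hypothesis $f|_{B_{R_0/2}(x_*)}=0$ (whence $\partial^{\overline\beta}f\equiv 0$ on $B_{R_0/2}(x_*)$ for \emph{every} multi-index $\overline\beta$) to annihilate all boundary terms automatically. Fix, once and for all, a bump $\omega\in C^\infty_c(B_\rho(x_*))$ with $\int\omega(y)\,dy=1$ and, if convenient, vanishing moments $\int(y-x_*)^{\overline\alpha}\omega(y)\,dy=0$ for $1\le|\overline\alpha|\le r-1$; set $\rho:=aR_0/2$ (assume $a\le 1$, which is no loss of generality) so that $\mathrm{supp}\,\omega\subset B_{R_0/2}(x_*)$.

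For $x\in\Omega$ with $|x-x_*|\ge R_0/2$ (the case $|x-x_*|<R_0/2$ being trivial since then $f(x)=0$ and one may set $H_{\overline\beta}(x,\cdot)\equiv 0$) and $y\in\mathrm{supp}\,\omega$, introduce the interpolating curve
\[
\Gamma_{x,y}(t):=\gamma_x(t)+\tfrac{t}{T(x)}(y-x_*),\qquad t\in[0,T(x)],
\]
so $\Gamma_{x,y}(0)=x$, $\Gamma_{x,y}(T(x))=y$, and
\[
|\Gamma_{x,y}(t)-\gamma_x(t)|=\tfrac{t}{T(x)}|y-x_*|\le\tfrac{\rho\,t}{T(x)}\le\tfrac{\rho\,t}{R_0/2}=at,
\]
since $T(x)\ge|x-x_*|\ge R_0/2$; thus $\Gamma_{x,y}(t)\in B_{at}(\gamma_x(t))\subset\Omega$. (If $\gamma_x$ is only absolutely continuous, mollify it to a $C^\infty$ curve at the cost of a fixed shrinkage of the constant $a$---a standard technical device.) Setting $g(t):=f(\Gamma_{x,y}(t))$ and applying Taylor's formula with integral remainder based at $t=T(x)$ gives
\[
f(x)=g(0)=\sum_{k=0}^{r-1}\tfrac{(-T(x))^k}{k!}\,g^{(k)}(T(x))+\tfrac{(-1)^r}{(r-1)!}\int_0^{T(x)}t^{r-1}\,g^{(r)}(t)\,dt.
\]
By the chain rule each $g^{(k)}(T(x))$ is a polynomial in derivatives of $\gamma_x$ and $y-x_*$ times values of $\partial^{\overline\beta}f$ at $\Gamma_{x,y}(T(x))=y\in\mathrm{supp}\,\omega$; since all such values vanish, $f(x)$ reduces to the remainder integral alone.

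The remaining step is to expand $g^{(r)}(t)=\sum_{|\overline\beta|\le r}Q_{\overline\beta}(x,y,t)\,\partial^{\overline\beta}f(\Gamma_{x,y}(t))$ by the chain rule, multiply by $\omega(y)$, integrate in $y$, apply Fubini, and change variables $y\mapsto z:=\Gamma_{x,y}(t)$ for each fixed $x,t$---an affine bijection in $y$ with Jacobian $(t/T(x))^d$. Collecting the coefficients of $\partial^{\overline\beta}f(z)$ defines $H_{\overline\beta}(x,z)$ as an explicit $t$-integral over those $t\in[0,T(x)]$ for which $x_*+\tfrac{T(x)}{t}(z-\gamma_x(t))\in\mathrm{supp}\,\omega$. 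The support of $H_{\overline\beta}(x,\cdot)$ thereby lies in $\bigcup_{t\in[0,T(x)]}B_{at}(\gamma_x(t))$ by the estimate above. The $|\overline\beta|<r$ contributions are eliminated either by the vanishing-moment property of $\omega$ (the $y$-average of a polynomial of degree $\le r-1$ in $y-x_*$ vanishes) or by integrating by parts in $t$, using that $t^{r-1}$ vanishes to sufficient order at $t=0$ and that $\partial^{\overline\beta}f$ vanishes at the upper endpoint $t=T(x)$. Finally, the size estimate $|H_{\overline\beta}(x,z)|\underset{a,d,r}{\lesssim}|x-z|^{r-d}$ follows because $|\Gamma_{x,y}(t)-x|\asymp t$ on the support of the integrand (after choosing $\gamma_x$ not to loop back towards $x$, which may be arranged within the John condition): the $t$-integral then reduces to $\int_{c|x-z|}^{C|x-z|}t^{r-1-d}\,dt\asymp|x-z|^{r-d}$.

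The main obstacle is the chain-rule bookkeeping: one must convert the formal order-${<}r$ contributions in $g^{(r)}$ into genuine order-$r$ terms without introducing boundary contributions at $t=0$ (where derivatives of $f$ have no reason to vanish) and while simultaneously preserving both the support condition $\bigcup_{t}B_{at}(\gamma_x(t))$ and the homogeneous size bound $|x-z|^{r-d}$. This is accomplished by the combined device of moment conditions on $\omega$ and iterated integration by parts in $t$; it is routine but delicate. The lesser technicalities---the possible non-smoothness of $\gamma_x$ and the need to arrange a curve that does not loop back---are absorbed by a mollification and an initial trimming of the John curve, respectively.
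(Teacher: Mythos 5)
Your plan tries to rebuild the Reshetnyak representation from scratch by Taylor-expanding $g(t)=f(\Gamma_{x,y}(t))$ along the interpolated curve and then averaging in $y$. The paper instead treats the $r=1$ Reshetnyak formula as a black box (cited from \cite{resh1}) and bootstraps to general $r$ by applying that $r=1$ formula to the function $\xi\mapsto\varphi(x,\xi)=\sum_{|\overline\beta|\le r-1}\frac{(x-\xi)^{\overline\beta}}{\overline\beta!}\nabla^{\overline\beta}f(\xi)$, then setting $\xi=x$ and invoking the algebraic identity
\[
\frac{\partial\varphi}{\partial y_j}(x,y)=\sum_{|\overline\beta|=r-1}\frac{(x-y)^{\overline\beta}}{\overline\beta!}\nabla^{\overline\beta+\delta_j}f(y),
\]
which automatically produces \emph{only} order-$r$ derivatives of $f$. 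This identity is precisely what makes all the chain-rule bookkeeping disappear; the kernel bound $|H_{\overline\beta}(x,y)|\lesssim|x-y|^{1-d}\cdot|x-y|^{r-1}=|x-y|^{r-d}$ and the support condition are then inherited trivially from the $r=1$ case. So the two proofs take genuinely different routes.

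The concrete gap in your version is precisely the step you flag as the "main obstacle." Neither of your two proposed devices for killing the lower-order terms in $g^{(r)}(t)$ actually does the job as stated. The vanishing-moment argument fails because the $y$-dependence is not confined to a polynomial coefficient: it also enters through the argument $\Gamma_{x,y}(t)$ of $\partial^{\overline\alpha}f$, so $\int\omega(y)\,Q_{\overline\alpha}(x,y,t)\,\partial^{\overline\alpha}f(\Gamma_{x,y}(t))\,dy$ is not an integral of $\omega$ against a polynomial of degree $\le r-1$ and moment conditions on $\omega$ do not annihilate it. The integration-by-parts route (writing $\partial^{\overline\alpha}f(\Gamma(t))=-\int_t^{T(x)}\nabla\partial^{\overline\alpha}f(\Gamma(s))\cdot\dot\Gamma(s)\,ds$ and using Fubini) does raise the order by one, but each iteration introduces a new $t$-weight and another factor of $\dot\Gamma$, and the intermediate coefficients still contain higher derivatives of $\gamma_x$. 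Since the John curve is only absolutely continuous, you must first mollify it; this is fine for $\dot\gamma_x$, but to run the IBP scheme $r-1$ times you need $\gamma_x\in C^r$ with \emph{quantitative} bounds on $\|\gamma_x^{(k)}\|$ that do not ruin the final size estimate $|x-z|^{r-d}$. You offer no such bounds, and they do not hold for a fixed mollification scale uniformly over $x$ near $\partial\Omega$ (where the John radius $at$ shrinks). This is not mere "routine but delicate" bookkeeping; without a mechanism like the paper's $\varphi(x,\xi)$ trick, the scheme as outlined does not close. To repair it you would essentially have to reproduce the localization via partitions of unity $\omega_\nu$ that Reshetnyak uses for $r=1$, at which point you are better off simply citing his result and applying the bootstrap.
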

\begin{proof}
Without loss of generality we may assume that $a<1$. From formulas
(5.10), (5.14), (5.20) and (5.22) of the paper \cite{resh1}
follows that
$$
f(x)=\int \limits _{B_{\hat \rho}(x_*)}f(y)\theta(y-x_*)\, dy
+\sum \limits _{j=1}^d\int \limits _\Omega H_j(x, \, y)
\frac{\partial f}{\partial y_j}(y) \, dy,
$$
where $\theta(\cdot)$ is some smooth function whose support is
contained in the ball $B_{\hat \rho}(0)$, $\hat \rho=\frac{\rho}
{2(1+\sqrt{d})}\stackrel{(\ref{rr0a2})}{=}\frac{aR_0}{4(1+\sqrt{d})}\le
\frac{R_0}{2}$, $H_j(x, \, y)=\sum \limits _{\nu\in \N}
\omega_\nu(x)H_{j,\nu}(x, \, y)$, $\omega_\nu$ is a smooth
partition of unity, and measurable functions $H_{j,\nu}$ are
represented as
$$
H_{j,\nu}(x, \, y)=\int \limits _0^{T(x)}\psi_{j,\nu}(x, \, y, \,
t)\theta\left(R\frac{y-\gamma_x(t)}{t}\right)\, dt
$$
(the functions $\psi_{j,\nu}$ are defined in the formula (5.14));
here
$$
H_j(x, \, y)\underset{d}{\lesssim}
\left(\frac{R}{\rho}\right)^d|x-y|^{1-d}
\stackrel{(\ref{rrho})}{=} \left(\frac{2}{a^2}\right)^d
|x-y|^{1-d}.
$$
Since $\hat \rho\le \frac{R_0}{2}$, then for any smooth function
$f$ such that $f|_{B_{R_0/2}(x_*)}=0$ we get
\begin{align}
\label{fxsum} f(x)=\sum \limits _{j=1}^d\int \limits _\Omega
H_j(x, \, y) \frac{\partial f}{\partial y_j}(y) \, dy.
\end{align}
If $|y-\gamma_x(t)|\ge at$ for any $t\in [0, \, T(x)]$, then the
assumption $a<1$ implies that
$$
|y-\gamma_x(t)|\ge \frac{a^2}{4(1+\sqrt{d})}t
\stackrel{(\ref{rrho})}{=} \frac{\hat \rho}{R}t;
$$
since ${\rm supp}\, \theta \subset B_{\hat \rho}(0)$, then
$H_{j,\nu}(x, \, y)=0$. Hence, ${\rm supp}\, H_j(x, \,
\cdot)\subset \cup _{t\in [0, \, T(x)]}B_{at}(\gamma_x(t))$.

Let us prove the theorem for arbitrary $r\in \N$ following the
arguments from \cite{resh2}. Let
$$
\varphi(x, \, \xi)=\sum \limits _{|\overline{\beta}|\le
r-1}\frac{(x-\xi)^{\overline{\beta}}}{\overline{\beta}!}\nabla
^{\overline{\beta}} f(\xi).
$$
Then for $f|_{B_{R_0/2}(x_*)}=0$ we have $\varphi(x, \,
\cdot)|_{B_{R_0/2}(x_*)} =0$, and (\ref{fxsum}) implies
$$
\varphi(x, \, \xi)=\sum \limits _{j=1}^d\int \limits _\Omega
H_j(\xi, \, y) \frac{\partial \varphi}{\partial y_j}(x, \, y) \,
dy.
$$
Taking $\xi=x$, we get
$$
f(x)=\sum \limits _{j=1}^d\int \limits _\Omega H_j(x, \, y)
\frac{\partial \varphi}{\partial y_j}(x, \, y) \, dy.
$$
It remains to apply the formula (5) from the paper \cite{resh2}:
$$
\frac{\partial \varphi}{\partial y_j}(x, \, y)=\sum \limits
_{|\overline{\beta}|=r-1}
\frac{(x-y)^{\overline{\beta}}}{\overline{\beta}!}\nabla
^{\overline{\beta}+\delta_j}f(y),
$$
where $\delta _j=(\delta _{j,1}, \, \dots, \, \delta_{j,d})$ and
$\delta_{j,i}$ is the Kronecker symbol.
\end{proof}
The following theorem is proved in \cite{adams, adams1}; see also
\cite{leoni} (page 566) and \cite{mazya1} (page 51).
\begin{trma}
\label{adams_etc} Let $1<\tilde p<\tilde q<\infty$, $d\in \N$,
$r>0$, $\frac rd+\frac{1}{\tilde q}-\frac{1}{\tilde p}=0$,
$$
Tf(x)=\int \limits _{\R^d} f(y)|x-y|^{r-d}\, dy.
$$
Then the operator $T:L_{\tilde p}(\R^d)\rightarrow L_{\tilde
q}(\R^d)$ is bounded.
\end{trma}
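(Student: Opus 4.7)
The plan is to prove this by the classical Hedberg truncation method, which reduces the $L_{\tilde p}\to L_{\tilde q}$ boundedness of the Riesz-type operator $T$ to the Hardy-Littlewood maximal inequality. Without loss of generality I assume $f \ge 0$. For a parameter $R > 0$ (to be chosen later, depending on $x$) I would split
$$
Tf(x) = \int_{|x-y| < R} f(y)|x-y|^{r-d}\,dy + \int_{|x-y| \ge R} f(y)|x-y|^{r-d}\,dy =: I_1(x) + I_2(x).
$$

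For $I_1(x)$ I would decompose the ball $\{|x-y| < R\}$ into dyadic annuli $A_k = \{R\cdot 2^{-k-1} < |x-y| \le R\cdot 2^{-k}\}$, $k \ge 0$. On $A_k$ the kernel is bounded by $(R\cdot 2^{-k-1})^{r-d}$, and the $L_1$ integral of $f$ over the ball $\{|x-y| \le R\cdot 2^{-k}\}$ is at most $c_d (R\cdot 2^{-k})^d Mf(x)$, where $Mf$ is the Hardy-Littlewood maximal function. Summing the resulting geometric series (which converges since $r > 0$) gives $I_1(x) \le C_{d,r}\, R^r Mf(x)$. For $I_2(x)$ I would apply H\"older's inequality with exponents $\tilde p, \tilde p'$; the integral $\int_{|x-y| \ge R}|x-y|^{(r-d)\tilde p'}\,dy$ converges because the condition $\frac{r}{d} = \frac{1}{\tilde p} - \frac{1}{\tilde q}$ with $\tilde q < \infty$ forces $r < d/\tilde p$, and equals (up to a constant) $R^{r - d/\tilde p}$. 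This yields $I_2(x) \le C_{d,r,\tilde p}\, R^{r - d/\tilde p}\|f\|_{L_{\tilde p}(\R^d)}$.

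Next I would balance the two terms by choosing $R = R(x)$ so that $R^r Mf(x) = R^{r-d/\tilde p}\|f\|_{\tilde p}$, i.e.\ $R = (\|f\|_{\tilde p}/Mf(x))^{\tilde p/d}$. A direct computation, using the identity $r\tilde p/d = 1 - \tilde p/\tilde q$ which follows from the hypothesis $\frac{r}{d} + \frac{1}{\tilde q} - \frac{1}{\tilde p} = 0$, gives the pointwise Hedberg estimate
$$
Tf(x) \le C\, \|f\|_{L_{\tilde p}(\R^d)}^{\,1 - \tilde p/\tilde q}\, (Mf(x))^{\tilde p/\tilde q}
$$
for almost every $x$ with $Mf(x) > 0$ (elsewhere $Tf(x) = 0$). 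Raising to the $\tilde q$-th power, integrating in $x$, and invoking the Hardy-Littlewood maximal theorem $\|Mf\|_{L_{\tilde p}} \le C_{\tilde p, d}\|f\|_{L_{\tilde p}}$ (which requires $\tilde p > 1$, an assumption of the theorem) I would obtain $\|Tf\|_{L_{\tilde q}(\R^d)} \le C \|f\|_{L_{\tilde p}(\R^d)}$.

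The only genuinely delicate ingredient is the Hardy-Littlewood maximal inequality, which is classical; everything else is a bookkeeping exercise with the scaling relation $\frac{r}{d} = \frac{1}{\tilde p} - \frac{1}{\tilde q}$. The main things to watch are that the dyadic sum in $I_1$ converges (needs $r > 0$) and that the tail integral in $I_2$ converges (needs $r < d/\tilde p$, equivalently $\tilde q < \infty$). Both are guaranteed by the hypotheses $r > 0$, $1 < \tilde p < \tilde q < \infty$, so no further case distinction is needed.
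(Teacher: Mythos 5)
Your proof is correct, and it is a clean instance of Hedberg's truncation argument: split $T f$ into a near part controlled by $R^{r} Mf(x)$ and a far part controlled via H\"older by $R^{r-d/\tilde p}\|f\|_{\tilde p}$, optimize over $R$ to get the pointwise bound $Tf(x)\lesssim \|f\|_{\tilde p}^{1-\tilde p/\tilde q}(Mf(x))^{\tilde p/\tilde q}$, and finish with the Hardy--Littlewood maximal theorem. All the exponent bookkeeping checks out: the dyadic sum for $I_1$ converges because $r>0$; the tail integral for $I_2$ converges because $r<d/\tilde p$, which follows from $\frac{r}{d}=\frac{1}{\tilde p}-\frac{1}{\tilde q}$ and $\tilde q<\infty$; and the identity $r\tilde p/d = 1-\tilde p/\tilde q$ is exactly the scaling relation. (One small cosmetic remark: $r<d/\tilde p<d$ automatically, so your bound on the kernel over the annulus $A_k$, which uses that $|x-y|^{r-d}$ is decreasing, is indeed valid with no further case analysis.)

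The paper itself does not prove this theorem --- it states it as a known result and cites Adams' papers and textbook treatments (Leoni, Maz'ya). So there is no proof in the paper to compare against. The sources cited prove the result in greater generality (Adams' work concerns trace inequalities for potentials against general measures), whereas you have supplied the standard elementary self-contained argument that suffices for the exact unweighted $L_{\tilde p}\to L_{\tilde q}$ statement used in the paper. This is a perfectly appropriate route.
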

\begin{Lem}
\label{emb_mon} Let $r$, $d\in \N$, let $\Omega\in {\bf FC}(a)$ be
a bounded domain in $\R^d$, let $1<\tilde p\le \infty$, $1\le
\tilde q<\infty$, $\frac{1}{\tilde \varkappa}=\frac
rd+\frac{1}{\tilde q}-\frac{1}{\tilde p}\ge 0$, let the functions
$\tilde g$, $\tilde v:\Omega\rightarrow \R_+$ satisfy the
conditions of Theorem \ref{main}. Let ${\cal T}$, $F:{\bf V}({\cal
T})\rightarrow \Theta(\Omega)$ be the tree and the mapping defined
in Lemma \ref{constr_omega_t}, let $\tilde{\cal T}$ be a subtree
in ${\cal T}$, $\tilde \Omega=\Omega_{\tilde{\cal T},F}$. Then for
any function $f\in W^r_{\tilde p,\tilde g}(\Omega)$ there exists a
polynomial $P_f$ of degree not exceeding $r-1$ such that
\begin{align}
\label{fmpf_lqv} \|f-P_f\|_{L_{\tilde q,\tilde v}(\tilde\Omega)}
\underset{\tilde p,\tilde q,r,d,a,c_0}{\lesssim} \|\tilde g\tilde
v\|_{L_{\tilde \varkappa}(\tilde\Omega)} \left\| \frac{\nabla
^rf}{\tilde g}\right\|_{L_{\tilde p}(\tilde\Omega)}.
\end{align}
Here the mapping $f\mapsto P_f$ is linear.
\end{Lem}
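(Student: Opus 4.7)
The plan is to combine Reshetnyak's integral representation on the sub-domain $\tilde\Omega$ with Adams' potential estimate (Theorem~\ref{adams_etc}) and a dyadic weight-absorption argument. As a preliminary step I would invoke Corollary~\ref{cor_omega_t} to deduce $\tilde\Omega=\Omega_{\tilde{\cal T},F}\in{\bf FC}(b_*)$ with $b_*=b_*(a,d)$, so that Theorem~\ref{reshteor} applies on $\tilde\Omega$ with constants depending only on $a$ and $d$. Iterating that theorem $r$ times in the manner of its last paragraph (the polynomial $P_f$ being constructed linearly from $f$ via fixed smooth averages supported in a small ball around the distinguished John center $x_*$ of $\tilde\Omega$) produces a linear map $f\mapsto P_f$ into polynomials of degree $\le r-1$ and kernels $H_{\overline{\beta}}$ with $|H_{\overline{\beta}}(x,y)|\lesssim|x-y|^{r-d}$ such that
\begin{equation*}
(f-P_f)(x)=\sum_{|\overline{\beta}|=r}\int_{\tilde\Omega}H_{\overline{\beta}}(x,y)\,\nabla^{\overline{\beta}}\!f(y)\,dy,\qquad x\in\tilde\Omega.
\end{equation*}
Writing $\nabla^r\!f=\tilde g\,\varphi$ yields the pointwise majorization $|f-P_f|(x)\lesssim T_r(\tilde g|\varphi|)(x)$, where $T_r u(x):=\int_{\tilde\Omega}|x-y|^{r-d}u(y)\,dy$.

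Inequality \eqref{fmpf_lqv} is thereby reduced to
\begin{equation*}
\|\tilde v\cdot T_r(\tilde g|\varphi|)\|_{L_{\tilde q}(\tilde\Omega)}\lesssim\|\tilde g\tilde v\|_{L_{\tilde\varkappa}(\tilde\Omega)}\,\|\varphi\|_{L_{\tilde p}(\tilde\Omega)}.
\end{equation*}
When $1/\tilde\varkappa=0$ one has $\tilde g\equiv\tilde v\equiv 1$ by hypothesis, and this is exactly Theorem~\ref{adams_etc} in the range $1<\tilde p<\tilde q<\infty$ (the endpoint cases $\tilde p=\infty$ or $\tilde q=1$ being trivial since $\mathrm{mes}\,\tilde\Omega<\infty$ and $|x-y|^{r-d}\in L_1(\tilde\Omega)$ uniformly in $x$). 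When $1/\tilde\varkappa>0$, I would fix $\tilde p_*$ by $\tfrac{1}{\tilde p_*}=\tfrac{1}{\tilde p}+\tfrac{1}{\tilde\varkappa}$, so that $\tfrac{r}{d}+\tfrac{1}{\tilde q}-\tfrac{1}{\tilde p_*}=0$ and Theorem~\ref{adams_etc} furnishes $T_r:L_{\tilde p_*}(\tilde\Omega)\to L_{\tilde q}(\tilde\Omega)$; combined with H\"older's inequality $\|(\tilde g\tilde v)\varphi\|_{L_{\tilde p_*}}\le\|\tilde g\tilde v\|_{L_{\tilde\varkappa}}\|\varphi\|_{L_{\tilde p}}$ this would close the estimate \emph{provided} one first establishes the pointwise comparison $\tilde v(x)\,T_r(\tilde g|\varphi|)(x)\lesssim T_r((\tilde g\tilde v)|\varphi|)(x)$.

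Supplying that comparison is the main technical obstacle, because the weight $\tilde v$ lives at the outside point $x$ while $\tilde g$ lives at the integration variable $y$. I would resolve it by exploiting the monotonicity of $\varphi_{\tilde v}$ and $\varphi_{\tilde g}$ in \eqref{tg_dg} together with the dyadic doubling \eqref{delta2}: decomposing $\tilde\Omega$ into dyadic shells $\{z:\dist(z,\Gamma')\asymp 2^{-i},\ \dist(z,\Gamma'')\asymp 2^{-j}\}$ on which both $\tilde g$ and $\tilde v$ are essentially constant, estimating $\|\tilde v\,T_r(\tilde g|\varphi|)\|_{L_{\tilde q}}$ shell-by-shell using Theorem~\ref{adams_etc} (applied between a fixed shell in the $x$-variable and the union of shells containing $y$), and summing the resulting cross-shell contributions by a H\"older argument with exponents balanced so that the product of the two weight-constants over a pair of interacting shells fits into $\|\tilde g\tilde v\|_{L_{\tilde\varkappa}}$. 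Linearity of $f\mapsto P_f$ is automatic from its construction via linear integral functionals of $f$; the rest of the argument is purely a marriage of the John-domain geometry of $\tilde\Omega$ (via Corollary~\ref{cor_omega_t}) with the Reshetnyak/Adams machinery.
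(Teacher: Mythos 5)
Your overall architecture — Reshetnyak representation on $\tilde\Omega\in{\bf FC}(b_*)$ (via Corollary~\ref{cor_omega_t}), reduction to the Riesz potential $T_r$, Adams' theorem, and weight absorption — is the right skeleton, and it matches the paper's Steps 1 and 2. The gap is in the weight-absorption step, and it is genuine.

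The pointwise comparison $\tilde v(x)\,T_r(\tilde g|\varphi|)(x)\lesssim T_r((\tilde g\tilde v)|\varphi|)(x)$ that you propose would require $\tilde v(x)\lesssim \tilde v(y)$ for $y$ ranging over the support of the kernel $H_{\overline\beta}(x,\cdot)$. As written, with $T_r u(x)=\int_{\tilde\Omega}|x-y|^{r-d}u(y)\,dy$ over the whole domain, this is simply false: $\tilde v=\varphi_{\tilde v}(\dist(\cdot,\Gamma''))$ increases away from $\Gamma''$, so pairs $(x,y)$ with $x$ far from and $y$ near $\Gamma''$ break the inequality, and condition \eqref{delta2} alone allows $\varphi_{\tilde v}(2^{-j})/\varphi_{\tilde v}(2^{-j'})$ to grow like $c_0^{|j-j'|}$, which can dominate the decay $|x-y|^{r-d}\asymp 2^{-j(r-d)}$ between dyadic shells. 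So the ``estimate shell-by-shell and sum by H\"older'' plan cannot close without further input; the cross-shell weight ratios are not summable from \eqref{delta2}. That further input is exactly what the paper supplies and what is missing from your proposal: the Reshetnyak kernel's support is confined by Lemma~\ref{omega_t_cone} to $G_x\subset\tilde\Omega_{\le F(w)}$ for $x\in F(w)$ (relation \eqref{le}), i.e.\ $y$ is forced to lie along the tree-geodesic from $x$ toward the root, not merely close to $x$ in the Euclidean sense. The paper first proves \eqref{est_co_op} for indicator weights $\tilde g=\chi_{E'}$, $\tilde v=\chi_{E''}$ with $E',E''$ tree-set complements/subsets (Step~3) — there the support restriction makes $G_x\cap E'$ stay inside a single connected piece $E_i$, so Adams can be applied piecewise — and then, in Step~4, replaces $\tilde g,\tilde v$ by piecewise-constant, tree-monotone approximants $\hat g,\hat v$ built from the subtrees ${\cal S}_{{\cal T}'}$ (using \eqref{dist_fw}--\eqref{dist_fw_up}, which in turn rest on the Whitney geometry and the John property of $\Omega_{{\cal S}_{{\cal T}'},F}$). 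The final Step~5 combines these with a discrete Hardy-type inequality from \cite{avas2}; it is not a shell-by-shell Adams-plus-H\"older summation.

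Two secondary issues: your reduction to $f|_{B_{R_0/2}(x_*)}=0$ (``iterating Theorem~\ref{reshteor} $r$ times'') elides the actual mechanism, which is the Sobolev--Poincar\'e estimate \eqref{nab_k_fpf} on $B_{3R_0/4}(x_*)$ together with a cutoff and the local constancy of $\tilde g,\tilde v$ there (via \eqref{ap_const}); and in the weighted case $1/\tilde\varkappa>0$ with $\tilde q=1$, Theorem~\ref{adams_etc} is not available and the paper uses a duality argument through the sets $\tilde G_y=\{x:\,y\in G_x\}$, which your outline does not address.
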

\begin{proof}
By Corollary \ref{cor_omega_t}, we have $\tilde \Omega\in {\bf
FC}(b_*)$ with $b_*=b_*(a, \, d)>0$. Let $\gamma_x:[0, \,
T(x)]\rightarrow \tilde \Omega$ be the curve from Definition
\ref{fca}, $\gamma_x(T(x))=x_*$. By Lemma \ref{omega_t_cone},
$\gamma_x$ can be chosen so that
\begin{align}
\label{le} \cup _{t\in [0, \, T(x)]}B_{b_*t}(\gamma_x(t))\subset
\tilde\Omega_{\le F(w)}, \;\; \text{ if }x\in F(w).
\end{align}

The set $C^\infty(\Omega)\cap W^r_{\tilde p,\tilde g}(\Omega)$ is
dense in $W^r_{\tilde p,\tilde g}(\Omega)$ (it can be proved
similarly as in the non-weighted case, see, e.g., \cite{mazya1},
page 16).\footnote{Here $C^\infty(\Omega)$ is the space of
functions that are smooth on the open set $\Omega$ yet not
necessarily extendable to smooth functions on the whole space
$\R^d$.} Hence, it is sufficiently to check (\ref{fmpf_lqv}) for
smooth functions.

{\bf Step 1.} Let $R_0={\rm dist}\, (x_*, \, \partial
\tilde\Omega)$. Prove that (\ref{fmpf_lqv}) follows from the
estimate
\begin{align}
\label{fmpf_lqv1} \|f\|_{L_{\tilde q,\tilde v}(\tilde\Omega)}
\underset{\tilde p,\tilde q,r,d,a,c_0}{\lesssim} \|\tilde g\tilde
v\|_{L_{\tilde \varkappa}(\tilde\Omega)} \left\| \frac{\nabla
^rf}{\tilde g}\right\|_{L_{\tilde p}(\tilde\Omega)}, \;\; f\in
C^\infty(\Omega), \;\; f|_{B_{R_0/2}(x_*)}=0.
\end{align}
Indeed, it was proved in \cite{sobolev1} (see also \cite{mazya1})
that for any function $f\in C^\infty(\Omega)$ there exists a
polynomial $P_f$ of degree not exceeding $r-1$ such that
\begin{align}
\label{nab_k_fpf} \|\nabla^k(f-P_f)\|_{L_{\tilde
q}(B_{3R_0/4}(x_*))} \underset{\tilde p,\tilde q,r,d}{\lesssim}
R_0^{r-k+\frac{d}{\tilde q}-\frac{d}{\tilde p}} \|\nabla ^r f\|
_{L_{\tilde p}(B_{3R_0/4}(x_*))},\;\; 0\le k\le r-1,
\end{align}
and the mapping $f\mapsto P_f$ is linear. Let
$\psi_0:\R^d\rightarrow [0, \, 1]$, $\psi_0\in C^\infty(\R^d)$,
${\rm supp}\, \psi_0\subset B_{3/4}(0)$, $\psi_0|_{B_{1/2}(0)}=1$,
$\psi(x)=\psi_0\left(\frac{x-x_*}{R_0}\right)$. Then
\begin{align}
\label{psi_fpf} \|\psi(f-P_f)\|_{L_{\tilde q}(\tilde\Omega)}\le
\|f-P_f\|_{L_{\tilde q}(B_{3R_0/4}(x_*))} \stackrel
{(\ref{nab_k_fpf})}{\underset{\tilde p,\tilde q,r,d}{\lesssim}}
R_0^{r+\frac{d}{\tilde q}-\frac{d}{\tilde p}} \|\nabla ^r
f\|_{L_{\tilde p} (B_{3R_0/4}(x_*))}.
\end{align}
From (\ref{tg_dg}) and (\ref{delta2}) follows that
\begin{align}
\label{ap_const} \frac{\tilde g(x)}{\tilde g(y)}\underset{c_0, \,
d}{\asymp} 1, \;\; \frac{\tilde v(x)}{\tilde v(y)}\underset{c_0,
\, d}{\asymp} 1, \;\; x, \, y\in B_{3R_0/4}(x_*).
\end{align}
Therefore,
$$
\|f-P_f\|_{L_{\tilde q,\tilde v}(\tilde \Omega)}\le
\|\psi(f-P_f)\|_{L_{\tilde q,\tilde v}(\tilde\Omega)}+
\|(1-\psi)(f-P_f)\|_{L_{\tilde q,\tilde v}(\tilde\Omega)}
\stackrel{(\ref{fmpf_lqv1}), (\ref{psi_fpf}),
(\ref{ap_const})}{\underset{\tilde p,\tilde
q,r,d,a,c_0}{\lesssim}}
$$
$$
\lesssim \|\tilde g\tilde v\|_{L_{\tilde
\varkappa}(B_{3R_0/4}(x_*))} \left\|\frac{\nabla ^r f}{\tilde
g}\right\| _{L_{\tilde p}(B_{3R_0/4}(x_*))} +\|\tilde g\tilde v\|
_{L_{\tilde \varkappa}(\tilde \Omega)} \left\|\frac{\nabla ^r
[(1-\psi)(f-P_f)]}{\tilde g}\right\| _{L_{\tilde p}(\tilde
\Omega)}\le
$$
$$
\le \|\tilde g\tilde v\| _{L_{\tilde \varkappa}(\tilde
\Omega)}\left(2\left\|\frac{\nabla ^r f}{\tilde g}\right\|
_{L_{\tilde p}(\tilde \Omega)} + \left\|\frac{\nabla ^r
[\psi(f-P_f)]}{\tilde g}\right\| _{L_{\tilde
p}(B_{3R_0/4}(x_*))}\right) \underset{\tilde
p,r,d,\psi_0}{\lesssim}
$$
$$
\lesssim \|\tilde g\tilde v\| _{L_{\tilde \varkappa}(\tilde
\Omega)}\left(2\left\|\frac{\nabla ^r f}{\tilde g}\right\|
_{L_{\tilde p}(\tilde \Omega)} + \sum \limits _{k=0}^r
R_0^{k-r}\left\|\frac{\nabla ^k(f-P_f)}{\tilde g}\right\|
_{L_{\tilde p} (B_{\frac{3R_0}{4}}(x_*))}\right)
\stackrel{(\ref{nab_k_fpf}) , (\ref{ap_const})}{\underset{\tilde
p,r,d,c_0}{\lesssim}}\|\tilde g\tilde v\| _{L_{\tilde
\varkappa}(\tilde \Omega)}\left\|\frac{\nabla ^r f}{\tilde
g}\right\| _{L_{\tilde p}(\tilde \Omega)}.
$$

{\bf Step 2.} Let $ f\in C^\infty(\Omega)$,
$f|_{B_{R_0/2}(x_*)}=0$, $\varphi(x)=\frac{|\nabla ^r
f(x)|}{\tilde g(x)}$. From Theorem \ref{reshteor} follows that for
any $x\in \tilde\Omega$ there exists a set $G_x\subset \cup _{t\in
[0, \, T(x)]}B_{b_*t}(\gamma_x(t))$ such that $\{(x, \, y)\in
\tilde\Omega\times \tilde\Omega:\; y\in G_x\}$ is measurable and
$$
|f(x)|\underset{r,d,a}{\lesssim} \int \limits
_{G_x}|x-y|^{r-d}\tilde g(y)\varphi(y)\, dy.
$$
Hence, in order to prove (\ref{fmpf_lqv1}), it is sufficient to
obtain the estimate
\begin{align}
\label{est_co_op} \left(\int \limits_{\tilde\Omega} \tilde
v^{\tilde q}(x)\left(\int \limits_{G_x}|x-y|^{r-d}\tilde
g(y)\varphi(y)\, dy\right)^{\tilde q}\, dx\right)^{1/\tilde q}
\underset{\tilde p, \tilde q,r,d,c_0,a}{\lesssim} \|\tilde g\tilde
v\|_{L_{\tilde \varkappa}(\tilde\Omega)}\|\varphi\|_{L_{\tilde
p}(\tilde\Omega)}.
\end{align}

{\bf Step 3.} Let $\frac{1}{\tilde \varkappa}>0$. Denote by $w_*$
the minimal vertex of the tree $\tilde {\cal T}$. Let ${\cal T}'$,
${\cal T}''$ be subtrees in $\tilde {\cal T}$ rooted at $w_*$, and
let $E'=\tilde\Omega\backslash \Omega_{{\cal T}',F}$,
$E''=\Omega_{{\cal T}'',F}$. Show that (\ref{est_co_op}) holds for
$\tilde g=\chi_{E'}$, $\tilde v=\chi_{E''}$. Actually, $E'\cap
E''=\sqcup _i E_i$, where $E_i=\Omega_{{\cal A}_i,F}$, ${\cal
A}_i\subset {\cal T}$ are subtrees with pairwise incomparable
minimal vertices. From (\ref{le}) follows that for any $x\in
\Omega_{{\cal T}',F}$ the inclusion $G_x\subset \Omega_{{\cal
T}',F}$ holds and $G_x\cap E'=\varnothing$, and for any $x\in E_i$
the inclusion $G_x\cap E'\subset E_i$ holds. Hence, the left-hand
side of (\ref{est_co_op}) does not exceed
$$
\left(\sum \limits _i\int \limits_{E_i} \left(\int
\limits_{E_i}|x-y|^{r-d}\varphi(y)\, dy\right)^{\tilde q}\,
dx\right)^{1/\tilde q}=:M.
$$
By Corollary \ref{cor_omega_t}, $E_i\in {\bf FC}(b_*)$, which
implies $({\rm diam}\, E_i)^d\underset{a,d}{\lesssim} {\rm mes}\,
E_i$. Applying Theorem \ref{adams_etc} and the H\"{o}lder
inequality, we get
$$
M\underset{\tilde p,\tilde q,r,d,a}{\lesssim} \left(\sum \limits_i
({\rm mes}\, E_i)^{\frac{\tilde q}{\tilde
\varkappa}}\|\varphi\|^{\tilde q}_{L_{\tilde
p}(E_i)}\right)^{\frac{1}{\tilde q}}\le ({\rm mes}\, (E'\cap
E''))^{1/\tilde \varkappa}\|\varphi\|_{L_{\tilde
p}(\tilde\Omega)}=\|\tilde g\tilde v\|_{L_{\tilde
\varkappa}(\tilde\Omega)}\|\varphi\|_{L_{\tilde p}(\tilde\Omega)}
$$
(for $\tilde p\le \tilde q$ the second inequality follows from
$\left(\sum \limits_i a_i^{\tilde q}\right)^{\frac{1}{\tilde
q}}\le \left(\sum \limits_i a_i^{\tilde p}\right)^{\frac{1}{\tilde
p}}$, and for $\tilde p>\tilde q$ it follows from the H\"{o}lder
inequality and from $\left(\sum \limits_i a_i^{\frac{\tilde
p\tilde q}{\tilde \varkappa(\tilde p-\tilde
q)}}\right)^{\frac{1}{\tilde q}-\frac{1}{\tilde p}}\le \left(\sum
\limits_i a_i\right)^{\frac{1}{\tilde \varkappa}}$).

{\bf Step 4.} Let $\frac{1}{\tilde \varkappa}>0$. Show that there
exist two sequences of subtrees $\{{\cal T}_j'\}_{j\in \Z_+}$ and
$\{{\cal T}''_j\}_{j\in \Z_+}$ in the tree ${\cal T}$ and there
are functions $\hat g$, $\hat v:\Omega\rightarrow \R_+$ with the
following properties:
\begin{enumerate}
\item ${\cal T}'_0={\cal T}''_0=\varnothing$, ${\cal T}'_j
\subset {\cal T}'_{j+1}$, ${\cal T}''_j \subset {\cal T}''_{j+1}$,
$j\in \Z_+$, $\cup_{j\in \Z_+}{\cal T}'_j=\cup_{j\in \Z_+} {\cal
T}''_j={\cal T}$;
\item $\hat g|_{\Omega_{{\cal T}'_j,F}\backslash \Omega
_{{\cal T}'_{j-1},F}}=C'_j$, $\hat v|_{\Omega_{{\cal
T}''_j,F}\backslash \Omega_{{\cal T}''_{j-1},F}}=C''_j$, $j\in
\N$;
\item the sequence $\{C'_j\}_{j\in \N}$ increases and the sequence
$\{C''_j\}_{j\in \N}$ decreases;
\item $\tilde g(x)\underset{a,d,c_0}{\asymp}\hat g(x)$, $\tilde v(x)
\underset{a,d,c_0}{\asymp} \hat v(x)$.
\end{enumerate}
Let us construct the function $\hat g$ (the function $\hat v$ can
be constructed similarly). Let $\Gamma '\subset
\partial \Omega$ be the set from the conditions of Theorem \ref{main}, let ${\cal T}'$
be a subtree in ${\cal T}$, let $w'$ be the minimal vertex of
${\cal T}'$, $m\in \Z$, ${\rm dist}\,(F(w'), \, \Gamma')\in
[2^{-m}, \, 2^{-m+1})$. Denote by ${\cal S}_{{\cal T}'}$ the
maximal tree in the sense of inclusions from the set
of trees ${\cal S}'\subset {\cal T}'$ rooted at $w'$ and
satisfying
\begin{align}
\label{dist_fw} {\rm dist}\, (F(w), \, \Gamma')\ge 2^{-m}, \;\;
w\in {\bf V}({\cal S}').
\end{align}
Show that for any $w\in {\bf V}({\cal S}_{{\cal T}'})$, $x\in
F(w)$ we have
\begin{align}
\label{dist_fw_up} {\rm dist}\, (x, \,
\Gamma')\underset{a,d}{\lesssim} 2^{-m}.
\end{align}
Indeed, choose $x'\in F(w')$ such that ${\rm dist}\, (x', \,
\Gamma')<2^{-m+1}$. Since $\Omega_{{\cal S}_{{\cal T}'},F}\in {\bf
FC}(b_*(a, \, d))$ (see Corollary \ref{cor_omega_t}), then
Definition \ref{fca} yields that
$|x-x'|\underset{a,d}{\lesssim}2^{-{\bf m}(F(w'))}$. By Theorem
\ref{whitney}, $2^{-{\bf m}(F(w'))}\underset{d}{\asymp} {\rm
dist}\, (x', \, \partial \Omega)$. Therefore,
$$
{\rm dist}\, (x, \, \Gamma')\le |x-x'|+{\rm dist}\, (x', \,
\Gamma')\underset{a,d}{\lesssim} {\rm dist}\, (x', \,
\partial \Omega)+2^{-m+1}\le {\rm dist}\, (x', \,
\Gamma')+2^{-m+1}\le 2^{-m+2}.
$$

The trees ${\cal T}'_j$ are constructed by induction on $j\in
\Z_+$. Set ${\cal T}'_0=\varnothing$. Let the trees  ${\cal T}'_i$
be constructed for $i\in \{0, \, \dots, \, j\}$, and let the
numbers $C'_i=\varphi_{\tilde g}(2^{-m_i})$ be defined (see
(\ref{tg_dg})), where $m_i\in \Z$, $i\in \{1, \, \dots, \, j\}$,
$m_1\le \dots \le m_j$. In addition, suppose that ${\cal T}={\cal
T}'_j\sqcup \left(\sqcup _{s=1}^{s_0(j)}{\cal T}'_{j,s}\right)$,
where ${\cal T}'_{j,s}$ are trees rooted at $w_{j,s}$, $s_0(j)\in
\N\cup \{\infty\}$, $w_{j,s}$ are adjacent to some vertices of
${\cal T}'_j$ and
$$
\dist \, (F(w_{j,s}), \, \Gamma')\in [2^{-m_{j,s}}, \,
2^{-m_{j,s}+1}), \;\; m_{j,s}\in \Z, \;\; m_{j,s}\ge m_j+1.
$$
Set $m_{j+1}=\min _{1\le s\le s_0(j)}m_{j,s}$,
$C'_{j+1}=\varphi_{\tilde g}(2^{-m_{j+1}})$, $I_j=\left\{s\in
\overline{1, \, s_0(j)}:\; m_{j,s}=m_{j+1}\right\}$, ${\cal
T}'_{j+1}={\cal T}'_j\cup \left(\cup _{s\in I_j}{\cal S}_{{\cal
T}'_{j,s}}\right)$, $\hat g|_{\Omega_{{\cal T}'_{j+1},F}\backslash
\Omega_{{\cal T}'_j,F}}=C'_{j+1}$. Then the properties 1 and 2
hold by the construction, the property 3 holds since the function
$\varphi_{\tilde g}$ decreases and the sequence $\{m_j\}_{j\in
\N}$ increases. The property 4 follows from (\ref{tg_dg}),
(\ref{delta2}), (\ref{dist_fw}) and (\ref{dist_fw_up}).

{\bf Step 5.} Let us prove (\ref{est_co_op}). If $\frac{1}{\tilde
\varkappa}=0$, then it follows from Theorem \ref{adams_etc}
(remind that by the condition of Theorem \ref{main} in this case
we have $\tilde g=1$ and $\tilde v=1$). Let $\frac{1}{\tilde
\varkappa}>0$. We may assume that $\tilde g=\hat g$, $\tilde
v=\hat v$, where $\hat g$ and $\hat v$ are functions constructed
at step 4. Applying the estimate which is obtained at step 3, we
argue similarly as in the paper \cite{avas2} (see Lemma 5.4 on the
page 487). Notice that in the case $\tilde q=1$ the corresponding
set $\tilde G_y$ is defined as $\tilde G_y=\{x\in \tilde\Omega:\;
y\in G_x\}$. If $y\in F(w)$, $w\in {\bf V}(\tilde{\cal T})$, then
$\tilde G_y\subset \Omega_{\tilde{\cal T}_w,F}$.
\end{proof}
Denote by ${\cal P}_{r-1}(\R^d)$ the space of polynomials on
$\R^d$ of degree not exceeding $r-1$. For a measurable set
$E\subset \R^d$ set ${\cal P}_{r-1}(E)= \{f|_E:\, f\in {\cal
P}_{r-1}(\R^d)\}$.

Let $G\subset \R^d$ be a domain and let $T=\{\Omega
_i\}_{i=1}^{i_0}$ be its finite partition. Denote
\begin{align}
\label{calsrtgsgr} {\cal S}_{r,T}(G)=\{S:G\rightarrow \R:
S|_{\Omega _i}\in {\cal P}_{r-1}(\Omega _i), \; 1\le i \le i_0\};
\end{align}
for $f\in L_{q,v}(G)$ set
\begin{align}
\label{norm_f_pqtv} \| f\| _{p,q,T,v}= \left(\sum \limits
_{i=1}^{i_0} \| f\| _{L_{q,v}(\Omega _i)}^{\sigma_{p,q}}\right)
^{\frac{1}{\sigma_{p,q}}},
\end{align}
where $\sigma _{p,q}=\min \{p, \, q\}$. Denote by $L_{p,q,T,v}(G)$
the space of functions $f\in L_{q,v}(G)$ with the norm $\| \cdot\|
_{p,q,T,v}$. Notice that $\| f\| _{p,q,T,v}\ge \| f\|
_{L_{q,v}(G)}$.

\renewcommand{\proofname}{\bf Proof of Theorem \ref{main}}
\begin{proof}
The lower estimate can be proved similarly as in \cite{vas_mnogo}.
In order to obtain the upper estimate, we shall prove that for any
$\varepsilon>0$ there exists $N(\varepsilon)\in \N$ such that for
any $n\in \N$, $n\ge N(\varepsilon)$, $m\in \Z_+$ there exists a
partition $\hat T_{m,n,\varepsilon}=\hat
T_{m,n,\varepsilon}(\Omega)=\{G^{m,n,\varepsilon}_j\}_{j=1}^{\nu_{m,n}}$
of $\Omega$ with the following properties:
\begin{enumerate}
\item $\nu_{m,n}\underset{d}{\lesssim}2^mn$;
\item for any function $f\in W^r_{p,g}(\Omega)$ there exists a spline
$\hat S_{m,n,\varepsilon}(f)\in {\cal S}_{r,\hat
T_{m,n,\varepsilon}}(\Omega)$ such that
\begin{align}
\label{appr_spl1} \|f-\hat S_{m,n,\varepsilon}(f)\|_{p,q,\hat
T_{m,n,\varepsilon},v}\underset{a,d,p,q,r,c_0,\alpha,\beta}{\lesssim}
(\|gv\|_\varkappa+\varepsilon) (2^mn)^{-\frac rd+\left(\frac
1p-\frac 1q\right)_+},
\end{align}
and the mapping $f\mapsto \hat S_{m,n,\varepsilon}(f)$ is linear;
\item for any $G^{m,n,\varepsilon}_j$
$${\rm card}\, \{i\in \{1, \, \dots, \, \nu_{m\pm 1,n}\}:\; {\rm mes}\, (G^{m,n,\varepsilon}_j
\cap G^{m\pm 1,n,\varepsilon}_i)>0\}\underset{d}{\lesssim}1.$$
\end{enumerate}
Then by repeating arguments from the paper \cite{avas2} (see pages
499--501), we get the desired upper estimate for widths.

{\bf Step 1.} Let us consider the case $\alpha<\infty$ and
$\beta<\infty$ only (if $\alpha=\infty$ or $\beta=\infty$, then
arguments are similar with slight changes in the definition of the
function $\Phi$). Let $\mu_1(E)=\int \limits _E g_0^\alpha(x)\,
dx$, $\mu_2(E)=\int \limits _E v_0^\beta(x)\, dx$. If
$\frac{1}{\tilde \varkappa}:=\frac rd+\frac 1q-\frac
1p-\frac{1}{\alpha}-\frac{1}{\beta}>0$, then we set $l_*=3$,
$\mu_3(E)=\int \limits_E \tilde g^{\tilde \varkappa}(x)\tilde
v^{\tilde \varkappa}(x)\, dx$,
\begin{align}
\label{alphaj} \alpha _1=\frac{\frac{1}{\alpha}}{\frac rd+\frac
1q-\frac 1p}, \;\; \alpha_2= \frac{\frac{1}{\beta}}{\frac rd+\frac
1q-\frac 1p}, \;\; \alpha_3= \frac{\frac rd+\frac 1q-\frac
1p-\frac{1}{\alpha}-\frac{1}{\beta}} {\frac rd+\frac 1q-\frac 1p};
\end{align}
if $\frac rd+\frac 1q-\frac
1p-\frac{1}{\alpha}-\frac{1}{\beta}=0$, then we set $l_*=2$ and
and define $\alpha_1$ and $\alpha_2$ by the formula
(\ref{alphaj}). Define the function $\Phi$ by (\ref{def_phi}). In
addition, set $\frac{1}{\tilde p}=\frac 1p+\frac{1}{\alpha}$,
$\frac{1}{\tilde q}=\frac{1}{q}-\frac{1}{\beta}$. From conditions
of the Theorem follows that $\tilde p>1$ and $\tilde q<\infty$.

{\bf Step 2.} Let $({\cal T}, \, w_*)$ and $F$ be the tree and the
mapping defined in Lemma \ref{constr_omega_t}. For $k\in \N$ we
denote by ${\cal T}_{\le k}$ a subtree in ${\cal T}$ such that
${\bf V}({\cal T}_{\le k})=\{w\in {\bf V}({\cal T}):\; \rho(w_*,
\, w)\le k\}$. Since ${\rm card}\, {\bf V}_1(w)<\infty$ for any
$w\in {\bf V}({\cal T})$, then the set ${\bf V}({\cal T}_{\le k})$
is finite.

Fix $\delta>0$ and choose $k\in \N$ such that
\begin{align}
\label{k_def} \delta_k:=\Phi(\Omega\backslash \Omega _{{\cal
T}_{\le k},F})\le \delta.
\end{align}
Then ${\cal T}={\cal T}_{\le
k}\sqcup\left(\sqcup_{l=1}^{l_0(k)}{\cal T}_{k,l}\right)$, where
${\cal T}_{k,l}$ are trees rooted at $\hat w_{k,l}$, $l_0(k)\in
\N$. Set $\delta_{k,l}=\Phi\left(\Omega_{{\cal
T}_{k,l},F}\right)$.

{\bf Step 3.} If a domain $U$ is a finite union of non-overlapping
cubes\footnote{Here a cube is set that contains an open cube and is
contained in closure of this cube.}, then for sufficiently
large $n$ the partition $\hat T_{m,n,\varepsilon}(U)$ can be constructed
in the same way as in \cite{vas_mnogo}. Define the partition $\hat
T_{m,n,\varepsilon/2;k}=\hat T_{m,n,\varepsilon/2}(\Omega_{{\cal
T}_{\le k},F})$ with properties similar to 1--3. In particular, for
any function $f\in W^r_{p,g}(\Omega)$ there exists a spline $\hat
S_{m,n,\varepsilon/2;k}(f)\in {\cal S}_{r,\hat
T_{m,n,\varepsilon/2;k}}(\Omega_{{\cal T}_{\le k},F})$ such that
\begin{align}
\label{pribl_kon_kub} \|f-\hat
S_{m,n,\varepsilon/2;k}(f)\|_{p,q,\hat
T_{m,n,\varepsilon/2;k},v}\underset{a,d,p,q,r,c_0,\alpha,\beta}{\lesssim}
\left(\|gv\|_\varkappa+\frac{\varepsilon}{2}\right) (2^mn)^{-\frac
rd+\left(\frac 1p-\frac 1q\right)_+},
\end{align}
and the mapping $f\mapsto \hat S_{m,n,\varepsilon/2;k}(f)$ is linear.

{\bf Step 4.} Let $n\ge l_0(k)$. For each $l\in \{1, \, \dots,
\, l_0(k)\}$ we set
\begin{align}
\label{de_nl} n_l=\left\{ \begin{array}{l} \left\lceil
n\frac{\delta_{k,l}}{\delta_k}\right\rceil, \text{ if } \delta_k>0, \\
1, \text{ if }\delta_k=0.
\end{array}\right.
\end{align}
Hence, if $\delta_k=0$, then $\sum \limits _{l=1}^{l_0(k)} n_l
=l_0(k)\le n$, and if $\delta_k>0$, then
\begin{align}
\label{sll1l0k} \sum \limits _{l=1}^{l_0(k)} n_l \le n\sum \limits
_{l=1}^{l_0(k)} \frac{\delta_{k,l}}
{\delta_k}+l_0(k)\stackrel{(\ref{pr1})}{\le} n+l_0(k)\le 2n.
\end{align}

{\bf Step 5.} Prove that for any $l\in \{1, \, \dots, \,
l_0(k)\}$, $n\ge l_0(k)$, $m\in \Z_+$ there exists a partition
$T_{m,n}^l=\{G^{m,n}_{j,l}\}_{j=1}^{\nu_{m,n,l}}$ of
$\Omega_{{\cal T}_{k,l},F}$ with the following properties: 1.
$\nu_{m,n,l}\underset{d}{\lesssim} 2^mn_l$, 2. for any function
$f\in W^r_{p,g}(\Omega)$ there exists a spline $S_{m,n,l}(f)\in {\cal
S}_{r, T_{m,n}^l}(\Omega_{{\cal T}_{k,l},F})$ such that
\begin{align}
\label{loc_est}
\|f-S_{m,n,l}(f)\|_{p,q,T_{m,n}^l,v}\underset{p,q,r,d,\alpha,\beta,a,c_0}{\lesssim}
\left(\frac{\delta}{2^mn}\right)^{\frac rd+\frac 1q-\frac
1p}\left(\sum \limits _{E\in T_{m,n}^l} \left\|\frac{\nabla ^r
f}{g}\right\|_{L_p(E)}^{\sigma_{p,q}}\right)^{\frac{1}{\sigma_{p,q}}},
\end{align}
and the mapping $f\mapsto S_{m,n,l}(f)$ is linear, 3. for any $G^{m,n}_{j,l}$
\begin{align}
\label{cardi1} {\rm card}\, \{i\in \{1, \, \dots, \, \nu_{m\pm
1,n,l}\}:\; {\rm mes}\, (G^{m,n}_{j,l} \cap G^{m\pm
1,n}_{i,l})>0\}\underset{d}{\lesssim}1.
\end{align}

Let ${\cal B}_{n_l,m}=\{E^{m,n_l}_{j,l}\}_{j=1}
^{\tilde\nu_{m,n,l}}$ be the partition of $\Omega_{{\cal
T}_{k,l},F}$ defined in Lemma \ref{approx}. From item 1 follows
that $\tilde\nu_{m,n,l}\underset{d}{\lesssim} 2^mn_l$. By item 2,
a), either $E^{m,n_l}_{j,l}\subset F(w)$ (with the strict
inclusion) and $E^{m,n_l}_{j,l}\in {\cal R}\cup \Xi(F(w))$ for
some $w=w_{j,l}^{m,n_l}\in {\bf V}({\cal T}_{k,l})$ (the set of
such $j$ will be denoted by $J^1_{m,n,l}$), or
$E^{m,n_l}_{j,l}=\Omega_{{\cal T}^{m,n_l}_{j,l},F}$ for some
subtree ${\cal T}^{m,n_l}_{j,l}\subset {\cal T}_{k,l}$ (the set of
such $j$ will be denoted by $J^2_{m,n,l}$). From item 2, b)
follows that
\begin{align}
\label{phigejmn} \Phi(E^{m,n_l}_{j,l})\underset{d}{\lesssim}
\frac{\Phi(\Omega_{{\cal T}_{k,l},F})}{2^mn_l}
\stackrel{(\ref{de_nl})}{\le} \frac{\delta_k}{2^mn}
\stackrel{(\ref{k_def})}{\le} \frac{\delta}{2^mn}.
\end{align}

Let $j\in J^1_{m,n,l}$. Then Theorem \ref{whitney}
together with (\ref{tg_dg}) and (\ref{delta2}) imply that
for any $x$, $y\in E_{j,l}^{m,n_l}$ we have $\frac{\tilde g(x)}{\tilde
g(y)}\underset{c_0,d}{\asymp}1$ and $\frac{\tilde v(x)}{\tilde
v(y)}\underset{c_0,d}{\asymp}1$. Hence, there exists a partition
$\Pi_{j,l}$ of the set $E_{j,l}^{m,n_l}$ into at most $2d$
measurable subsets with the following property: for any function
$f\in W^r_{p,g}(\Omega)$ there exists a spline $S_{j,l}(f)\in {\cal
S}_{r,\Pi_{j,l}}(E^{m,n_l}_{j,l})$ such that
\begin{align}
\label{2d}
\|f-S_{j,l}(f)\|_{p,q,\Pi_{j,l},v}\underset{p,q,r,d,\alpha,\beta,c_0}{\lesssim}
\Phi(E_{j,l}^{m,n_l})^{\frac{1}{\varkappa}} \left\|\frac{\nabla
^rf}{g}\right\|_{L_p(E_{j,l}^{m,n_l})}
\stackrel{(\ref{phigejmn})}{\underset{d,\varkappa}{\lesssim}}
\left(\frac{\delta}{2^mn}\right)^{\frac{1}{\varkappa}}
\left\|\frac{\nabla ^rf}{g}\right\|_{L_p(E_{j,l}^{m,n_l})}
\end{align}
(see the beginning of the proof of Theorem 3 in \cite{vas_mnogo}).

Let $j\in J^2_{m,n,l}$. By Lemma \ref{emb_mon}, there exists
a polynomial $P_{j,l}(f)$ of degree not exceeding $r-1$ such that
$$
\|f-P_{j,l}(f)\|_{L_{\tilde q,\tilde v}(E_{j,l}^{m,n_l})}
\underset{\tilde p,\tilde q,r,d,a,c_0}{\lesssim} \|\tilde g\tilde
v\|_{L_{\tilde \varkappa}(E_{j,l}^{m,n_l})}\left\|\frac{\nabla ^r
f}{\tilde g}\right\|_{L_{\tilde p}(E_{j,l}^{m,n_l})}.
$$
This together with the H\"{o}lder inequality yields
\begin{align}
\label{g_appr}
\begin{array}{c}
\|f-P_{j,l}(f)\|_{L_{q,v}(E_{j,l}^{m,n_l})}
\underset{p,q,\alpha,\beta,r,d,a,c_0}{\lesssim}
\|v_0\|_{L_\beta(E_{j,l}^{m,n_l})}\|\tilde g\tilde v\|_{L_{\tilde
\varkappa}(E_{j,l}^{m,n_l})}\|g_0\|_{L_\alpha(E_{j,l}^{m,n_l})}\left\|\frac{\nabla
^r f}{g}\right\|_{L_p(E_{j,l}^{m,n_l})}\stackrel{(\ref{alphaj})}{=}\\
=\Phi(E_{j,l}^{m,n_l})^{\frac rd+\frac 1q-\frac 1p}
\left\|\frac{\nabla ^r f}{g}\right\|_{L_p(E_{j,l}^{m,n_l})}
\stackrel{(\ref{phigejmn})}{\underset{d,\varkappa} {\lesssim}}
\left(\frac{\delta}{2^mn}\right)^{\frac rd+\frac 1q-\frac 1p}
\left\|\frac{\nabla ^rf}{g}\right\| _{L_p(E_{j,l}^{m,n_l})}.
\end{array}
\end{align}

Set $T_{m,n}^l=\left(\cup _{j\in J^1_{m,n,l}}\Pi_{j,l}\right) \cup
\{E_{j,l}^{m,n_l}\}_{j\in J^2_{m,n,l}}$,
$S_{m,n,l}(f)|_{E_{j,l}^{m,n_l}} =S_{j,l}(f)$, $j\in J^1_{m,n,l}$,
$S_{m,n,l}(f)|_{E_{j,l}^{m,n_l}}= P_{j,l}(f)$, $j\in J^2_{m,n,l}$.
Then the property 1 and (\ref{cardi1}) follow from items 1 and 3
of Lemma \ref{approx}; (\ref{2d}) and (\ref{g_appr}) imply
(\ref{loc_est}).

{\bf Step 6.} Put $\hat T_{m,n,\varepsilon}=\hat
T_{m,n,\varepsilon/2;k}\cup
\left(\cup_{l=1}^{l_0(k)}T^l_{m,n}\right)$,
$\hat S_{m,n,\varepsilon}(f)|_{\Omega_{{\cal T}_{\le k},F}}
=\hat S_{m,n,\varepsilon/2;k}(f)$, $\hat S_{m,n,\varepsilon}(f)|
_{\Omega_{{\cal T}_{k,l},F}}=S_{m,n,l}(f)$. The property 1 follows
from the estimate
\begin{align}
\label{aaaa} {\rm card}\, \hat T_{m,n,\varepsilon}={\rm
card}\,\hat T_{m,n,\varepsilon/2;k}+\sum \limits
_{l=1}^{l_0(k)}\nu_{m,n,l}\underset{d}{\lesssim} 2^mn+\sum \limits
_{l=1}^{l_0(k)}2^mn_l\stackrel{(\ref{sll1l0k})}{\underset{d}{\lesssim}}
2^mn.
\end{align}
The inequality (\ref{appr_spl1})
follows from (\ref{pribl_kon_kub}), (\ref{loc_est}) with
sufficiently small $\delta>0$, (\ref{aaaa}) and
the H\"{o}lder inequality. The property 3 of the partition $\hat
T_{m,n,\varepsilon}$ follows from the property 3 of the partitions $\hat
T_{m,n,\varepsilon/2;k}$ and $T^l_{m,n}$.
\end{proof}
In conclusion, the author expresses her sincere gratitude to
A.S. Kochurov for reading the manuscript and to O.V. Besov
for remarks on background.

\begin{Biblio}
\bibitem{besov_il1} O.V. Besov, V.P. Il'in, S.M. Nikol'skii,
{\it Integral representations of functions, and embedding theorems} (Russian).
Second edition. Fizmatlit ``Nauka'', Moscow, 1996. 480 pp.
\bibitem{resh1} Yu.G. Reshetnyak, ``Integral representations of
differentiable functions in domains with a nonsmooth boundary'',
{\it Sibirsk. Mat. Zh.}, {\bf 21}:6 (1980), 108--116 (Russian).
\bibitem{resh2} Yu.G. Reshetnyak, ``A remark on integral representations
of differentiable functions of several variables'', {\it Sibirsk. Mat. Zh.},
{\bf 25}:5 (1984), 198--200 (Russian).
\bibitem{adams} D.R. Adams, ``Traces of Potentials. II'', {\it Indiana Univ.
Math. J.}, {\bf 22} (1972/73), 907–918.
\bibitem{adams1} D.R. Adams, ``A Trace
Inequality for Generalized Potentials'', {\it Studia Math.} {\bf
48} (1973), 99–105.
\bibitem{kufner} A. Kufner, {\it Weighted Sobolev spaces}. Teubner-Texte Math., 31.
Leipzig: Teubner, 1980.
\bibitem{triebel} H. Triebel, {\it Interpolation Theory, Function Spaces,
Differential Operators} (North-Holland Mathematical Library, 18,
North-Holland Publishing Co., Amsterdam–New York, 1978; Mir, Moscow, 1980).
\bibitem{turesson} B.O. Turesson, {\it Nonlinear Potential Theory and Weighted Sobolev Spaces}.
Lecture Notes in Mathematics, 1736. Springer, 2000.
\bibitem{edm_trieb_book} D.E. Edmunds, H. Triebel, {\it Function Spaces,
Entropy Numbers, Differential Operators}. Cambridge Tracts in
Mathematics, {\bf 120} (1996). Cambridge University Press.
\bibitem{triebel1} H. Triebel, {\it Theory of Function Spaces III}. Birkh\"{a}user Verlag, Basel, 2006.
\bibitem{edm_ev_book} D.E. Edmunds, W.D. Evans, {\it Hardy Operators, Function Spaces and Embeddings}.
Springer-Verlag, Berlin, 2004.
\bibitem{kudr_nik} L.D. Kudryavtsev and S.M. Nikol’skii, ``Spaces of Differentiable
Functions of Several Variables and Embedding Theorems'', Current problems in mathematics.
Fundamental directions 26, 5–-157 (1988) [Akad. Nauk SSSR, Vsesoyuz. Inst. Nauchn.
i Tekhn. Inform., Moscow, 1988, in Russian].
\bibitem{kudrjavcev} L.D. Kudryavtsev, ``Direct and Inverse Imbedding Theorems.
Applications to the Solution of Elliptic Equations by Variational Methods'',
{\it Tr. Mat. Inst. Steklova}, {\bf 55} (1959), 3--182 [Russian].
\bibitem{liz_otel} P.I. Lizorkin and M. Otelbaev, ``Imbedding and Compactness
Theorems for Sobolev-Type Spaces with Weights. I, II'', {\it Mat. Sb.}, {\bf 108}:3
(1979), 358-–377; {\bf 112}:1 (1980), 56-–85 [{\it Math. USSR-Sb.} {\bf 40}:1,
(1981) 51-–77].
\bibitem{gur_opic} P. Gurka, B. Opic, ``Continuous and compact imbeddings of weighted Sobolev
spaces. I, II, III'', {\it Czech. Math. J.} {\bf 38(113)}:4
(1988), 730--744; {\bf 39(114)}:1 (1989), 78--94; {\bf 41(116)}:2
(1991), 317--341.
\bibitem{besov1} O.V. Besov, ``On the Compactness of Embeddings of Weighted Sobolev Spaces
on a Domain with an Irregular Boundary'', {\it Tr. Mat. Inst. Steklova}, {\bf 232} (2001),
72-–93 [{\it Proc. Steklov Inst. Math.}, {\bf 1} ({\bf 232}), 66-–87 (2001)].
\bibitem{besov2} O.V. Besov, ``Sobolev’s Embedding Theorem for a Domain with an
Irregular Boundary'', {\it Mat. Sb.} {\bf 192}:3 (2001), 3-–26  [{\it Sb. Math.}
{\bf 192}:3-4 (2001), 323-–346].
\bibitem{besov3} O.V. Besov, ``On the Compactness of Embeddings of Weighted
Sobolev Spaces on a Domain with an Irregular Boundary'', {\it Dokl. Akad. Nauk}
{\bf 376}:6 (2001), 727-–732 (Russian).
\bibitem{besov4} O.V. Besov, ``Integral Estimates for Differentiable Functions
on Irregular Domains'', {\it Mat. Sb.} {\bf 201}:12 (2010), 69-–82
[{\it Sb. Math.} {\bf 201}:12 (2010), 1777-–1790].
\bibitem{antoci} F. Antoci, ``Some necessary and some sufficient conditions for the compactness
of the embedding of weighted Sobolev spaces'', {\it Ricerche Mat.}
{\bf 52}:1 (2003), 55--71.
\bibitem{gold_ukhl} V. Gol'dshtein, A. Ukhlov, ``Weighted Sobolev spaces and embedding theorems'',
{\it Trans. AMS}, {\bf 361}:7 (2009), 3829–3850.
\bibitem{edm_ev_06} D.E. Edmunds, W.D. Evans, ``Spectral problems on arbitrary open subsets
of $\R^n$ involving the distance to the boundary'', {\it Journal
of Computational and Applied Mathematics}, {\bf 194}:1 (2006),
36–53.
\bibitem{heinr} S. Heinrich,
``On the relation between linear $n$-widths and approximation
numbers'', {\it J. Approx. Theory}, {\bf 58}:3 (1989), 315–333.
\bibitem{bibl6} V.M. Tikhomirov, ``Diameters of Sets in Functional Spaces
and the Theory of Best Approximations'', {\it Russian Math. Surveys},
{\bf 15}:3 (1960), 75--111.
\bibitem{tikh_babaj} V.M. Tikhomirov and S.B. Babadzanov, ``Diameters of a
Function Class in an $L^p$-space $(p\ge 1)$'', {\it Izv. Akad. Nauk UzSSR, Ser.
Fiz. Mat. Nauk}, {\bf 11}(2) (1967), 24--30 (in Russian).
\bibitem{busl_tikh} A.P. Buslaev and V.M. Tikhomirov,
``The Spectra of Nonlinear Differential Equations and Widths of Sobolev Classes'',
{\it Math. USSR-Sb.}, {\bf 71}:2 (1992), 427–-446.
\bibitem{bib_ismag} R.S. Ismagilov, ``Diameters of Sets in Normed Linear Spaces,
and the Approximation of Functions by Trigonometric Polynomials'',
{\it Russ. Math. Surv.}, {\bf 29}:3 (1974), 169-–186.
\bibitem{bib_kashin} B.S. Kashin, ``The Widths of Certain Finite-Dimensional
Sets and Classes of Smooth Functions'', {\it Math. USSR-Izv.}, {\bf
11}:2 (1977), 317–-333.
\bibitem{bib_majorov} V.E. Maiorov, ``Discretization of the Problem of Diameters'',
{\it Uspekhi Mat. Nauk}, {\bf 30}:6 (1975), 179--180.
\bibitem{bib_makovoz} Yu.I. Makovoz, ``A Certain Method of Obtaining
Lower Estimates for Diameters of Sets in Banach
Spaces'', {\it Math. USSR-Sb.}, {\bf 16}:1
(1972), 139--146.
\bibitem{birm} M.Sh. Birman and M.Z. Solomyak, ``Piecewise Polynomial Approximations of Functions of Classes
$W^\alpha_p$'', {\it Mat. Sb.} {\bf 73}:3 (1967), 331-–355 .
\bibitem{bibl9} V.N. Temlyakov,  ``Approximation of Periodic Functions
of Several Variables With Bounded Mixed Derivative'', {\it Dokl. Akad. Nauk SSSR},
{\it 253}:3 (1980), 544--548.
\bibitem{bibl10} V.N. Temlyakov,  ``Diameters of Some Classes of Functions
of Several Variables'', {\it Dokl. Akad. Nauk SSSR}, {\bf 267}:3 (1982), 314--317.
\bibitem{bibl11} V.N. Temlyakov,  ``Approximation of Functions With Bounded Mixed Difference by Trigonometric
Polynomials, and Diameters of Certain Classes of Functions'',
{\it Math. USSR-Izv.}, {\bf 20}:1 (1983), 173-–187.
\bibitem{bibl12} E.M. Galeev, ``Approximation of Certain Classes of Periodic Functions of Several Variables by Fourier
Sums in the $\widetilde L_p$ Metric'', {\it Uspekhi Mat. Nauk}, {\bf 32}:4 (1977), 251--252
(in Russian).
\bibitem{bibl13} E.M. Galeev, ``The Approximation of Classes of Functions
With Several Bounded Derivatives by
Fourier Sums'', {\it Math. Notes}, {\bf
23}:2 (1978), 109--117.
\bibitem{kashin1} B.S. Kashin, ``Widths of Sobolev Classes of Small-Order Smoothness'',
{\it Moscow Univ. Math. Bull.}, {\bf 36}:5 (1981), 62--66.
\bibitem{kulanin} E.D. Kulanin, {\it Estimates for Diameters of Sobolev Classes of
Small-Order Smoothness}. Thesis. Candidate
Fiz.-Math. Sciences (MGU, Moscow, 1986) (in Russian).
\bibitem{vybiral} J. Vybiral, ``Widths of embeddings in function spaces'', {\it Journal of Complexity},
{\bf 24} (2008), 545--570.
\bibitem{tikh_nvtp} V.M. Tikhomirov, {\it Some Questions in Approximation Theory}.
(Izdat. Moskov. Univ., Moscow, 1976) (in Russian).
\bibitem{itogi_nt} V.M. Tikhomirov, ``Approximation Theory''. In: {\it Current problems in
mathematics. Fundamental directions.}
vol. 14. ({\it Itogi Nauki i Tekhniki}) (Akad. Nauk SSSR, Vsesoyuz. Inst. Nauchn. i Tekhn. Inform.,
Moscow, 1987), pp. 103–260 (in Russian).
\bibitem{kniga_pinkusa} A. Pinkus {\it $n$-widths in approximation theory.} Berlin: Springer, 1985.
\bibitem{pietsch1} A. Pietsch, ``$s$-numbers of operators in Banach space'', {\it Studia Math.},
{\bf 51} (1974), 201--223.
\bibitem{stesin} M.I. Stesin, ``Aleksandrov Diameters of Finite-Dimensional Sets
and of Classes of Smooth Functions'', {\it Dokl. Akad. Nauk SSSR}, {\bf 220}:6 (1975),
1278--1281 (in Russian).
\bibitem{bib_gluskin} E.D. Gluskin, ``Norms of Random Matrices and Diameters
of Finite-Dimensional Sets'', {\it Math. USSR-Sb.}, {\bf 48}:1
(1984), 173--182.
\bibitem{garn_glus} A.Yu. Garnaev and E.D. Gluskin, ``The Widths of a Euclidean Ball'',
{\it Soviet Math. Dokl.}, {\bf 30}:1 (1984), 200-–204.
\bibitem{har_ev93} W.D. Evans, D.J. Harris, ``Fractals, trees and the Neumann
Laplacian'', {\it Math. Ann.} {\bf 296}:3 (1993), 493--527.
\bibitem{e_h_l} W.D. Evans, D.J. Harris, J. Lang, ``The approximation numbers
of Hardy-type operators on trees'', {\it Proc. London Math. Soc.},
({\bf 3}) {\bf 83}:2 (2001), 390--418.
\bibitem{solomyak} M. Solomyak, ``On approximation of functions from Sobolev spaces on metric
graphs'', {\it J. Approx. Theory}, {\bf 121}:2 (2003), 199--219.
\bibitem{leoni} G. Leoni, {\it A first Course in Sobolev Spaces}. Graduate studies
in Mathematics, vol. 105. AMS, Providence, Rhode Island, 2009.
\bibitem{vas_mnogo} A.A. Vasil’eva, ``Kolmogorov Widths of Weighted Sobolev
Classes on a Cube'', {\it Trudy Inst. Mat. i
Mekh. UrO RAN} {\bf 16}:4 (2010), 100–-116.
\bibitem{cohen_devore} A. Cohen, R. DeVore, P. Petrushev, Hong Xu,
``Nonlinear approximation and the space $BV(\R^2)$'', {\it Amer.
J. Math.}, {\bf 121}:3, (1999), 587--628.
\bibitem{sobolev1} S.L. Sobolev, {\it Some applications of functional analysis
in mathematical physics}. Izdat. Leningrad. Gos. Univ., Leningrad, 1950.
\bibitem{mazya1} V.G. Maz’ja [Maz’ya], {\it Sobolev Spaces} (Leningrad. Univ.,
Leningrad, 1985; Springer-Verlag, Berlin–New York, 1985).
\bibitem{avas2} A.A. Vasil'eva, ``Kolmogorov Widths of Weighted
Sobolev Classes on a Domain for a Special Class of Weights. II'',
{\it Russian Journal of Mathematical Physics}, {\bf 18}:4,
465--504.
\end{Biblio}
\end{document}